\theoremstyle{plain}
\newtheorem{Th}{Theorem}[section]
\newtheorem*{Th*}{Theorem}
\newaliascnt{Lemma}{Th}
\newtheorem{Lemma}[Lemma]{Lemma}
\newtheorem*{Lemma*}{Lemma}
\newaliascnt{Prop}{Th}
\newtheorem{Prop}[Prop]{Proposition}
\newtheorem*{Prop*}{Proposition}
\newaliascnt{Cor}{Th}
\newtheorem{Cor}[Cor]{Corollary}
\newtheorem*{Cor*}{Corollary}
\theoremstyle{definition}
\newaliascnt{Def}{Th}
\newtheorem*{Def*}{Definition}
\newaliascnt{Ex}{Th}
\newtheorem*{Ex*}{Example}
\newaliascnt{Conj}{Th}
\newtheorem*{Conj*}{Conjecture}
\theoremstyle{remark}
\newaliascnt{Rem}{Th}
\newtheorem{Rem}[Rem]{Remark}
\newtheorem*{Rem*}{Remark}
\let\temp\phi
\let\phi\varphi
\let\varphi\temp
\let\temp\epsilon
\let\epsilon\varepsilon
\let\varepsilon\temp
\let\subset\subseteq
\renewcommand{\hat}{\widehat}
\DeclarePairedDelimiter{\abs}{\lvert}{\rvert}
\DeclarePairedDelimiter{\ang}{\langle}{\rangle}
\DeclarePairedDelimiterX{\ip}[2]{\langle}{\rangle}{#1,#2}
\DeclarePairedDelimiter{\bra}{[}{]}
\DeclarePairedDelimiter{\norm}{\lVert}{\rVert}
\DeclarePairedDelimiter{\p}{(}{)}
\DeclarePairedDelimiter{\set}{\{}{\}}
\newcommand{\R}{\mathbb{R}}
\newcommand{\ocal}{\mathcal{O}}
\newcommand{\Hh}{\mathbf{H}}
\newcommand{\gtc}[1]{G^{#1}_\tau}
\newcommand{\pmt}{\partial M_{\tau}}
\newcommand{\pdp}{\Pi_\tau D_{\sqrt{\rho}} \Pi_\tau}
\newcommand{\pcl}{\Pi_{\chi, \lambda}}
\newcommand{\Pcl}{P_{\chi, \lambda}}
\newcommand{\Pcm}{P_{\chi, \mu}}
\newcommand{\phitilde}{\tilde{\phi}^{\C}_{\mu}}
\newcommand{\ptau}{\phi_\tau}
\newcommand{\tl}{\frac{\theta}{\lambda}}
\newcommand{\phil}{\frac{\varphi}{\lambda}}
\newcommand{\vl}{\frac{v}{\sqrt{\lambda}}}
\newcommand{\ul}{\frac{u}{\sqrt{\lambda}}}
\newcommand{\quaddifferential}{ d\sigma_1 d\sigma_2 d\mu_{\tau}(w) dt}
\newcommand{\Pil}{\Pi_{[\lambda, \lambda + 1]}}
\DeclareMathOperator{\Imp}{Im}
\DeclareMathOperator{\Rep}{Re}
\DeclareMathOperator{\supp}{supp}
\DeclareMathOperator{\vol}{vol}
\newcommand{\dtilde}[1]{\tilde{\raisebox{0pt}[0.9\height]{$\tilde{#1}$}}}
\let\C\relax
\newcommand{\C}{\mathbb{C}} 
\begin{document}

\title[]{Szeg\H{o} kernel asymptotics and concentration of Husimi Distributions of eigenfunctions}

\author{Robert Chang}
\address[Robert Chang]{Northeastern University, Boston, MA 02115}
\email{hs.chang@northeastern.edu}

\author{Abraham Rabinowitz}
\address[Abraham Rabinowitz]{Northwestern University, Evanston, IL 60208, USA}
\email{arabin@math.northwestern.edu}

\begin{abstract}
We work on the boundary $\partial M_\tau$ of a Grauert tube of a closed, real analytic Riemannian manifold \(M\). The Toeplitz operator \(\pdp\)  associated to the Reeb vector field is a positive, self-adjoint, elliptic operator on $H^2(\pmt)$. We compute $\lambda \to \infty$ asymptotics under parabolic rescaling in a neighborhood of the geodesic (Reeb) flow $\gtc{t} = \exp t\Xi_{\sqrt{\rho}}$ for the spectral projection kernel $\pcl$ associated to \(\pdp\). We also compute scaling asymptotics for tempered sums of Husimi distributions (analytic continuations) on $\pmt$ of Laplace eigenfunctions on \(M\). Both asymptotic formul\ae can be expressed in terms of the metaplectic representation of the linearization of the geodesic flow $\gtc{t}$ on Bargmann--Fock space. As a corollary, we obtain sharp \(L^p \to L^{q}\) norm estimates for $\pcl$ and sharp $L^p$ estimates for Husimi distributions.
\end{abstract}

\maketitle

\tableofcontents

\section{Statement of main results}

The main purpose of this article is to study the $L^p \to L^q$ mapping norms of the spectral projections $\pcl$ \eqref{eqn:PCL} and $\Pi_{[\lambda,\lambda+1]}$ \eqref{eqn:SHORTWINDOW} associated to the Szeg\H{o} projector $\Pi_\tau$ \eqref{eqn:SZEGOPROJ} on the boundary $\pmt$ a of Grauert tube $M_\tau$. These norm estimates, which are sharp, are stated in \autoref{theo:MAINTHEO} and \autoref{Short window}. A key ingredient of the proof is the on-shell off-diagonal scaling asymptotics of $\pcl$ on $\pmt$ (\autoref{Graph Scaling Theorem}). As applications, we deduce sharp $L^p$ estimates for analytic continuations \eqref{eqn:HUSIMI} of Laplace eigenfunctions (\autoref{theo:MAINTHEO}), as well as for eigenfunctions of the Toeplitz operator \eqref{eqn:ELJ}, whose principal symbol coincides with that of $\sqrt{-\Delta}$ transported to the Grauert tube boundary (\autoref{prop:Approx Reproduce}). Unlike the Sogge estimates in the real domain, there is no `critical exponent' $p$ separating low and high $L^p$ norms.

We now give a terse introduction to and the precise statements of our results, postponing  to \autoref{sec:COMP} a more detailed discussion of related works. The setup involves a closed, real analytic manifold  \((M,g)\) of dimension \(m \geq 2\). Its complexification \(M_\C\) admits a strictly plurisubharmonic exhaustion function \(\rho\) in a neighborhood of the totally real submanifold \(M \subset M_\C\). For each $0 < \tau < \tau_{\mathrm{max}}$, the sublevel set \(\{\sqrt{\rho} < \tau\} =: M_\tau\) is a K\"{a}hler manifold, called the \emph{Grauert tube of radius $\tau$}.

Throughout, we work on the boundary of a Grauert tube with a fixed radius $\tau$. The \emph{Szeg\H{o} projector}  associated the Grauert tube boundary
\begin{equation}\label{eqn:SZEGOPROJ}
\Pi_\tau \colon L^2(\partial M_\tau) \to H^2(\partial M_\tau)
\end{equation}
is the orthogonal projection onto the Hardy space of boundary values of holomorphic functions in the tube. Consider the Toeplitz operator
\begin{equation}\label{eqn:TOEPLITZ}
\Pi_\tau D_{\sqrt{\rho}} \Pi_\tau \colon H^2(\partial M_\tau) \to H^2(\partial M_\tau),
\end{equation}
where $D_{\sqrt{\rho}} = \frac{1}{i} \Xi_{\sqrt{\rho}}$ is a constant multiple of the Hamilton vector field of the Grauert tube function $\sqrt{\rho}$ acting as a differential operator. As in \cite{ChangRabinowitz21}, we fix a positive, even Schwartz function $\chi$ whose Fourier transform is compactly supported with \( \widehat{\chi}(0) = 1\) and form the spectral localization
\begin{equation}\label{eqn:PCL}
\Pi_{\chi,\lambda} =  \int_\R \widehat{\chi}(t) e^{-it\lambda} \Pi_\tau e^{it \Pi_\tau D_{\sqrt{\rho}}\Pi_\tau}\,dt.
\end{equation}
The classical dynamics associated to \(e^{i t \Pi_\tau D_{\sqrt{\rho}} \Pi_\tau}\) on \(\pmt\) is the Hamilton flow
\begin{align}\label{eqn:GTC}
\gtc{t} \colon \pmt \to \pmt, \quad \gtc{t} = \exp t\Xi_{\sqrt{\rho}}.
\end{align}
This flow coincides with the pullback of the Riemannian geodesic flow on $S^*_\tau M$ under the diffeomorphism \eqref{eqn:DIFFEO}.

Our scaling asymptotics for \eqref{eqn:PCL} is stated in Heisenberg coordinates in the sense of \cite{FollandStein} centered at \(p \in \pmt\) and \(\gtc{s}(p)\in \pmt\). In these coordinates, the derivative of the flow \eqref{eqn:GTC} takes the form 
\begin{align}\label{eqn:SYMPMAT}
  D \gtc{s}: T_p\pmt \to T_{\gtc{s}(p)}\pmt, \qquad   D\gtc{s} = \begin{pmatrix} 1 & 0 \\ 0 & M_s \end{pmatrix},
\end{align}
where \(M_s \) is a symplectic matrix on \(\R^{2(m-1)}\). Let \(\hat{\Pi}_{\mathcal{H}, M_s}\) denote the lift to the reduced Heisenberg group $\Hh_\mathrm{red}^{m-1} = S^1 \times \C^{m-1}$ of the metaplectic representation of \(M_s\) acting on the model Bargmann--Fock space \(\mathcal{H}(\C^{m-1})\). See \autoref{sec:METAPLECTIC} for details. The following theorem states that under a parabolic \(\lambda\)-rescaling near \(p\) and \(\gtc{s}(p)\), the kernel of \eqref{eqn:PCL} behaves like  \( \lambda^{m-1}\hat{\Pi}_{\mathcal{H},M_s}\) to leading order as \(\lambda \to \infty\).

%
%
%
%
%

%
%
%

\begin{Th}[On-shell scaling asymptotics for $\pcl$]\label{Graph Scaling Theorem}
Let $M$ be a closed, real analytic Riemannian manifold of dimension $m \geq 2$. Let $\pcl$ be as in \eqref{eqn:PCL} and $\gtc{t}$ be as in \eqref{eqn:GTC}. Fix \(p \in \pmt\) and \(s \in \supp \hat{\chi}\). Let $(\theta, u)$ and $(\varphi, v)$ be Heisenberg coordinates centered at $p$ and \(\gtc{s}(p)\), respectively. Then, we have
\begin{align}
\Pi_{\chi, \lambda}&\left( p +  \left(\frac{\theta}{\lambda}, \frac{u}{\sqrt{\lambda}}\right);\gtc{s}(p) + \left( \frac{\varphi}{\lambda}, \frac{v}{\sqrt{\lambda} }\right) \right) \\
&=  \frac{C_{m,M}}{\tau^{m}}e^{i s \lambda} \lambda^{m-1}\hat{\Pi}_{\mathcal{H},M_s}\left(\frac{\theta}{2 \tau}, \frac{u}{\sqrt{\tau} }, \frac{\varphi}{2 \tau}, \frac{v}{\sqrt{\tau} }\right)  \bra*{1 + \sum_{j = 1}^{N}\lambda^{ - \frac{j}{2}}P_j(p,s,u,v,\theta,\varphi)}\\
& \quad +  \lambda^{^{ - \frac{N + 1}{2}}} R_N\left(p,s, \theta, u , \varphi, v ,\lambda \right),
\end{align}
where \(P_j\) is a polynomial in \(\theta,u, \varphi, v\), the remainder $R_N$ satisfies
\begin{align}
\norm{R_{N}( p,s, \theta, u, \varphi, v, \lambda )}_{C^{j}\left( \{|(\theta,u)| + |\left( \varphi,v \right)| \leq \rho\}  \right)} \leq C_{N,j,\rho} \quad \text{for } \rho > 0 , \ j = 1,2,3, \dotsc ,
\end{align}
and all quantities vary smoothly with \(p\) and \(s\).
\end{Th}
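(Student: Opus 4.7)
The starting point is the parametrix of Boutet de Monvel--Sjöstrand for the Szegő projector,
\begin{equation}
\Pi_\tau(x,y) = \int_0^\infty e^{i\sigma \psi(x,y)} a(x,y,\sigma)\, d\sigma \pmod{C^\infty},
\end{equation}
where $\psi$ is the almost-analytic extension off the diagonal with $\Imp \psi \geq 0$, combined with the fact that $e^{it \pdp}$ is a Toeplitz Fourier integral operator quantizing the flow $\gtc{t}$. Composing these yields an oscillatory integral representation
\begin{equation}
\Pi_\tau e^{i t\, \pdp}(x,y) = \int_0^\infty e^{i\sigma \psi(x,\,\gtc{-t}(y))} A(t,x,y,\sigma)\, d\sigma,
\end{equation}
modulo smoothing, with a classical symbol $A$ of order $m-1$. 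Inserting this into \eqref{eqn:PCL} turns $\pcl(x,y)$ into a double oscillatory integral in $(t,\sigma)$ with phase $\Phi = \sigma \psi(x,\gtc{-t}(y)) - t\lambda$ and amplitude $\hat{\chi}(t) A$.

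I rescale $\sigma = \lambda \eta$ to put the problem in semiclassical form and apply stationary phase in $(t,\eta)$. The critical set is determined by $\eta\, \partial_t \psi = \lambda$ and $\psi = 0$; on the real set $\Sigma_+$, this pins down $t = s$ precisely when $y$ lies on the forward orbit of $x$ under $\gtc{\cdot}$ at time $s$. Because $\psi$ is a complex phase, one uses the complex stationary phase lemma (Melin--Sjöstrand) to carry out the $(t,\eta)$ integration, producing a factor $e^{is\lambda}$ and an amplitude that depends smoothly on $p$ and $s$.

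Next, I substitute Heisenberg coordinates $(\theta/\lambda, u/\sqrt{\lambda})$ near $p$ and $(\varphi/\lambda, v/\sqrt{\lambda})$ near $\gtc{s}(p)$ and Taylor expand both the remaining phase and amplitude in the parabolic scaling. The linear factor $1/\lambda$ in the $\theta,\varphi$ directions matches the contact distribution, while $1/\sqrt{\lambda}$ in the $u,v$ directions matches the symplectic (CR-normal) directions; this is precisely the scaling under which the model Heisenberg Szegő kernel is invariant. The derivative $D\gtc{s}$ enters only through its block form \eqref{eqn:SYMPMAT}, so to leading order $\gtc{-s}$ pulls back the $(\varphi,v)$ data to $(\varphi, M_s^{-1} v)$ in the tangent model. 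Plugging this into the flat Szegő kernel on $\Hh_\mathrm{red}^{m-1}$, the $M_s$-twist produces exactly the Bargmann--Fock intertwiner of the metaplectic representation $\hat{\Pi}_{\mathcal{H},M_s}$ described in \autoref{sec:METAPLECTIC}. The normalizing constant $C_{m,M}/\tau^m$ and the scale factors $1/(2\tau), 1/\sqrt{\tau}$ arise from the principal symbol of $a$ at the diagonal, the volume normalization of $\pmt$ at radius $\tau$, and the matching of the flat Heisenberg Kähler potential with the local Kähler potential $\sqrt{\rho}$.

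The higher-order terms $P_j$ come from the Taylor expansion of both amplitude and phase beyond leading order, combined with the asymptotic expansion of the complex stationary phase in $(t,\eta)$; each half-power of $\lambda^{-1/2}$ corresponds to adding a derivative in either a $\theta,\varphi$ or a $u,v$ direction. The remainder bounds $\|R_N\|_{C^j}$ follow from standard uniform stationary-phase remainder estimates, using that $\hat{\chi}$ is compactly supported and $p,s$ range over a compact set. The main obstacle, technically speaking, will be executing the complex stationary phase uniformly under the anisotropic parabolic rescaling and checking that the twist by $M_s$ produces exactly the metaplectic intertwiner rather than merely an operator associated to the symplectic lift; this identification is a careful bookkeeping exercise using the explicit Bargmann--Fock realization of the metaplectic representation and the form \eqref{eqn:SYMPMAT} of $D\gtc{s}$ in Heisenberg coordinates.
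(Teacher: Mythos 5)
Your high-level scaffolding — parametrix plus dynamical-Toeplitz relation, parabolic rescaling, stationary phase, Taylor expansion in Heisenberg coordinates — matches the paper's outline. But there is a concrete gap in the central step, the identification of the leading coefficient with the metaplectic kernel, and it traces back to where you eliminate the intermediate Szeg\H{o} variable.

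The paper represents $\Pi_\tau e^{it\pdp}$ as $\Pi_\tau \hat\sigma_{t,\tau}(\gtc{t})^*\Pi_\tau$, keeps \emph{two} Boutet de Monvel--Sj\"ostrand parametrices, and thus works with a five-fold oscillatory integral over $(t,\sigma_1,\sigma_2,w)$. After rescaling and localization, stationary phase is applied only in the degenerate real directions $(\Rep w_0,\sigma_1,\sigma_2,r)$; the remaining integral over $w'\in\C^{m-1}$ is \emph{not} a stationary phase but a genuine Gaussian integral, and it is precisely the formula \eqref{Gaussian Integral}
\begin{equation}
\Pi_{\mathcal H,M}(z,w) = (\det P)^{-1/2}\int_{\C^{m-1}}\Pi_{\mathcal H}(z,\mathcal M v)\,\Pi_{\mathcal H}(v,w)\,dv
\end{equation}
that produces the metaplectic kernel $\hat\Pi_{\mathcal H,M_s}$. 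Your plan first collapses the $w$-integral into a single phase $\psi(x,\gtc{-t}(y))$ and then claims that after Taylor expansion ``to leading order $\gtc{-s}$ pulls back the $(\varphi,v)$ data to $(\varphi,M_s^{-1}v)$'' and that ``plugging this into the flat Szeg\H{o} kernel produces exactly the Bargmann--Fock intertwiner.'' This is false: $\hat\Pi_{\mathcal H,I}(\theta,u;\varphi,M_s^{-1}v)$ is the Schwartz kernel of the \emph{Toeplitz} quantization $f\mapsto\Pi_{\mathcal H}(f\circ M_s)$, which is not unitary unless $M_s\in U(m-1)$; the metaplectic kernel $\hat\Pi_{\mathcal H,M_s}$ carries additional quadratic Gaussian terms $z\bar QP^{-1}z$, $\bar wP^{-1}Q\bar w$ and the normalization $(\det P)^{-1/2}$, none of which appear in the naive substitution. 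The $(\det P_s)^{-1/2}$ in the paper comes from the unitarization symbol $\sigma_{s,\tau,0}$ of $\hat\sigma_{t,\tau}$, and the exponent comes from the $w'$ Gaussian integral \eqref{Gaussian Integral 2}; neither is recovered by your substitution.

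A secondary but related issue is your claimed single-parametrix form $\Pi_\tau e^{it\pdp}(x,y)=\int_0^\infty e^{i\sigma\psi(x,\gtc{-t}(y))}A\,d\sigma$. The flow $\gtc{t}$ is a contactomorphism of $\pmt$ but not a CR biholomorphism, so $\psi(x,\gtc{-t}(y))$ is not antiholomorphic in $y$ and does not have the BdMS structure. You could in principle eliminate $w$ by Melin--Sj\"ostrand and obtain a phase parametrizing the complexified graph canonical relation, but it is not $\psi(x,\gtc{-t}(y))$, and recovering the correct leading quadratic form from that eliminated phase is exactly the ``bookkeeping'' your proposal postpones. The paper sidesteps this entirely by postponing the $w$-integration to the very end, where it is the explicit Gaussian integral \eqref{Gaussian Integral 2} and the identification with $\hat\Pi_{\mathcal H,M_s}$ is immediate by comparison with \eqref{Gaussian Integral}. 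You should restructure your argument to keep the $w'$-integral alive until the stationary phase in the real directions is done, and then recognize \eqref{Gaussian Integral}.
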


\begin{Rem}
  When $s = 0 $ so that $M_s = I$ is the identity matrix,
\begin{align}\label{eqn:PILIFT}
\hat{\Pi}_{\mathcal{H}, I}(\theta, u; \varphi, v) = \frac{1}{\pi^{m-1}} e^{i(\theta-\varphi) + u \cdot \bar{v} - \tfrac{1}{2}\abs{u}^2 - \tfrac{1}{2}\abs{v}^2}
\end{align}
coincides with the Szeg\H{o} kernel of level one on $\Hh_\mathrm{red}^{m-1}$, and we recover near diagonal asymptotics computed in \cite{ChangRabinowitz21} .
\end{Rem}

An argument similar to that found in \cite{ShiffmanZelditch03} allows us to deduce the following sharp $L^p \to L^q$ mapping norm estimate for \eqref{eqn:PCL}.

\begin{Th}[$L^p \to L^q$ mapping estimate for $\pcl$]\label{theo:MAINTHEO}
Let $M$ be a closed, real analytic Riemannian manifold of dimension $m \geq 2$. Let $\pcl$ be as in \eqref{eqn:PCL}.  Then we have the sharp estimate
\begin{align}
  \norm{\pcl f}_{L^{q}(\pmt)} \leq C_{\pmt} \lambda^{(m - 1) (\frac{1}{p} - \frac{1}{q} )} \norm{ f}_{L^{p}( \pmt )} \qquad \left( 2 \leq p,q \leq \infty \right).
\end{align}
\end{Th}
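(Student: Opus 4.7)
The plan is to follow the Shiffman--Zelditch approach of \cite{ShiffmanZelditch03}: extract from \autoref{Graph Scaling Theorem} uniform $L^r$ bounds on the rows of $\pcl(x, \cdot)$, combine these with H\"{o}lder (to get $L^p \to L^\infty$) and Schur (to get $L^p \to L^p$), and interpolate by Riesz--Thorin. The principal range is $2 \leq p \leq q \leq \infty$, where the exponent $(m-1)(1/p - 1/q)$ is nonnegative; the remaining cases in the stated range follow by H\"{o}lder on the finite-measure space $\pmt$ together with the $L^p \to L^p$ bound.

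The key ingredient is the uniform row estimate
\begin{equation}
\sup_{x \in \pmt} \norm{\pcl(x, \cdot)}_{L^r(\pmt)} \leq C \lambda^{(m-1)(1 - 1/r)}, \quad 1 \leq r \leq \infty,
\end{equation}
which I would derive directly from \autoref{Graph Scaling Theorem}. The kernel $\pcl(x,y)$ is negligible unless $y$ lies in a parabolic tube of the orbit $\{\gtc{s}(x) : s \in \supp \hat\chi\}$, and on this tube the theorem gives $|\pcl(x,y)| \leq C \lambda^{m-1} |\hat{\Pi}_{\mathcal{H}, M_s}|$, with Gaussian decay in the $2(m-1)$ transverse directions at the parabolic scale $\lambda^{-1/2}$. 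The transverse integral then contributes $\lambda^{-(m-1)}$, the longitudinal (orbit) integral contributes a bounded constant $|\supp \hat\chi|$, and the $r$-th power of the supremum $\lambda^{m-1}$ combines with these to produce $\lambda^{(m-1)(r-1)}$.

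Given the row bound, H\"{o}lder's inequality gives $|\pcl f(x)| \leq \norm{\pcl(x,\cdot)}_{L^{p'}} \norm{f}_{L^p} \leq C \lambda^{(m-1)/p} \norm{f}_{L^p}$, i.e.\ $\norm{\pcl}_{L^p \to L^\infty} \leq C \lambda^{(m-1)/p}$ for $p \geq 2$. The $r = 1$ row bound and its column analogue (from the approximate self-adjointness of $\pcl$) yield $\norm{\pcl}_{L^p \to L^p} \leq C$ by Schur's test. Riesz--Thorin interpolation at common source $L^p$ between targets $L^p$ (bound $C$) and $L^\infty$ (bound $C \lambda^{(m-1)/p}$), with interpolation parameter $\theta = 1 - p/q$, then produces the desired $\norm{\pcl}_{L^p \to L^q} \leq C \lambda^{(m-1)(1/p - 1/q)}$. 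Sharpness is certified by the coherent states $f_{x_0}(\cdot) := \pcl(\cdot, x_0)$: approximate idempotency $\pcl^2 \approx \pcl$ gives $\pcl f_{x_0} \approx f_{x_0}$, and the row bound yields $\norm{f_{x_0}}_{L^r} \asymp \lambda^{(m-1)(1 - 1/r)}$, so the ratio $\norm{\pcl f_{x_0}}_{L^q}/\norm{f_{x_0}}_{L^p}$ realizes the claimed power.

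The principal obstacle is the rigorous derivation of the row bound: \autoref{Graph Scaling Theorem} supplies the leading-order form of $\pcl$ only in each Heisenberg patch centered at an orbit point $\gtc{s}(x)$. One must patch together these local descriptions as $s$ ranges over $\supp \hat\chi$ (using the smooth dependence in $p$ and $s$ asserted in the theorem for uniformity in $x$), discard the contribution from outside the parabolic tube (where $|\pcl| = O(\lambda^{-\infty})$ by the standard Boutet--de~Monvel / stationary-phase analysis of the Szeg\H{o} projector), and verify that the polynomial correction terms $\lambda^{-j/2} P_j$ and the remainder $R_N$ do not spoil the leading order after integration against the parabolic Jacobian $\lambda^{-m}$.
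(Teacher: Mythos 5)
Your approach is the same as the paper's in all essentials: both rest on a Schur-type inequality applied to the uniform row bound $\sup_z \norm{\pcl(z,\cdot)}_{L^r(\pmt)} \lesssim \lambda^{(m-1)(1-1/r)}$, and both certify sharpness by coherent states $\pcl(\cdot,w)$. The paper invokes the generalized Schur--Young inequality once, with $1/r = 1 - 1/p + 1/q$, where you propose two endpoints plus Riesz--Thorin; these are equivalent routes to the same conclusion.

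You correctly flag, but leave open, the principal technical step, and it is sharper than your sketch suggests. \autoref{Graph Scaling Theorem} controls the remainder only for rescaled coordinates in a fixed compact set $\{\abs{(\theta,u)} + \abs{(\varphi,v)} \le \rho\}$, i.e.\ for $w$ in an $O(\lambda^{-1/2})$-tube of the orbit; but to compute the transverse $L^r$ integral one must integrate against the Gaussian over a region where it actually decays, which means letting $\abs{u}$ grow with $\lambda$. The paper's \autoref{GAUSSIAN} extends the scaling analysis to $\abs{u} \le \lambda^{1/6}$ (a $\lambda^{-1/3}$-tube), tracking the first-order remainder and showing it is $O(\abs{u}\lambda^{-1/2})$ and hence bounded on that range, so the Gaussian factor $e^{-(1-\epsilon)\abs{u}^2/(2\tau)}$ continues to dominate. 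That extended Gaussian decay, together with the $O(\lambda^{-\infty})$ estimate beyond the $\lambda^{-1/3}$-tube, is what produces the transverse factor $\lambda^{-(m-1)}$ in the row bound; your phrase ``verify that the polynomial correction terms do not spoil the leading order'' is precisely this lemma, and it does not follow from \autoref{Graph Scaling Theorem} alone.

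Two smaller points. The identity you need is not ``$\pcl^2 \approx \pcl$''; exactly, $\pcl\bigl(\pcl(\cdot,w)\bigr) = \Pi_{\chi^2,\lambda}(\cdot,w)$ by the spectral expansion \eqref{expansion}, and the argument goes through because $\chi^2$ is again positive, even, Schwartz, with $\widehat{\chi^2} = \hat\chi * \hat\chi$ compactly supported, so \autoref{Lower Bound} applies to $\Pi_{\chi^2,\lambda}$ verbatim. Also, both the paper's Schur--Young step (which requires $1/r = 1 - 1/p + 1/q \le 1$) and your interpolation handle only $p \le q$; your proposed H\"older extension to $q < p$ would give only an $O(1)$ bound, not the decaying $\lambda^{(m-1)(1/p-1/q)}$, so the theorem should be read as asserting the estimate for $2 \le p \le q \le \infty$.
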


We now turn to estimates for eigenfunctions. The Toeplitz operator \eqref{eqn:TOEPLITZ} is a positive, self-adjoint, elliptic operator on \(H^{2}\left( \pmt \right)\) in the sense of \cite{BoutetGuillemin81}, so has a discrete spectrum $ 0 = \lambda_0 <\lambda_1\leq  \lambda_2 \leq \dotsb$ with associated \(L^{2}\)-normalized eigenfunctions
\begin{align}\label{eqn:ELJ}
  \pdp e_{\lambda_j} = \lambda_j e_{\lambda_j}, \qquad \norm{e_{\lambda_j}}_{L^{2}\left( \pmt \right)} = 1.
\end{align}
An immediate consequence of the eigenfunction expansion
\begin{equation}
\pcl = \sum_{j : \lambda_j \le \lambda} \chi( \lambda - \lambda_j)e_{\lambda_j} \otimes \overline{e_{\lambda_j}},\label{expansion}
\end{equation}
of the spectral projection \eqref{eqn:PCL} together with \autoref{theo:MAINTHEO} in the case \(p = 2\) is the following.

\begin{Cor}[$L^p$ estimates for eigenfunctions of $\pdp$]\label{cor:LP}
Let $M$ be a closed, real analytic Riemannian manifold of dimension $m \geq 2$. Let $e_{\lambda_j}$ be $L^2$-normalized eigenfunctions of $\pdp$ as in \eqref{eqn:ELJ}. Then we have
\begin{align}
  \norm{e_{\lambda_j}}_{L^{q}\left( \pmt\right)} \leq C_{\pmt} \lambda_j^{\left( m - 1 \right)( \frac{1}{2} - \frac{1}{q} )} \qquad (2 \leq q \leq \infty)
\end{align}
\end{Cor}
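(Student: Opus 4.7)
The approach is essentially immediate: apply \autoref{theo:MAINTHEO} with $p = 2$ to the single eigenfunction $e_{\lambda_j}$, choosing the spectral parameter in $\pcl$ to be $\lambda = \lambda_j$ itself. The eigenfunction expansion \eqref{expansion} then converts the mapping-norm inequality into the desired pointwise bound.

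More precisely, I would first substitute $f = e_{\lambda_j}$ into \eqref{expansion} (with spectral parameter $\lambda = \lambda_j$) and use orthonormality of the family $\{e_{\lambda_k}\}$ to compute
\[
  \Pi_{\chi,\lambda_j} e_{\lambda_j} = \sum_{k} \chi(\lambda_j - \lambda_k) \ip{e_{\lambda_j}}{e_{\lambda_k}} e_{\lambda_k} = \chi(0)\, e_{\lambda_j}.
\]
Because $\chi$ is positive by hypothesis, $\chi(0) > 0$, so this identity exhibits $e_{\lambda_j}$ as a nonzero scalar multiple of $\Pi_{\chi,\lambda_j} e_{\lambda_j}$.

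Next, I would apply \autoref{theo:MAINTHEO} with $p = 2$ and $\lambda = \lambda_j$ to obtain
\[
  \chi(0)\, \norm{e_{\lambda_j}}_{L^q(\pmt)} = \norm{\Pi_{\chi,\lambda_j} e_{\lambda_j}}_{L^q(\pmt)} \leq C_{\pmt}\, \lambda_j^{(m-1)(\frac{1}{2} - \frac{1}{q})}\, \norm{e_{\lambda_j}}_{L^2(\pmt)},
\]
and then divide through by $\chi(0)$, using $\norm{e_{\lambda_j}}_{L^2} = 1$, to conclude.

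There is no real obstacle; the corollary is a formal consequence of the two inputs, once one notices that a single $L^2$-normalized eigenfunction is (up to the factor $\chi(0)$) a fixed point of $\Pi_{\chi,\lambda_j}$. A side remark worth making is that this argument shows the $L^2 \to L^q$ case of \autoref{theo:MAINTHEO} is saturated (up to constants) by individual eigenfunctions, so sharpness of \autoref{cor:LP} and sharpness of the $p=2$ case of \autoref{theo:MAINTHEO} are equivalent.
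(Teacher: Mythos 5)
Your proof is correct and takes exactly the paper's approach: the paper presents \autoref{cor:LP} as an immediate consequence of the eigenfunction expansion \eqref{expansion} and the $p=2$ case of \autoref{theo:MAINTHEO}, and your display simply spells out the one-line computation $\Pi_{\chi,\lambda_j}e_{\lambda_j} = \chi(0)\,e_{\lambda_j}$ (using orthonormality and $\chi(0)>0$) that makes the deduction formal.

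One caveat about your side remark. The identity $\Pi_{\chi,\lambda_j}e_{\lambda_j} = \chi(0)\,e_{\lambda_j}$ shows that sharpness of \autoref{cor:LP} \emph{implies} sharpness of the $p=2$ case of \autoref{theo:MAINTHEO} (a sequence of eigenfunctions attaining the upper bound in \autoref{cor:LP} is automatically a sequence of near-extremizers for $\pcl\colon L^2\to L^q$). The converse implication is not immediate: the operator-norm bound could in principle be attained by superpositions of eigenfunctions within the spectral window rather than by any single $e_{\lambda_j}$, and indeed the paper's proof of sharpness of \autoref{theo:MAINTHEO} uses the coherent states $\Phi^{w}_{\chi,\lambda}(z) = \pcl(z,w)/\norm{\pcl(\cdot,w)}_{L^p}$, not individual eigenfunctions, as the extremizing family. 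So "sharpness of \autoref{cor:LP}" and "sharpness of the $p=2$ case of \autoref{theo:MAINTHEO}" should not be asserted as equivalent on the strength of this argument alone; only the forward implication follows.
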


The conclusion of \autoref{theo:MAINTHEO} also holds for spectral projections onto short spectral intervals
\begin{equation}\label{eqn:SHORTWINDOW}
\Pi_{[\lambda , \lambda + 1]} = \sum_{j : \lambda \le \lambda_j \le \lambda + 1} e_{\lambda_j} \otimes \overline{e_{\lambda_j}},
\end{equation}
which we state below in \autoref{Short window}. This result may be viewed as a Grauert tube analogue, with $\pdp$ replacing $\sqrt{-\Delta}$, of Sogge's $L^p$ estimate \cite{SoggeLP} for spectral projections of the Lapalcian.

\begin{Th}\label{Short window}
  Let $M$ be a closed, real analytic Riemannian manifold of dimension $m \geq 2$. Then we have the sharp estimate
  \begin{align}
    \norm{\Pi_{[\lambda, \lambda + 1]} f}_{L^{q}(\pmt)} \leq C_{\pmt} \lambda^{(m - 1) (\frac{1}{2} - \frac{1}{q} )} \norm{ f}_{L^{2}( \pmt )} \qquad ( 2 \leq q \leq \infty )
  \end{align}
\end{Th}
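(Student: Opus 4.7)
My plan is to deduce the short-window bound from the on-diagonal asymptotics contained in \autoref{Graph Scaling Theorem} by handling the endpoints $q = 2$ and $q = \infty$ separately and then interpolating. The endpoint $q = 2$ is immediate since $\Pi_{[\lambda, \lambda+1]}$ is an orthogonal projection, so $\|\Pi_{[\lambda, \lambda+1]}\|_{L^2 \to L^2} \le 1$. The crux of the work is the endpoint $q = \infty$, after which Riesz--Thorin interpolation between the $L^2 \to L^2$ bound and the $L^2 \to L^\infty$ bound produces the stated $\lambda^{(m-1)(1/2 - 1/q)}$ rate for every $q \in [2, \infty]$.

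For $q = \infty$, I invoke the standard kernel identity $\|P\|_{L^2 \to L^\infty}^2 = \sup_x P(x, x)$ for an orthogonal projection $P$, which follows from Cauchy--Schwarz applied to $Pf(x) = \int P(x, y) f(y)\, dy$ using $\int |P(x, y)|^2\, dy = P(x, x)$. This reduces the $q = \infty$ bound to a pointwise control of $\Pi_{[\lambda, \lambda+1]}(x, x)$. Setting $c_0 := \min_{\mu \in [-1, 0]} \chi(\mu) > 0$ (positive by the positivity of $\chi$), the termwise inequality $\chi(\lambda - \lambda_j) \ge c_0 \, \mathbf{1}_{[\lambda, \lambda+1]}(\lambda_j)$ in the eigenbasis expansion yields $\Pi_{[\lambda, \lambda+1]}(x, x) \le c_0^{-1} \pcl(x, x)$ pointwise. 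Specializing \autoref{Graph Scaling Theorem} at $s = 0$ and $(\theta, u) = (\varphi, v) = 0$, the value $\hat{\Pi}_{\mathcal{H}, I}(0, 0, 0, 0) = 1/\pi^{m-1}$ gives
\[
\pcl(p, p) = \frac{C_{m, M}}{\tau^m \pi^{m-1}} \lambda^{m-1} + O(\lambda^{m - 3/2})
\]
uniformly in $p \in \pmt$. Consequently $\sup_p \Pi_{[\lambda, \lambda+1]}(p, p) \le C \lambda^{m-1}$ and therefore $\|\Pi_{[\lambda, \lambda+1]}\|_{L^2 \to L^\infty} \le C \lambda^{(m-1)/2}$, completing the upper bound after interpolation.

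For sharpness I propose to test against the coherent state $f_\lambda := \Pi_{[\lambda, \lambda+1]}(\cdot, p_0) / \|\Pi_{[\lambda, \lambda+1]}(\cdot, p_0)\|_{L^2}$ at a fixed $p_0 \in \pmt$. Because $f_\lambda$ lies in the range of $\Pi_{[\lambda, \lambda+1]}$, one has $\Pi_{[\lambda, \lambda+1]} f_\lambda = f_\lambda$ and $\|f_\lambda\|_{L^2} = 1$, so the ratio $\|\Pi_{[\lambda, \lambda+1]} f_\lambda\|_{L^q} / \|f_\lambda\|_{L^2}$ reduces to $\|\Pi_{[\lambda, \lambda+1]}(\cdot, p_0)\|_{L^q} / \Pi_{[\lambda, \lambda+1]}(p_0, p_0)^{1/2}$. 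In the parabolic Heisenberg coordinates at $p_0$, the sharp-projection kernel has magnitude $\sim \lambda^{m-1}$ on a set of volume $\sim \lambda^{-(m-1)}$ in the non-Reeb directions (with oscillatory but non-decaying behavior in the Reeb direction, effectively cut off on an $O(1)$ scale), so the numerator behaves like $\lambda^{(m-1)(1 - 1/q)}$ while the denominator behaves like $\lambda^{(m-1)/2}$, giving the matching lower bound $\asymp \lambda^{(m-1)(1/2 - 1/q)}$.

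The principal obstacle is this sharpness step: \autoref{Graph Scaling Theorem} controls the smoothed projection $\pcl$, but to execute the lower-bound computation we need the analogous leading-order scaling asymptotics for the sharp projection $\Pi_{[\lambda, \lambda+1]}$---in particular, the local Weyl-type lower bound $\Pi_{[\lambda, \lambda+1]}(p_0, p_0) \gtrsim \lambda^{m-1}$ and the concentration scale of the kernel. This transfer is a Tauberian argument in the spirit of Safarov--Vassiliev: comparing $\Pi_{[\lambda, \lambda+1]}$ with a carefully chosen sum of translates of $\pcl$ and exploiting monotonicity of the spectral counting function to upgrade smoothed asymptotics to sharp asymptotics. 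Once this transfer is in place, the order-of-magnitude sharpness claimed in the theorem follows directly from the explicit Bargmann--Fock model computation.
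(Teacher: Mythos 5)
Your upper bound is a correct alternative to the paper's argument. The paper formulates the estimate in the dual form $\|\Pi_{[\lambda,\lambda+1]}f\|_{L^2} \le C\lambda^{(m-1)(1/p-1/2)}\|f\|_{L^p}$ and obtains it by the termwise comparison $\mathbf{1}_{[\lambda,\lambda+1]}(\lambda_j) \le c_0^{-1}\chi(\lambda-\lambda_j)$, followed by an application of \autoref{theo:MAINTHEO} at $q=2$. You instead run the same termwise comparison on the diagonal kernel, derive the $L^2\to L^\infty$ endpoint from the on-diagonal case $s=0$, $(\theta,u)=(\varphi,v)=0$ of \autoref{Graph Scaling Theorem}, and then interpolate with the trivial $L^2\to L^2$ bound. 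Both routes ultimately hinge on the positivity of $\chi$ near the origin; yours is arguably a bit lighter in that it only needs the on-diagonal scaling asymptotics rather than the full $L^p\to L^2$ machinery.

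The sharpness step, however, has a genuine gap, and you have correctly identified it but underestimated its seriousness. You propose to test against $f_\lambda = \Pi_{[\lambda,\lambda+1]}(\cdot,p_0)/\|\Pi_{[\lambda,\lambda+1]}(\cdot,p_0)\|_{L^2}$, for which you then need both on-diagonal and off-diagonal scaling asymptotics of the \emph{sharp} kernel $\Pi_{[\lambda,\lambda+1]}$. An on-diagonal local Weyl lower bound $\Pi_{[\lambda,\lambda+1]}(p_0,p_0)\gtrsim\lambda^{m-1}$ can indeed be recovered by Tauberian methods, but to estimate $\|\Pi_{[\lambda,\lambda+1]}(\cdot,p_0)\|_{L^q}$ from below you also need a lower bound for the sharp kernel in the full parabolic neighborhood of the Reeb orbit, i.e.\ \emph{off-diagonal} sharp-kernel asymptotics. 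Tauberian arguments give much weaker control off diagonal, and you would essentially be re-proving a local Weyl asymptotic with remainder for the CR Toeplitz operator along with its concentration profile, which is far from ``direct.'' The paper sidesteps this entirely: working in the dual formulation $L^p\to L^2$, it tests against the \emph{smoothed} coherent state $f_{\chi,\lambda}(z)=\pcl(z,w)$, so that $\|\Pi_{[\lambda,\lambda+1]}f_{\chi,\lambda}\|_{L^2}^2=\sum_{\lambda\le\lambda_j<\lambda+1}|\chi(\lambda-\lambda_j)|^2|e_{\lambda_j}(w)|^2$ is computed directly from the eigenfunction expansion and the Weyl counting law $N(\lambda+1)-N(\lambda)\sim\lambda^{m-1}$, and $\|f_{\chi,\lambda}\|_{L^p}\sim\lambda^{(m-1)(1-1/p)}$ is already known from the sharpness of \autoref{theo:MAINTHEO}. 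No kernel asymptotics for the sharp projection are ever required. You should either adopt the paper's choice of test function and dual formulation for the sharpness half, or fully develop and justify the Tauberian transfer including the off-diagonal lower bound, which would be a substantial additional argument.
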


Our next set of results concern analytic continuations to the Grauert tube boundary of Laplace eigenfunctions on $M$. Let $0 = \mu_0 < \mu_1 \le \mu_2 \le \dotsb$ be eigenvalues of $\sqrt{-\Delta}$ with associated $L^2$-normalized eigenfunctions
\begin{align}\label{eqn:EIGENEQUATION}
    -\Delta \phi_{\mu_j} = \mu_j^{2}\phi_{\mu_j}, \qquad  \norm{\phi_{\mu_j}}_{L^{2}(M)} = 1.
\end{align}
The analytic extensions $\phi_{\mu_j}^\C$, which are CR holomorphic functions on $\pmt$, are defined by
\begin{equation}\label{eqn:UNNORMALIZED}
\phi_{\mu_j}^\C = e^{\tau \mu_j} U(i\tau) \phi_{\mu_j},
\end{equation}
where $U(i\tau) = \exp(-\tau \sqrt{-\Delta})$ is the Poisson operator. See \autoref{sec:OPERATORS} for details. The probability amplitudes
\begin{align}\label{eqn:HUSIMI}
  \tilde{\phi}^{\C}_{\mu_j} = \frac{\phi^{\C}_{\mu_j}}{\norm{\phi^{\C}_{\mu_j}}_{L^{2}( \pmt )}}
\end{align}
are \emph{Husimi distributions}, that is, \emph{microlocal lifts} of $\phi_{\mu_j}$ to phase space $\pmt \cong S^*_\tau M$. They are ``approximate eigenfunctions'' of $\pdp$ in the following sense.

\begin{Prop}\label{prop:Approx Reproduce}
  Let \(\{\lambda_j\}\) and \(\{\mu_j\}  \) be the eigenvalues of \(\pdp\) and \(\sqrt{-\Delta} \), respectively. Let \(\pcl\) be as in \eqref{eqn:PCL} and \(\tilde{\phi}^{\C}_{\mu_j}\) be as in \eqref{eqn:HUSIMI}. Then, \(\mu_j = \lambda_j + O(1)\) as \(j \to \infty\) and
    \begin{align}
    \norm{\pcl{\tilde{\phi}^{\C}_{\mu_j}} - \tilde{\phi}^{\C}_{\mu_j}}_{L^2(\pmt)} &= O(1),\\
    \norm{\pdp \tilde{\phi}^{\C}_{\mu_j} -   \lambda_j\tilde{\phi}^{\C}_{\mu_j}}_{L^2(\pmt)} &=  O(1).
  \end{align}
 Furthermore, we have the sharp estimate
 \begin{align}\label{eqn:HUSIMILP}
     \norm{\tilde{\phi}^{\C}_{\mu_j}}_{L^{p}( \pmt )} \leq  C_{\pmt}\lambda_j^{( m - 1 )( \frac{1}{2} - \frac{1}{q} )} \qquad (2 \leq p \leq \infty).
 \end{align}
\end{Prop}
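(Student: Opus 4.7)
My plan rests on the Poisson operator $U(i\tau) = \exp(-\tau\sqrt{-\Delta})$, which by Boutet de Monvel--Sj\"{o}strand is an elliptic Hermite Fourier integral operator from $L^2(M)$ into $H^2(\pmt)$ whose canonical relation is the graph of the symplectic diffeomorphism $S^*_\tau M \to \pmt$. Modulo operators of arbitrarily negative order in the Boutet de Monvel--Guillemin Toeplitz calculus, it conjugates $\sqrt{-\Delta}$ on $M$ to $\pdp$ on $\pmt$. The two operators therefore obey matching Weyl laws up to $O(\lambda^{m-1})$, and since the eigenvalue spacing at level $\lambda$ is of order $\lambda^{1-m}$, this yields $\mu_j = \lambda_j + O(1)$. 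Applying the intertwining $\pdp\, U(i\tau) = U(i\tau)\sqrt{-\Delta} + R$ (with $R$ bounded on $L^2$) to $\phi_{\mu_j}$ and renormalizing gives $\|\pdp\tilde{\phi}^{\C}_{\mu_j} - \mu_j \tilde{\phi}^{\C}_{\mu_j}\|_{L^2(\pmt)} = O(1)$, which combined with the eigenvalue comparison is the third claim. The second claim is then immediate: the expansion \eqref{expansion} shows $\Pi_{\chi,\lambda}$ acts on $L^2(\pmt)$ as a bounded spectral multiplier of norm at most $\|\chi\|_{\infty}$, so for $\lambda = \lambda_j$ both $\Pi_{\chi,\lambda_j}\tilde{\phi}^{\C}_{\mu_j}$ and $\tilde{\phi}^{\C}_{\mu_j}$ are uniformly $L^2$-bounded, making their difference trivially $O(1)$.

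The main content is the sharp $L^p$ estimate \eqref{eqn:HUSIMILP}. I would promote the $O(1)$-quasimode bound to the \emph{tempered} bound
\[
\|(\pdp - \mu_j)^N \tilde{\phi}^{\C}_{\mu_j}\|_{L^2(\pmt)} = O_N(1) \qquad (N \in \N),
\]
by iterating the intertwining: at each step the Toeplitz conjugation remainder improves by one order, and $(\sqrt{-\Delta} - \mu_j)\phi_{\mu_j} = 0$ kills the leading term. Writing $\tilde{\phi}^{\C}_{\mu_j} = \sum_k c_k e_{\lambda_k}$, this bound is equivalent to Schwartz-type spectral decay
\[
\sum_{k\,:\,n \leq |\lambda_k - \mu_j| < n+1} |c_k|^2 \leq C_N (1+n)^{-2N}.
\]
I would then decompose $\tilde{\phi}^{\C}_{\mu_j} = \sum_{n \in \Z} f_n$ with $f_n = \Pi_{[\mu_j + n,\,\mu_j + n + 1]}\tilde{\phi}^{\C}_{\mu_j}$, apply \autoref{Short window} to each piece in the form $\|f_n\|_{L^p(\pmt)} \leq C (\mu_j + |n|)^{(m-1)(\frac{1}{2}-\frac{1}{p})} \|f_n\|_{L^2(\pmt)}$, and choose $N$ large enough that the rapid decay of $\|f_n\|_{L^2}$ defeats the polynomial growth of the $L^p$ prefactor. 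Summing gives $\|\tilde{\phi}^{\C}_{\mu_j}\|_{L^p} = O(\lambda_j^{(m-1)(1/2 - 1/p)})$. Sharpness is checked on extremal test sequences such as Husimi distributions associated to zonal spherical harmonics on the round sphere.

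The main obstacle is the tempered quasimode bound itself. A na\"{i}ve iteration of $(\pdp - \mu_j)$ against the intertwining remainder $R$ threatens to accumulate positive powers of $\mu_j$ at each stage, which would destroy uniformity in $j$. To prevent this one must exploit the full Hermite-FIO structure of $U(i\tau)$ rather than just its leading symbol, and use that the Toeplitz conjugation remainder genuinely improves by one order at every step of the recursion. Once this tempered bound is in hand, the remaining spectral decomposition and summation are routine.
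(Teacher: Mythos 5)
Your derivation of the quasimode bound via the Poisson operator intertwining is essentially the paper's argument, and your observation that the first display is trivially $O(1)$ as a consequence of $\norm{\pcl}_{L^2 \to L^2} \le \norm{\chi}_\infty$ is fine. The $\mu_j = \lambda_j + O(1)$ claim is extracted differently — you invoke matching Weyl laws, while the paper applies a standard distance-to-spectrum bound (\cite[Theorem~C.11]{Zworski12}) directly to the quasimode estimate — but either route is legitimate.

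For the $L^p$ bound \eqref{eqn:HUSIMILP} your proposal diverges substantially from the paper, and the obstacle you flag is real. The paper avoids tempered quasimode bounds entirely: it imports Zelditch's sup-norm comparison $\norm{\tilde{\phi}^{\C}_{\lambda_j} - e_{\lambda_j}}_{L^\infty(\pmt)} = O(\lambda_j^{(m-1)/2})$ from \cite[Theorem~0.1]{Zelditch20Husimi}, combines it with the obvious $L^2$ bound $\norm{\tilde{\phi}^{\C}_{\lambda_j} - e_{\lambda_j}}_{L^2(\pmt)} = O(1)$ by log-convexity of $L^p$ norms, and then applies the triangle inequality against the \emph{exact} eigenfunction bound of \autoref{cor:LP}. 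This is a two-line interpolation once the sup-norm comparison is available, whereas your scheme requires controlling $(\pdp - \mu_j)^N\tilde{\phi}^\C_{\mu_j}$ for all $N$ — a nontrivial induction on the Toeplitz calculus that you correctly identify as unresolved. If you want to pursue your route, the commutator calculus $[\pdp, R]$ does gain an order at each step, so it is plausibly salvageable, but it is far more work than the log-convexity argument.

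Your sharpness claim contains a genuine error. You propose zonal spherical harmonics $Y_N^0$ as the extremal sequence, but one of the main points of this paper is that the real-domain dichotomy between low and high $L^p$ collapses in the complex domain: there is no critical exponent, and Gaussian beams (highest weight harmonics $Y_N^N$) saturate \eqref{eqn:HUSIMILP} for \emph{all} $p \ge 2$. The paper's \autoref{sec:SHARP} computes this directly on $\partial S^2_\tau$. Zonal harmonics do \emph{not} extremize on $\pmt$: the Lagrangian torus $\Lambda_P$ of \eqref{Lagrangian Torus} embeds without a blowdown singularity under the identification $S^*_\tau M \cong \pmt$, so the sup-norm peaking mechanism that makes $Y_N^0$ extremal on $M$ is absent upstairs. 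Using zonal harmonics would fail to verify sharpness.
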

The $L^p$ bound \eqref{eqn:HUSIMILP} may also be deduced from the sup norm bound of Zelditch \cite{Zelditch20Husimi} and log-convexity of \(L^{p}\) norms. In \autoref{sec:SHARP}, we show that the bound is saturated by complexified Gaussian beams. This result is yet another Grauert tube analogue, with the analytically continued $\tilde{\phi}_{\lambda_j}^\C$ replacing $\phi_{\lambda_j}$, of Sogge's $L^p$ estimate for eigenfunctions. Note that, unlike in the real domain, there are no separate estimates for ``high'' versus ``low'' $L^p$; see \autoref{sec:COMPREAL} for further discussion.

\begin{Rem}
In \cite{ChangRabinowitz21}, we showed that analytically continued eigenfunctions on a torus are approximate eigenfunctions of $\pdp$ by an explicit computation, and that analytically continued spherical harmonics are in fact exact eigenfunctions of $\pdp$.
\end{Rem}

To state the last result, we fix, as before, a positive, even Schwartz function $\chi$ whose Fourier transform is compactly supported with $\hat{\chi}(0) = 1$ to construct the tempered partial sums
\begin{align} \label{Tempered Sum}
   \Pcm = \sum_{j : \mu_j \le \mu} \chi( \mu - \mu_j) e^{ - 2 \tau \mu_j}\phi^{\C}_{\mu_j} \otimes \overline{\phi^{\C}_{\mu_j}}.
\end{align}
using \eqref{eqn:UNNORMALIZED}. The prefactor $e^{-2\tau \mu_j}$ is introduced to ``temper'' the exponential growth estimate (\cite[Corollary~3]{Zelditch12potential}) for complexified eigenfunctions:
\begin{equation}
C_1 \mu_j^{-\frac{m-1}{2}} e^{\tau \mu_j} \le \norm{\phi_{\mu_j}^\C(\zeta)}_{L^\infty(\pmt)} \le C_2 \mu_j^{\frac{m-1}{2}} e^{\tau \mu_j}.
\end{equation}

\begin{Rem}
It is shown in \cite{Lebeau,Zelditch12potential} that  \(e^{ - \tau \mu_j}\phi_{\mu_j}^{\C}\) is a Riesz basis (but not an orthonormal basis) in general, so they fail to be reproduced by $\Pcm$.
\end{Rem}

The proof of \autoref{Graph Scaling Theorem} is easily adapted to prove scaling asymptotics for \eqref{Tempered Sum}. Comparing the statements of \autoref{Graph Scaling Theorem} and \autoref{Graph Scaling Theorem2} below, we see the leading order asymptotics of $\pcl$ and $\Pcm$ differ only in the powers of the frequency parameters $\lambda$ or $\mu$.

\begin{Th}[On-shell asymptotics for $\Pcm$]\label{Graph Scaling Theorem2}
		Let $\Pcm$ be as in \eqref{eqn:PCL}. Under the same hypotheses as \autoref{Graph Scaling Theorem}, we have
		\begin{align}
		\Pcm&\left( p +  \left(\frac{\theta}{\mu}, \frac{u}{\sqrt{\mu}}\right);\gtc{s}(p) + \left( \frac{\varphi}{\mu}, \frac{v}{\sqrt{\mu} }\right) \right) \\
&=  \frac{C_{m,M}}{\tau^{m}}e^{i s \mu} \mu^{\frac{m-1}{2}}\hat{\Pi}_{\mathcal{H},M_s}\left(\frac{\theta}{2 \tau}, \frac{u}{\sqrt{\tau} }, \frac{\varphi}{2 \tau}, \frac{v}{\sqrt{\tau} }\right)  \bra*{1 + \sum_{j = 1}^{N}\mu^{ - \frac{j}{2}}P_j(p,s,u,v,\theta,\varphi)}\\
& \quad +  \mu^{^{ - \frac{N + 1}{2}}} R_N\left(p,s, \theta, u , \varphi, v ,\mu \right),
\end{align}
where \(P_j\) is a polynomial in \(\theta,u, \varphi, v\), the remainder $R_N$ satisfies
\begin{align}
\norm{R_{N}( p,s, \theta, u, \varphi, v, \mu )}_{C^{j}\left( \{|(\theta,u)| + |\left( \varphi,v \right)| \leq \rho\}  \right)} \leq C_{N,j,\rho} \quad \text{for } \rho > 0 , \ j = 1,2,3, \dotsc ,
\end{align}
and all quantities vary smoothly with \(p\) and \(s\).
\end{Th}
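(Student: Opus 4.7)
My plan is to mirror the proof of \autoref{Graph Scaling Theorem} step by step, reducing $\Pcm$ to the composition of $\pcl$-type kernels with an explicit Toeplitz amplitude. The reduction begins with the operator identity
\begin{equation*}
  \Pcm \;=\; U(i\tau)\,\chi(\mu - \sqrt{-\Delta})\,U(i\tau)^* \;=\; \int_\R \hat\chi(t)\,e^{-it\mu}\,U(i\tau)\,e^{it\sqrt{-\Delta}}\,U(i\tau)^*\,dt,
\end{equation*}
which follows from \eqref{eqn:UNNORMALIZED} once the prefactor $e^{-2\tau\mu_j}$ in \eqref{Tempered Sum} is absorbed into the two Poisson factors. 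This is the exact structural analogue of the representation $\pcl = \int\hat\chi(t)\,e^{-it\lambda}\,\Pi_\tau e^{it\pdp}\,dt$ on which the proof of \autoref{Graph Scaling Theorem} is built.

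Next, the plan is to invoke the Boutet--Guillemin / Zelditch intertwining between $U(i\tau)\sqrt{-\Delta}$ and $\pdp\,U(i\tau)$ (these agree up to a Toeplitz operator of lower order, since $\pdp$ was engineered to have the principal symbol of $\sqrt{-\Delta}$ transported to $\pmt$) to rewrite
\begin{equation*}
  U(i\tau)\,e^{it\sqrt{-\Delta}}\,U(i\tau)^* \;\sim\; e^{it\pdp}\,\mathcal{A}_\tau,\qquad \mathcal{A}_\tau := U(i\tau)U(i\tau)^*,
\end{equation*}
modulo a Toeplitz operator of lower order admitting a full asymptotic expansion. The problem then reduces to scaling asymptotics for $\Pi_\tau e^{it\pdp}\Pi_\tau$ composed with the positive Toeplitz operator $\mathcal{A}_\tau$. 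The principal symbol of $\mathcal{A}_\tau$ is a positive multiple of $(\pdp)^{-(m-1)/2}$, as may be read off from Zelditch's estimate $\|\phi^\C_{\mu_j}\|_{L^2(\pmt)}^2 \asymp C_m\,\mu_j^{-(m-1)/2}\,e^{2\tau\mu_j}$; this shift in symbolic order by $(m-1)/2$ is precisely what converts the leading power $\lambda^{m-1}$ in \autoref{Graph Scaling Theorem} into $\mu^{(m-1)/2}$ here.

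After this reduction, I would simply rerun the Heisenberg-coordinate stationary phase argument of \autoref{Graph Scaling Theorem} under the rescaling $(\theta,u)\mapsto (\theta/\mu, u/\sqrt{\mu})$ and $(\varphi,v)\mapsto(\varphi/\mu, v/\sqrt{\mu})$. The metaplectic factor $\hat{\Pi}_{\mathcal{H},M_s}$ emerges by the same mechanism as in the original proof, since $M_s$ is determined by the linearization of the same geodesic flow \eqref{eqn:GTC}. The principal symbol of $\mathcal{A}_\tau$ together with the $\tau$-dependent Jacobians supplies the constant $C_{m,M}/\tau^m$, and the full symbolic expansion of $\mathcal{A}_\tau$ combined with the subprincipal Borel sum of the stationary phase expansion assembles into the polynomial corrections $P_j$ and the bounded remainder $R_N$ in the form stated.

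The main obstacle I anticipate is the bookkeeping needed to track the half-integer powers of $\mu^{-1/2}$ across three sources---the subprincipal symbols of $\mathcal{A}_\tau$, the remainder in the intertwining of $U(i\tau)\sqrt{-\Delta}$ with $\pdp\,U(i\tau)$, and the stationary phase expansion for $\Pi_\tau e^{it\pdp}\Pi_\tau$---and to verify that they combine coherently so that the leading constant is exactly $C_{m,M}/\tau^m$ and the error terms contribute only to $P_j$ for $j\geq 1$ and to the $C^j$-bounded remainder $R_N$. Everything else is a mechanical transcription of the argument for \autoref{Graph Scaling Theorem}.
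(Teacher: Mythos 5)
Your proposal is correct in its power counting and in spirit, but it takes a genuinely different route from the paper.

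\textbf{Where the routes diverge.} The paper's proof is extremely short: it replaces the representation $\Pi_\tau e^{it\pdp}\sim\Pi_\tau\hat\sigma_{t,\tau}(\gtc{t})^*\Pi_\tau$ (order-zero symbol, used in \autoref{Graph Scaling Theorem}) by the representation of $U^\C(t+2i\tau)$ supplied directly by \cite[Proposition~7.1]{Zelditch20Husimi}, i.e.\ \eqref{Dynamical Toeplitz Wave group}, whose symbol has order $-(m-1)/2$, and then reruns the stationary phase word for word. It does \emph{not} decompose $U^\C(t+2i\tau)$ as a conjugated wave group times a Toeplitz amplitude; it simply cites the full Toeplitz--FIO description of $U^\C(t+2i\tau)$ as a black box. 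Your approach is to \emph{derive} that description: starting from $\Pcm = U(i\tau)\chi(\mu-\sqrt{-\Delta})U(i\tau)^*$ (which is correct and is exactly the paper's \eqref{eqn:TEMPEREDPCL} written as an operator identity), you conjugate $e^{it\sqrt{-\Delta}}$ past $U(i\tau)$ and absorb $\mathcal{A}_\tau=U(i\tau)U(i\tau)^*$ as a Toeplitz factor. This is more self-contained and conceptually transparent, but the ``bookkeeping'' you flag at the end --- matching the intertwining error between $U(i\tau)\sqrt{-\Delta}$ and $\pdp\,U(i\tau)$ to all orders, and showing the composed Toeplitz amplitude is polyhomogeneous of the asserted order --- \emph{is} precisely the content of \cite[Proposition~7.1]{Zelditch20Husimi}; you would essentially be re-proving that result rather than merely tidying coefficients. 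Also note that the precise identity is $U(i\tau)e^{it\sqrt{-\Delta}}U(i\tau)^* = e^{itQ}\mathcal{A}_\tau$ with $Q = U(i\tau)\sqrt{-\Delta}U(i\tau)^{-1}$, and $Q$ only agrees with $\pdp$ modulo a lower-order Toeplitz operator, so there is an additional perturbative step needed to replace $e^{itQ}$ by $e^{it\pdp}$.

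\textbf{A point in your favor.} You assert that $\mathcal{A}_\tau$ has order $-(m-1)/2$ by reading off $\|\phi^\C_{\mu_j}\|^2_{L^2}\asymp \mu_j^{-(m-1)/2}e^{2\tau\mu_j}$; this is the correct sign, and it is in fact what makes the final exponent $\mu^{(m-1)/2}$ come out. The paper's own \autoref{Unitary pseudo}(2) states $A_\tau\in\Psi^{(m-1)/2}(\pmt)$ with symbol $|\sigma|_g^{(m-1)/2}$, which appears to have the wrong sign (one can check consistency from the use of that lemma in the proof of \autoref{prop:Approx Reproduce}, where $U(i\tau)(-\Delta)^{(m+1)/4}U(i\tau)^*$ is required to be first order --- this forces $U(i\tau)U(i\tau)^*$ to have order $-(m-1)/2$, not $+(m-1)/2$). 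Your power counting via $\mathcal{A}_\tau$ is therefore consistent with the statement of the theorem and with the paper's actual use of \eqref{eqn:WAVEGROUPTOEPLITZ}; be aware the sign convention in \autoref{Unitary pseudo}(2) as printed would give $\mu^{3(m-1)/2}$, which is wrong.
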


\begin{Rem}
Our techniques for proving \autoref{Graph Scaling Theorem} and \autoref{theo:MAINTHEO} hold in the more general setting of a compact, strictly pseudoconvex CR manifold $X$ for which $\Box_b$ has closed range. In particular, the Boutet de Monvel--Sj\"{o}strand description of the Szeg\H{o} projector remains valid and quantization of the geodesic flow \eqref{eqn:GTC} can be replaced by that of the Reeb flow. The main interest in the Grauert tube setting is the manifestation of the underlying Riemannian geometry as well as the connection between analytic extensions and microlocal lifts of eigenfunctions.
\end{Rem}

\subsection{Comparison to prior results}\label{sec:COMP}

In \autoref{sec:FOURIERANALOGUE}, we briefly justify the operator \eqref{eqn:PCL} as a Grauert tube analogue of the Bergman projections in the line bundle setting; a more detailed discussion is contained in \cite[Section~1.1]{ChangRabinowitz21}. \autoref{sec:LPLINEBUNDLE} recalls some results on $L^p$ estimates on Bergman kernels associated to line bundles, as well as asymptotic expansions of quantized Hamiltonian symplectomorphisms. We return to the real domain in \autoref{sec:COMPREAL} with a comparison of $L^p$ norms of eigenfunctions on the manifold $M$ versus those of analytically continued eigenfunctions on the tube $\pmt$.

\subsubsection{The operators \texorpdfstring{$\pcl$}{Pi} as Fourier components of the Grauert tube Szeg\H{o} kernel}\label{sec:FOURIERANALOGUE}

Let $(L, h) \to (X, \omega)$ be a positive Hermitian line bundle over a closed K\"{a}hler manifold. Let $\partial D = \set{\ell \in L^* : \norm{\ell}_{h^*} = 1}$ be the unit co-circle bundle. The orthogonal projections
\begin{equation}\label{eqn:BERGMANSZEGOLINE}
\Pi_{h^k} \colon L^2(X, L^k) \to H^0(X, L^k) \quad \text{and} \quad \Pi_h \colon L^2(\partial D) \to H^2(\partial D)
\end{equation}
are the \emph{Bergman} and \emph{Szeg\H{o}} projections, respectively. The space $H^0(X, L^k)$ of holomorphic sections is unitarily equivalent to the set of equivariant CR functions $f \in H^2(\partial D)$ satisfying $f(r_\theta x) = e^{ik\theta}f(x)$, where  $r_\theta$ denotes the circle action on $\partial D$. Under this identification, operators \eqref{eqn:BERGMANSZEGOLINE} are related by
\begin{equation}\label{eqn:SPECTRALDECOMP}
\Pi_{h^k} = \frac{1}{2\pi}\int_0^{2\pi} e^{-i k \theta} (r_\theta)^*\Pi_h \,d\theta.
\end{equation}
Note that the Fourier decomposition of $\Pi_h$ coincides with the spectral decomposition of $D_\theta = \frac{1}{i} \frac{\partial}{\partial \theta}$ on $\partial D$.

Turning to the Grauert tube setting of \eqref{eqn:SZEGOPROJ} and \eqref{eqn:GTC}, the naive approach of replacing $(r_\theta)^*\Pi_h$ by $(\gtc{t})^* \Pi_\tau$ is insufficient. Instead, the former is replaced by $\Pi_\tau \widehat{\sigma} (\gtc{t})^* \Pi_\tau$, where $\hat{\sigma}$ is a polyhomogeneous pseudodifferential operator on $\partial M_\tau$ that makes the resulting composition unitary. Thus, we are led to the CR holomorphic analogue of \eqref{eqn:SPECTRALDECOMP} that is
\begin{equation}\label{eqn:SPECTRALDECOMPANALOGUE}
\pcl := \int_\R \hat{\chi}(t) e^{-i\lambda t} \Pi_\tau \hat{\sigma}(\gtc{t})^*\Pi_\tau\,dt \sim  \int_\R \hat{\chi}(t) e^{-i\lambda t} \Pi_\tau e^{it\Pi_\tau D_{\sqrt{\rho}}\Pi_\tau}\,dt.
\end{equation}
Note that \eqref{eqn:SPECTRALDECOMPANALOGUE} is essentially the spectral decomposition of the elliptic Toeplitz operator $\Pi_\tau D_{\sqrt{\rho}}\Pi_\tau$ introduced in \eqref{eqn:TOEPLITZ}. The same operator is also studied in \cite[Theorem~0.12]{Zelditch20Husimi}.

\subsubsection{\texorpdfstring{\(L^p\)}{Lp} estimates and quantized Hamiltonians on line bundles} \label{sec:LPLINEBUNDLE}

In the line bundle setting $(L,h) \to (X,\omega)$, Shiffman--Zelditch \cite[Lemma~4.1]{ShiffmanZelditch03} proved $\norm{\Pi_{h^k}}_{L^p \to L^q} \le C k^{m(1/p - 1/q)}$ by combining the Shur--Young inequality with a near-diagonal Gaussian estimate \cite[Lemma~5.2]{ShiffmanZelditch02}. Consequently, $\norm{s}_{L^p} = O(k^{m(1/2 - 1/p)})$ for all $L^2$-normalized holomorphic sections $s \in H^0(X, L^k)$. 

The proof techniques of our Grauert tube analogue, \autoref{theo:MAINTHEO}, are similar. But, in place of a near-diagonal scaling asymptotics, we need the full strength of \autoref{Graph Scaling Theorem}, which is an asymptotic expansion in a $\lambda^{-\frac{1}{3}}$-neighborhood of the orbit $p \mapsto \gtc{s}(p)$. This finer control of the time evolution under the Reeb flow (i.e., $\gtc{t}$ on $\pmt$ or $r_\theta$ on the circle bundle) is unnecessary in the line bundle setting because rotations of the fiber introduce only an overall phase factor to the near-diagonal scaling asymptotics.

There has also been prior work on quantized Hamiltonian flows on line bundles. More precisely, let \(f \in C^{\infty}(X)\) be a Hamiltonian on the classical phase space (K\"{a}hler manifold) $X$ that induces a \(1\)-parameter group of symplectomorphisms \(\phi_t \colon X \to X\) which lifts to a family of contactomorphisms \(\tilde{\phi}_t \colon \partial D \to \partial D\). As shown by Zelditch \cite{Zelditch97index}, these contactomorphisms may be quantized as unitary maps
\begin{align}
\Phi_t \colon L^2 ( \partial D ) \to L^2( \partial D ), \quad  \Phi_t =  R_t  \Pi_h ( \tilde{\phi}_{ - t} )^{*}  \Pi_h,
\end{align}
in which \(R_{ - t}\) is a zeroth order Toeplitz operator chosen to ensure the unitarity of \(\Phi_t\). In a series of papers, Paoletti \cite{Paoletti20122,Paoletti20121,Paoletti2014} computed scaling asymptotics for the Fourier coefficients (with respect to the $S^1$ action) of \(\Phi_t\) near points on the graph of \(\tilde{\phi}_t\). When \(t = 0 \), this specializes to the scaling asymptotics of \cite{BleherShiffmanZelditch00,ShiffmanZelditch02}. We emphasize that in contrast, \(\gtc{t}\) is simultaneously playing the role of the \(S^1\) action and the Hamiltonian flow in our set up. Nevertheless, our main argument borrows heavily from that of \cite{Paoletti20121}. We also mention the works of Zelditch--Zhou \cite{ZelditchZhou2018,ZelditchZhou20191,ZelditchZhou20192},  which treat other types of asymptotics for partial Bergman kernels of quantized Hamiltonian flows on line bundles. 

\subsubsection{\texorpdfstring{$L^p$}{Lp} norms of eigenfunctions in the real domain}\label{sec:COMPREAL}
In this section we discuss how the \(L^{p}\) estimates of \autoref{theo:MAINTHEO} and \autoref{cor:LP} compare with those of Sogge eigenfunctions in the real domain. Let \(P_{[\lambda, \lambda + 1]}\) denote the orthogonal projection onto the span of Laplace eigenfunctions \(\phi_{\lambda_j}\) with frequencies \(\lambda \leq \lambda_j < \lambda + 1\).

\begin{Th}[{Sogge \cite{SoggeLP}, see also \cite{SoggeFICA,KochTataruZworski}}]
  Let \((M, g)\) be a closed Riemannian manifold of dimension \(n\), then the following estimates are sharp.
  \begin{align}
    \norm{P_{[\lambda, \lambda + 1]}f}_{L^{q}( M )} \leq \begin{cases} C \lambda^{(\frac{n - 1}{2}) ( \frac{1}{2} - \frac{1}{q} )} \norm{f}_{L^{2}( M )} & \text{for $2 \leq q \leq \frac{2(n + 1)}{n - 1}$},\\
     C \lambda^{n( \frac{1}{2} - \frac{1}{q} ) - \frac{1}{2}} \norm{f}_{L^{2}( M )} & \text{for $\frac{2(n + 1)}{n - 1} \leq q < \infty$}.
   \end{cases} 
  \end{align}
  Consequently, for $L^2$-normalized Laplace eigenfunctions \(\phi_{\lambda_j}\) with frequencies $\lambda_j$, we have
  \begin{align}
     \norm{\phi_{\lambda_j}}_{L^{q}( M )} \leq \begin{cases} C \lambda_j^{(\frac{n - 1}{2}) ( \frac{1}{2} - \frac{1}{q} )}  & \text{for $2 \leq q \leq \frac{2(n + 1)}{n - 1}$},\\
     C \lambda_j^{n( \frac{1}{2} - \frac{1}{q} ) - \frac{1}{2}}  & \text{for $\frac{2(n + 1)}{n - 1} \leq q < \infty$}. 
   \end{cases}
  \end{align}
 \end{Th}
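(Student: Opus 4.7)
The plan is to prove Sogge's sharp $L^q$ spectral cluster estimate by a standard reduction to an oscillatory integral estimate via the half-wave propagator. First, I would replace the sharp projector $P_{[\lambda,\lambda+1]}$ by the smoothed cluster $\tilde P_\lambda := \chi(\sqrt{-\Delta} - \lambda)$, where $\chi$ is a nonnegative Schwartz function with $\chi \geq 1$ on $[0,1]$ and $\widehat\chi$ supported in $(-\delta,\delta)$ for some $\delta$ smaller than the injectivity radius of $(M,g)$. Since $\|P_{[\lambda,\lambda+1]}f\|_{L^q} \leq C\|\tilde P_\lambda f\|_{L^q}$ uniformly in $\lambda$, it suffices to bound the mapping norm of $\tilde P_\lambda$.

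By the spectral theorem, $\tilde P_\lambda = \frac{1}{2\pi}\int_{\R} \widehat\chi(t) e^{-it\lambda} e^{it\sqrt{-\Delta}}\,dt$. Because $\operatorname{supp}\widehat\chi \subset (-\delta,\delta)$, only the small-time half-wave propagator contributes, and the H\"ormander parametrix realizes its kernel as a Fourier integral
\begin{equation}
e^{it\sqrt{-\Delta}}(x,y) = (2\pi)^{-n}\int_{\R^n} e^{i(\varphi(x,y,\xi) - t|\xi|_{g_y^{-1}})}\, a(t,x,y,\xi)\, d\xi \quad \text{modulo smoothing},
\end{equation}
with $\varphi$ homogeneous of degree one in $\xi$ and equal to $\langle \exp_y^{-1}(x),\xi\rangle$ in normal coordinates. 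Carrying out the $t$-integration first and then performing stationary phase in the radial $\xi$ direction yields
\begin{equation}
\tilde P_\lambda(x,y) = \lambda^{(n-1)/2}\, a_\lambda(x,y)\, e^{i\lambda d_g(x,y)} + O(\lambda^{(n-3)/2}),
\end{equation}
uniformly for $d_g(x,y) < \delta$, where $a_\lambda$ is a classical symbol of order zero and $d_g$ is the Riemannian distance.

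The core step is to apply Stein's oscillatory integral theorem (the Carleson--Sj\"olin phenomenon) to the operator $T_\lambda f(x) = \int_M e^{i\lambda d_g(x,y)} a_\lambda(x,y) f(y)\dvol_g(y)$. On $\{0 < d_g(x,y) < \delta\}$ the phase $d_g$ has nondegenerate mixed Hessian, and the hypersurfaces traced out by $\nabla_y d_g(x,\cdot)$ as $x$ varies have nonvanishing Gaussian curvature; both properties follow from positive definiteness of the dual metric via the eikonal equation $|\nabla_x d_g|_g = 1$. Stein's theorem then yields $\|T_\lambda\|_{L^2 \to L^{q_0}} \leq C \lambda^{-n/q_0}$ at the endpoint $q_0 = \frac{2(n+1)}{n-1}$, which rearranges to the Sogge endpoint $\|\tilde P_\lambda\|_{L^2 \to L^{q_0}} \leq C\lambda^{\frac{n-1}{2}(1/2 - 1/q_0)}$. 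The $q = \infty$ endpoint follows from the on-diagonal bound $|\tilde P_\lambda(x,x)| \leq C\lambda^{n-1}$ (local Weyl law) and Cauchy--Schwarz. Interpolating these two endpoints produces the high-exponent regime $q \geq q_0$, while interpolating the $q_0$ bound with the trivial $L^2\to L^2$ estimate yields the low-exponent regime $2 \leq q \leq q_0$.

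The main obstacle is the microlocal setup: carrying out the parametrix on a fine geodesic partition of unity, controlling $a_\lambda$ both near and away from the diagonal, and verifying the Carleson--Sj\"olin cone-curvature condition intrinsically on $M$ rather than merely in one fixed normal chart. Sharpness is a separate classical exercise --- Gaussian-beam concentrations along closed geodesics saturate the low-$q$ range, while zonal-type concentrations in shrinking geodesic balls (as on the round sphere) saturate the high-$q$ range.
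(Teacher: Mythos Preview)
The paper does not prove this theorem; it is quoted verbatim from Sogge's work (with citations to \cite{SoggeLP,SoggeFICA,KochTataruZworski}) purely as background for comparison with the Grauert tube estimates, and no argument is supplied. Your outline is the standard proof found in those references: smoothed spectral cluster via the half-wave group, H\"ormander parametrix, stationary phase in the radial variable, the Carleson--Sj\"olin/Stein oscillatory integral theorem at the critical exponent $q_0=\frac{2(n+1)}{n-1}$, the local Weyl law for $q=\infty$, and interpolation. The sharpness remarks (Gaussian beams for low $q$, zonal harmonics for high $q$) are likewise exactly what the paper recalls in the surrounding discussion. So there is nothing to compare---your proposal simply reproduces the classical argument that the paper is citing.
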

Note the presence of a critical exponent \(q_n = \frac{2(n + 1)}{n - 1}\) at which the sharp estimates change. Roughly speaking, high \(L^{q}\) norms measure concentration around single points, whereas low \(L^{q}\) norms measure concentration around larger sets such as geodesics and hypersurfaces. It is well known on the round sphere \(S^{n}\) that the sequence of zonal spherical harmonics at a pole saturate the estimate for \(q > q_n\). On the other hand, the sequence of highest weight spherical harmonics, that is Gaussian beams along a stable elliptic geodesic, saturate the estimate for \(q < q_n\). However, these bounds are rarely sharp on other manifolds. For example, on the flat torus all eigenfunctions have \(L^{q}\) norms bounded by \(O(1)\). An interesting question in this direction is which manifolds admit sequences of eigenfunctions that saturate these bounds. We point the readers to \cite{SoggeTothZelditch,SoggeZelditchFocal1,SoggeZelditchFocal2,SoggeZelditchMax} for research in this topic.

Although we project onto the orthonormal basis consisting of eigenfunctions of \(\pdp\) rather than onto the span \(\tilde{\phi}_{\lambda_j}^{\C}\), thanks to \autoref{prop:Approx Reproduce} we can interpret \autoref{Short window} as a complexified version of the theorem above. In the complex setting there is no critical exponent \(q_n\) differentiating the behavior between the low and high \(L^{q}\) norms. Indeed, the exponent in our sharp estimate is analogous to that of Sogge's in the low \(L^{q}\) regime, and we show in \autoref{sec:SHARP} that complexified Gaussian beams are extremals for all \(p\).

Our main result has an interpretation as measuring concentration in phase space. As discussed in \cite{Zelditch20Husimi}, the squares of \(\tilde{\phi}^{\C}_{\lambda_j}\) are microlocal lifts of \(\phi_{\lambda_j}\) to  \(\pmt \cong S^{*}_{\tau}M\), so they may be viewed as  probability densities of finding a quantum particle at a phase space point in \(\pmt\). Their marginals are given by the pushforward \(\pi_{*}(\tilde{\phi}^{\C}_{\lambda_j})^2\) under the natural projection \(\pi \colon S^{*}_{\tau}M \to M\). It is natural to ask how the marginal densities of these Husimi distributions relate to eigenfunction concentration on $M$. Other types of phase space norms of eigenfunctions have been studied by Blair--Sogge \cite{BlairSogge1,BlairSogge2}. We also mention the work of Galkowski \cite{Galkowski}, which uses defect measures to study eigenfunction concentration. It would be interesting to compare the results and techniques with those of complexification.

\subsection{Organization of the paper}\label{sec:ORG}
\autoref{sec:BACKGROUND} collects the relevant facts pertaining to Grauert tubes. We recall the Szeg\H{o} projector as a complex Fourier integral operator (FIO) with a positive complex canonical relation as well as the Boutet de Monvel--Sj\"{o}strand parametrix for the kernel. We also recall the relevant microlocal properties of the Poisson wave operator \eqref{eqn:POISSON} and the tempered spectral projection \eqref{eqn:TEMPEREDPROJ} studied in  \cite{Zelditch07complex,Zelditch12potential,Zelditch20Husimi}. Also in \autoref{sec:BACKGROUND} is a brief review the metaplectic representation on the Bargmann--Fock space of \(\C^{n}\).

The proofs of the two scaling asymptotics are contained in \autoref{sec:PROOFSCALING}, while the $L^p$ estimates for the projector and for eigenfunctions are found in \autoref{sec:LP}. In section 6 we prove the \autoref{theo:MAINTHEO}, \autoref{Short window}, and \autoref{prop:Approx Reproduce}. Finally in section 7 we demonstrate that Gaussian beams on the sphere saturate \(L^{p}\) bounds and give a geometric explanation.

\subsection{Acknowledgment}
The authors would like to thank Steve Zelditch for bringing our attention to Grauert Tubes and many helpful discussions in the writing of this article.

\section{Background}\label{sec:BACKGROUND}

We assume throughout that \((M,g)\) is a closed, real analytic Riemannian manifold of dimension $m \geq 2$.  Readers may consult \cite{GuilleminStenzel91,GuilleminStenzel92,LempertSzoke91,LempertSzoke01,GolseLeichtnamStenzel96}  for geometry of and analysis on Grauert tubes (particularly in relation to the complex Monge--Amp\`{e}re equation and complexified geodesics), as well as a paper \cite{ChangRabinowitz21} of the authors with a more detailed discussion.

\subsection{K\"{a}hler geometry on Grauert tubes}\label{sec:GTUBE}

A real analytic Riemannian manifold $M$ admits a complexification \(M_{\C}\) into which $M$ embeds as a totally real submanifold. The \emph{Grauert tube function} is defined by
\begin{equation}\label{eqn:GRAUERTTUBEFUNCTION}
    \sqrt{\rho} \colon U \subset M_\C \to \R, \quad \sqrt{\rho}(z) = \frac{1}{2i} \sqrt{r^2_\C(z, \overline{z})},
\end{equation}
where  \(r^2_\C(z,\overline{w})\) is the analytic extension of the square of the Riemannian distance function \(r \colon M \times M \to \R\) to a neighborhood of the diagonal in $M_\C \times \overline{M}_\C$. In a neighborhood of $M$ in $M_\C$, the square \(\rho\)  of \eqref{eqn:GRAUERTTUBEFUNCTION} is the unique strictly plurisubharmonic function such that the metric induced by the K\"{a}hler form \(i \partial \overline{\partial} \rho \) restricts to the Riemannian metric \(g\) on \(M\). 

For each \(0 < \tau \le \tau_{\mathrm{max}}\), the sublevel set
\begin{equation}\label{eqn:MTAU}
    M_{\tau} = \{z \in M_\C : \sqrt{\rho}(z) < \tau\}
\end{equation}
is called the \emph{Grauert tube of radius \(\tau\)}. It is diffeomorphic \cite[Theorem~1.5]{GolseLeichtnamStenzel96} to the co-ball bundle $B^*_\tau M = \{(x,\xi) \in T^*M : \abs{\xi}_x < \tau\}$ of radius $\tau$ under the imaginary-time exponential map
\begin{equation}\label{eqn:DIFFEO}
    E \colon B^{*}_{\tau}M \to M_\tau, \quad E(x,\xi) = \exp_x^\C i\xi.
\end{equation}

Let \(G^t\) denote the homogeneous geodesic flow, that is, the Hamilton flow of \(|\xi|_x\), on the cotangent bundle. Then, for each $0 < \tau \le \tau_{\mathrm{max}}$, the $C^\omega$ diffeomorphism \eqref{eqn:DIFFEO} conjugates the geodesic flow on the sphere bundle $S^*_\tau M$ to the Hamilton flow $\gtc{t} = \exp t\Xi_{\sqrt{\rho}}$ of the Grauert tube function on $\partial M_\tau$:
\begin{equation}\label{eqn:GTTAU}
G^t_\tau \colon \partial M_\tau \to \partial M_\tau, \quad G^t_\tau = E \circ G^t \circ E^{-1}|_{\partial M_\tau}.
\end{equation}
With this identification in mind, we will henceforth refer to $\gtc{t} \colon \pmt \to \pmt$ as the ``geodesic flow.''

\subsection{Contact and CR structure on the Grauert tube boundary}\label{sec:CONTACT} 

The Grauert tube $M_\tau$ is a strongly pseudoconvex domain thanks to the existence of the strictly plurisubharmonic exhaustion function $\sqrt{\rho}$. The pullbacks of the canonical 1-form $\alpha_{T^*M} = \xi \,dx$ and the symplectic form $\omega_{T^*M} = d\xi \wedge dx$ on the cotangent bundle under the diffeomorphism \eqref{eqn:DIFFEO} are
\begin{equation}\label{eqn:EQUAL}
\alpha:= (E^{-1})^* \alpha_{T^*M} = d^c\sqrt{\rho} \quad \text{and} \quad (E^{-1})^*\omega_{T^*M} = dd^c \rho.
\end{equation}

We endow the Grauert tube boundary $\pmt$ with the volume form
\begin{equation}\label{eqn:CONTACTVOLUME}
d \mu_{\tau} = (E^{-1})^*(\alpha_{T^*M} \wedge \omega_{T^*M}^{m-1})\Big|_{\partial M_\tau},
\end{equation}
which is the pullback of the standard Liouville volume form on $S^*_\tau M$. Since $\pmt$ is a real hypersurface in the K\"{a}hler manifold $M_{\tau_{\mathrm{max}}} \subset (M_\C,J)$, we see that \(H = JT\partial M_\tau \cap T\partial M_\tau \) is a real \(J\)-invariant hyperplane bundle. The restriction $\alpha|_{\pmt}$ of the 1-form in \eqref{eqn:EQUAL} is a contact form on $\pmt$ with $\ker \alpha = H$. 

 The characteristic vector field given by \(T = \Xi_{\sqrt{\rho}}\) is the unique one on \(\pmt\) satisfying \(\alpha(T) = 1\) and \( d \alpha(T, \, \cdot\, ) = 0\). The complexification of the decomposition $T\partial M_\tau = H \oplus \R T$ yields a CR structure:
 \begin{align}
    T^{\C} \pmt = H^{(1,0)} \pmt \oplus H^{(0,1)} \pmt \oplus \C T,
\end{align}
where \(H^{(1,0)}\) and \(H^{(0,1)}\) are the $J$-holomorphic and $J$-antiholomorphic subspaces, respectively.

\subsection{The Szeg\H{o} projector and the Boutet de Movel--Sj\"{o}strand parametrix}

     The Szeg\H{o} projector \(\Pi_{\tau}\) associated to the boundary of a Grauert tube is the orthogonal projection
     \begin{align}
         \Pi_\tau \colon L^{2}(\pmt, d \mu_{\tau}) \to H^{2}( \pmt, d \mu_{\tau})
     \end{align}
     onto the Hardy space consisting of boundary values of holomorphic functions in $M_\tau$ that are square integrable with respect to the volume form \eqref{eqn:CONTACTVOLUME}. This  is a Fourier integral operator with a positive complex canonical relation whose real points are the graph of the identity map on the symplectic cone
\begin{equation}\label{eqn:SIGMATAU}
\Sigma_\tau = \set{(\zeta, r \alpha_\zeta): r \in \R_+} \subset T^*(\partial M_\tau)
\end{equation}
spanned by the contact form \eqref{eqn:EQUAL}.
		Using \eqref{eqn:DIFFEO}, we can construct a symplectic equivalence
\begin{equation}\label{eqn:IOTA}
\iota_\tau \colon T^*M - 0 \to \Sigma_\tau, \quad \iota_\tau(x, \xi) = \p[\bigg]{E\p[\Big]{x, \tau \frac{\xi}{\abs{\xi}}}, \abs{\xi} \alpha_{E(x, \tau \frac{\xi}{\abs{\xi}})}}.
\end{equation}

Details of the symbol of the Szeg\H{o} projector can be found in \cite[Theorem~11.2]{BoutetGuillemin81}. Briefly, let $\Sigma_\tau^\perp \otimes \C$ be the complexified normal bundle of \eqref{eqn:SIGMATAU}. The symbol $\sigma(\Pi_\tau)$ of $\Pi_\tau$ is a rank one projection onto a ground state $e_{\Lambda_\tau}$, which is annihilated by a Lagrangian system of Cauchy--Riemann equations corresponding to a Lagrangian subspace $\Lambda_\tau \subset \Sigma_\tau^\perp \otimes \C$.

The time evolution $\Pi_\tau \mapsto \gtc{-t}\Pi_\tau\gtc{t}$ under the Hamilton flow \eqref{eqn:GTTAU} yields another a rank one projection onto some time-dependent ground state $e_{\Lambda_\tau^t}$, where $\Lambda_\tau^t$ is the pushforward of $\Lambda_\tau$ under the flow. The quantity
\begin{equation}\label{eqn:OVERLAP}
\sigma_{t, \tau, 0} = \ang{e_{\Lambda_\tau^t} , e_{\Lambda_\tau}}^{-1}
\end{equation}
appears in \eqref{Dynamical Toeplitz} and \eqref{Dynamical Toeplitz Wave group}. See \cite[Section~4.3]{Zelditch14intersect} for details.

		The Szeg\H{o} kernel \(\Pi_\tau(z,w)\) is defined by the relation
     \begin{align}\label{eqn:SZEGOKERNEL}
         \Pi_\tau f(z) = \int_{\pmt} \Pi_\tau(z,w)f(w) \, d \mu_{\tau}(w) \quad  \text{for all \(f \in L^{2}(\pmt)\)}.
     \end{align}
		To describe an oscillatory integral representation for the kernel, we introduce the defining function
\begin{equation}\label{eqn:DEFININGFUNCTION}
\ptau \colon M_{\tau_\mathrm{max}} \to [0,\infty), \quad \ptau(z) = \rho(z) - \tau^2,
\end{equation}
so that $\ptau < 0$ in $M_\tau$ and $\ptau = 0$ on $\partial M_\tau$.
Let $\ptau(z,\overline{w})$ be the analytic extension of $\ptau(z) = \ptau(z,\overline{z})$ to $M_\tau \times \overline{M}_\tau$ obtained by polarization. 
\begin{align}\label{eqn:BSJPHASE}
    \psi_\tau(z,w) = \frac{1}{i} \ptau(z, \overline{w}) = \frac{1}{i}\left( - \frac{1}{4}r^{2}_{\C}(z, \overline{w} ) - \tau^{2} \right).
\end{align}
By construction, $\psi$ is holomorphic in $z$, antiholomorphic in $w$, and satisfies $\psi(z,w) = -\overline{\psi(z,w)}$. It appears as the phase function of the following parametrix for \(\Pi_{\tau}\) due to Boutet de Monvel and Sj\"{o}strand.

\begin{Th}[The Boutet de Monvel--Sj\"{o}strand parametrix, {\cite[Theorem~1.5]{BoutetSjostrand76}}]
     With $\psi$ as in \eqref{eqn:BSJPHASE}, there exists a classical symbol
\begin{equation}
s \in S^{m - 1 }( \pmt \times \pmt \times \R^{ +} ) \quad \text{with} \quad s(z,w, \sigma) \sim \sum_{k = 0}^{\infty}\sigma^{m - 1 - k}s_k(z,w) \label{Symbol Expansion BDM}
\end{equation}
so that the Szeg\H{o} kernel \eqref{eqn:SZEGOKERNEL} has the oscillatory integral representation
    \begin{align}
        \Pi_\tau(z,w) = \int_{0}^{\infty}e^{i \sigma \psi_\tau(z,w)}s(z, w,\sigma) \, d \sigma  \quad \text{modulo a smoothing kernel}. \label{Parametrix for Szego Kernel}
    \end{align}
\end{Th}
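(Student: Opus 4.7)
The plan is to realize $\Pi_\tau$, modulo a smoothing operator, as a Fourier integral operator with complex phase in the sense of Melin--Sj\"ostrand. The defining microlocal property of the Szeg\H o projector is that $\Pi_\tau \colon L^2(\pmt) \to H^2(\pmt)$ is the orthogonal projection, equivalently, $\Pi_\tau^* = \Pi_\tau$, $\Pi_\tau^2 = \Pi_\tau$, and $\dbarb \Pi_\tau \equiv 0$ on the range in the first variable, while $\Pi_\tau \equiv \Id$ on CR holomorphic boundary values. The approach is thus to seek an ansatz of the form $\int_0^\infty e^{i\sigma \psi_\tau(z,w)}\,s(z,w,\sigma)\,d\sigma$ satisfying these identities modulo $C^\infty$, and then appeal to the uniqueness of such a parametrix once the microlocal characterization is in place. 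That uniqueness, together with elliptic regularity of $\Box_b$ in the $\bar\partial$-Neumann framework applied to the strictly pseudoconvex domain $M_\tau$, shows that the difference with the true $\Pi_\tau$ is smoothing.

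For the phase, I would use the analytic extension $\ptau(z,\overline{w})$ of the strictly plurisubharmonic defining function $\ptau(z) = \rho(z) - \tau^2$, obtained by polarization on $M_\tau \times \overline{M}_\tau$, and then set $\psi_\tau(z,w) = \tfrac{1}{i}\ptau(z,\overline{w})$. I would verify the three conditions needed for a Melin--Sj\"ostrand phase associated to the symplectic cone $\Sigma_\tau$: (i) $\Imp \psi_\tau \ge 0$, which follows from strict plurisubharmonicity of $\rho$ together with the vanishing of $\ptau$ on the boundary diagonal $\{z = w \in \pmt\}$; (ii) $\psi_\tau$ vanishes to second order on this diagonal, with Hessian of $\Imp \psi_\tau$ positive definite transversally, again by strict plurisubharmonicity; and (iii) $d_z \psi_\tau$ evaluated on the diagonal equals $\alpha_z$ up to a positive factor, matching the contact form \eqref{eqn:EQUAL} that spans $\Sigma_\tau$. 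These properties identify the complex canonical relation as a positive almost-analytic extension of the diagonal of $\Sigma_\tau$.

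For the symbol, expanding the ansatz as $s(z,w,\sigma) \sim \sum_k \sigma^{m-1-k} s_k(z,w)$, the conditions $\dbarb \circ \widetilde{\Pi}_\tau \equiv 0$ and $\widetilde{\Pi}_\tau^{\,2} \equiv \widetilde{\Pi}_\tau$ both reduce, via integration by parts in $\sigma$ and stationary-phase in the product integral, to a recursive system of first-order transport equations along the Hamilton flow of $\psi_\tau$ restricted to the diagonal. The leading coefficient $s_0|_{\text{diag}}$ is then fixed by the normalization coming from the idempotency, namely matching the symplectic volume element of $\Sigma_\tau$ relative to $d\mu_\tau$, which also pins down the leading power $\sigma^{m-1}$ (since $\pmt$ has real dimension $2m-1$). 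Higher-order $s_k$ are solved recursively as linear first-order transport problems along the Reeb direction, guaranteed to have smooth solutions in a tubular neighborhood of the diagonal, extended off the diagonal by any almost-analytic extension.

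The principal technical obstacle is the positivity analysis for the phase and the non-standard stationary-phase arguments required to close the transport equations with a genuinely complex critical set. Verifying that $\Imp \psi_\tau$ has the correct non-degeneracy transverse to the boundary diagonal (rather than just the interior diagonal) is where the Grauert-tube geometry enters essentially through strict pseudoconvexity of $\pmt$. Given these ingredients, the resulting $\widetilde{\Pi}_\tau$ is an approximate Szeg\H o projector in the full microlocal sense, and the full Boutet de Monvel--Sj\"ostrand theorem follows by the uniqueness of parametrices for the orthogonal projection onto $\ker \dbarb$ on a strictly pseudoconvex CR manifold with $\Box_b$ of closed range.
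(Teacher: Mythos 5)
This theorem is quoted in the paper directly from Boutet de Monvel--Sj\"ostrand (their Theorem~1.5); the paper gives no proof of its own, only the citation. Your sketch is a reasonable outline of the argument in that reference: realize the projector as a complex-phase Fourier integral operator \`a la Melin--Sj\"ostrand, take the polarized defining function as phase, check the Melin--Sj\"ostrand positivity and nondegeneracy conditions, and solve for the symbol coefficients recursively from the microlocal identities $\dbarb\Pi_\tau \equiv 0$ and $\Pi_\tau^2 \equiv \Pi_\tau$.

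Two places where the sketch is lighter than what the actual proof requires. First, the transport equations in the complex-phase setting are not along a genuine real Hamilton flow but along an almost-analytic continuation of the bicharacteristic flow; making "recursive first-order transport along the Reeb direction" precise requires the almost-analytic machinery (almost-analytic phases, equivalence of phase functions modulo terms vanishing to infinite order on the real critical set), which is where most of the technical work in Melin--Sj\"ostrand lives. Second, the final step --- that the approximate projector $\widetilde\Pi_\tau$ agrees with the true orthogonal projector up to smoothing --- is not a one-line appeal to "uniqueness of parametrices." In Boutet de Monvel--Sj\"ostrand it rests on Kohn's closed-range theorem for $\Box_b$ on a compact strictly pseudoconvex CR manifold together with a separate microlocal analysis showing that any projector annihilated by $\dbarb$ from the left, by $\dbarb^*$ from the right, and reproducing CR functions must have this wavefront set; you gesture at this but the $\bar\partial$-Neumann elliptic regularity alone does not deliver it. Since the paper takes the theorem as given, none of this affects the paper, but a self-contained proof would need to fill in these points.
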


A key estimate for $\ptau$ (or equivalently for the phase function $\psi$) can be stated in terms of the \emph{Calabi diastatis function}, which is defined by
\begin{equation}\label{eqn:DIASTASIS}
D(z, w) = \ptau(z,\overline{z}) + \ptau(w, \overline{w}) - \ptau(z, \overline{w}) - \ptau(w, \overline{z}).
\end{equation}
In the closure of the Grauert tube, \cite[Corollary~1.3]{BoutetSjostrand76} gives the lower bound
\begin{align}\label{Diastasis inequality}
    D(z,w) \geq C \p[\big]{ d( z, \pmt ) + d( w, \pmt) + d(z,w)^{2}} \quad \text{for $z,w \in \overline{M_{\tau}}$}.
\end{align}

\subsection{The Toeplitz operator \texorpdfstring{$\Pi_\tau D_{\sqrt{\rho}}\Pi_\tau$}{PDP} and tempered eigenfunction sums}\label{sec:OPERATORS}
In this section, we introduce the two operators for which we compute the scaling asymptotics in \autoref{Graph Scaling Theorem} and \autoref{Graph Scaling Theorem2}.

The operator $\pdp$ is a generalized Toeplitz operator in the sense of Boutet de Monvel--Guillemin \cite{BoutetGuillemin81}. Here, \(D_{\sqrt{\rho} } = \frac{1}{i} \Xi_{\sqrt{\rho} }\) is differentiation along the Hamilton vector field of the Grauert tube function. The symbol of \(D_{\sqrt{\rho} }\) is nowhere vanishing on $\Sigma_\tau - 0$, where $\Sigma_\tau$ is the symplectic cone \eqref{eqn:SIGMATAU}. Thus, \(\Pi_\tau D_{\sqrt{\rho}}\Pi_\tau\) is elliptic and its spectrum discrete.

As discussed in \autoref{sec:FOURIERANALOGUE}, the Grauert tube analogue of Fourier coefficients of the Szeg\H{o} kernel are given by the spectral localizations
\begin{align}
    \pcl = \Pi_\tau \chi ( \Pi_\tau D_{\sqrt{\rho}} \Pi_\tau - \lambda )  = \int_{\R} \hat{\chi} (t) e^{ - it \lambda}\Pi_\tau e^{i t \Pi_\tau D_{\sqrt{\rho}}\Pi_\tau}  \, dt.
\end{align}
Here, $\hat{\chi}$ is a Schwartz function whose Fourier transform is compactly supported in some small neighborhood $[-\epsilon, \epsilon]$ of the origin, and $\widehat{\chi}(0) = 1$.

If we	denote by the pullback by the Hamilton flow \eqref{eqn:GTTAU} of $\Xi_{\sqrt{\rho}}$ on $\partial M_\tau$, then due to \cite[Proposition~5.3]{Zelditch20Husimi}, there exists a classical polyhomogeneous pseudodifferential operator \(\hat{\sigma}_{t, \tau}( w, D_{\sqrt{\rho} })\) on \(\pmt\) 
    so that
    \begin{align}\label{Dynamical Toeplitz}
        \Pi_\tau e^{i t \pdp } \sim \Pi_\tau\hat{\sigma}_{t, \tau} ( \gtc{t} )^{*} \Pi_\tau  \quad \text{modulo a smoothing Toeplitz operator}. 
    \end{align}
The symbol $\sigma_{t,\tau}$ of $\hat{\sigma}_{t,\tau}$ admits a complete asymptotic expansion
    \begin{align}\label{eqn:PDPTOEPLITZ}
        \sigma_{t, \tau}(w,r) \sim \sum_{j = 0}^{\infty}\sigma_{t, \tau, j}(w)r^{- j},
    \end{align}
in which $\sigma_{t,\tau, 0} =  \ang{e_{\Lambda_\tau^t} , e_{\Lambda_\tau}}^{-1}$ is to leading order the reciprocal of the overlap of two Gaussians, as in \eqref{eqn:OVERLAP}.

\begin{Rem}\label{rem:PCL}
It follows that
\begin{align}
\pcl(x,y) &\sim \p[\bigg]{\int_{\R} \hat{\chi} (t) e^{ - it \lambda} \Pi_\tau\hat{\sigma}_{t, \tau} ( \gtc{t} )^{*} \Pi_\tau \, dt}(x,y) \\
&= \int_\R\!\int_{\pmt} \widehat{\chi}(t) e^{-it\lambda}\Pi_\tau(x,w) \sigma_{t,\tau}(w) \Pi_\tau(\gtc{t}(w),y)\,dtdy.
\end{align}
In the proof of \autoref{Graph Scaling Theorem}, we replace the two Szeg\H{o} kernels in the expression above by the parametrices \eqref{Parametrix for Szego Kernel} and directly compute the resulting oscillatory integral in parameter $\sqrt{\lambda}$ using stationary phase.
\end{Rem}

We now introduce the tempered spectral projections kernel $\Pcl(z,w)$ constructed using analytically continued eigenfunctions. Recall the eigenequation \eqref{eqn:EIGENEQUATION} for the Laplacian on $M$. The eigenfunction expansion of the Schwartz kernel of the half-wave operator $U(t) = e^{it\sqrt{-\Delta}}$ is given by
\begin{align}\label{eqn:WAVE}
     U(t,x,y) = \sum_{j} e^{i t \lambda_j} \phi_{\lambda_j}(x) \overline{\phi_{\lambda_j}(y)}.
\end{align}

As shown in \cite{Boutet78, GuilleminStenzel92,GolseLeichtnamStenzel96,Zelditch12potential}, for $0 < \tau \le \tau_{\mathrm{max}}$, the Schwartz kernel $U(t,x,y)$ admits an analytic extension $U(t + i \tau,x,y)$ in the time variable $t \mapsto t + i\tau \in \C$, and then in the spacial variable $x \mapsto z \in M_\tau$. Let $\ocal^s(\partial M_\tau)$ denote the order $s$ Sobolev space of CR holomorphic functions on the Grauert tube boundary. Then the Poisson operator
\begin{equation}\label{eqn:POISSON}
U(i \tau) = e^{-\tau \sqrt{-\Delta}} \colon L^2(M) \to \ocal^{\frac{m-1}{4}}(\partial M_\tau)
\end{equation}
with kernel  $U(i\tau, z, y)$ is a Fourier integral operator of order $-(m-1)/4$ with complex phase associated to the canonical relation $\set{(y, \eta, \iota_\tau(y, \eta)} \subset T^*M \times \Sigma_\tau$. Here, the quantities $\iota_\tau$ and $\Sigma_\tau$ are defined in \eqref{eqn:IOTA} and \eqref{eqn:SIGMATAU}. We recall the following lemma.

\begin{Lemma}[{\cite[Lemma~8.2]{Zelditch12potential}}] \label{Unitary pseudo}
Let \(\Psi^{s}\) denote the class of psuedodifferential operators of order \(s\). Then,
\begin{enumerate}
    \item \(U(i \tau)^{*}U(i \tau) \in \Psi^{ -\frac{m - 1}{2}}\left( M \right)\) with principal symbol \(|\xi|_{g}^{ - \frac{m - 1}{2}}\).
    \item \(U(i \tau)U\left( i \tau \right)^{*} = \Pi_\tau A_{\tau} \Pi_{\tau}\) where \(A_{\tau} \in \Psi^{\frac{m - 1}{2}}\left( \pmt \right)\) has principal symbol \(|\sigma|_{g}^{\frac{m - 1}{2}}\) as a function on \(\Sigma_{\tau}\).
\end{enumerate}
\end{Lemma}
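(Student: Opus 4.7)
The plan is to derive both assertions from the Boutet de Monvel--Guillemin calculus of Hermite Fourier integral operators applied to the Poisson operator $U(i\tau)$, which is an FIO of order $-(m-1)/4$ with isotropic canonical relation
\[
\Gamma_\tau = \set{(y,\eta, \iota_\tau(y,\eta)) : (y,\eta) \in T^*M - 0} \subset T^*M \times \Sigma_\tau,
\]
where $\iota_\tau$ is the symplectic equivalence \eqref{eqn:IOTA}. Since $\iota_\tau$ is a bijection onto $\Sigma_\tau$, the composed canonical relations $\Gamma_\tau^{-1} \circ \Gamma_\tau$ and $\Gamma_\tau \circ \Gamma_\tau^{-1}$ reduce to the diagonals of $T^*M - 0$ and of $\Sigma_\tau$, respectively. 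Both compositions are therefore clean and produce operators with clean microlocal descriptions.

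For part (1), since the composed canonical relation of $U(i\tau)^* U(i\tau)$ is the diagonal of $T^*M - 0$, the operator is pseudodifferential on $M$. The leading amplitude of the Poisson kernel transported along $\Gamma_\tau$ is (up to nowhere vanishing factors) a power of $\abs{\eta}_g$ in the Guillemin--Stenzel--Boutet normalization, and the Hermite symbol composition amounts to squaring the amplitude at a single covector since $\Gamma_\tau$ is a graph; this yields the principal symbol $\abs{\xi}_g^{-(m-1)/2}$ and the order $-(m-1)/2$ claimed in the lemma. For part (2), since $\mathrm{Range}(U(i\tau)) \subset H^2(\pmt)$ we have $U(i\tau) = \Pi_\tau U(i\tau)$, and hence automatically $U(i\tau) U(i\tau)^* = \Pi_\tau U(i\tau) U(i\tau)^* \Pi_\tau$. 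The underlying canonical relation is the diagonal of $\Sigma_\tau$, so the Boutet de Monvel--Guillemin characterization of Toeplitz operators realizes the composition as $\Pi_\tau A_\tau \Pi_\tau$ for a classical pseudodifferential operator $A_\tau$ on $\pmt$. The principal symbol of $A_\tau$ on $\Sigma_\tau$ is obtained by transferring the symbol computation from part (1) under $\iota_\tau$, giving $\abs{\sigma}_g^{(m-1)/2}$; after accounting for the Toeplitz order convention (which compensates for the codimension of the isotropic cone in $T^*\pmt$), one obtains $A_\tau \in \Psi^{(m-1)/2}(\pmt)$.

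The main obstacle is executing the complex (Hermite) stationary phase required to identify the leading symbol of the Poisson kernel cleanly. Because the underlying canonical relation is isotropic rather than Lagrangian, one cannot apply the H\"ormander FIO calculus directly; instead, one must use the Boutet de Monvel--Guillemin calculus of Hermite operators, verifying the positivity of the imaginary part of the composed almost-analytic phase off the critical set and tracking the complex Maslov factors through the Gaussian integration in the transverse directions. Once this framework is in place, the order counts and symbol identifications reduce to the symplectic transfer between $T^*M - 0$ and $\Sigma_\tau$ via $\iota_\tau$ and are standard in the Boutet de Monvel--Guillemin theory.
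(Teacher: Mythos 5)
The paper does not prove this lemma; it cites \cite[Lemma~8.2]{Zelditch12potential} and uses the statement as given. Your proposal is a sketch of the argument that actually appears in Zelditch's paper (and ultimately goes back to Boutet de Monvel and Guillemin--Stenzel), so at the level of strategy it is the correct one: $U(i\tau)$ is a Hermite FIO of order $-(m-1)/4$ whose real canonical relation is the graph of $\iota_\tau$, the compositions $\Gamma_\tau^{-1}\circ\Gamma_\tau$ and $\Gamma_\tau\circ\Gamma_\tau^{-1}$ are the diagonals of $T^*M-0$ and $\Sigma_\tau$ respectively, and the pairing of the Gaussian ground state with its own conjugate is trivial, so the compositions are pseudodifferential (resp.\ Toeplitz). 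That is exactly how the cited reference proceeds.

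Where the proposal is weaker than a real proof is the symbol identification. The order count $-(m-1)/4 + (-(m-1)/4) = -(m-1)/2$ falls out of the Hermite calculus, but the lemma asserts more than ellipticity of a classical $\Psi$DO of that order: it pins the principal symbol to be exactly $\abs{\xi}_g^{-(m-1)/2}$, not merely a nonvanishing multiple of it. Your sentence ``the leading amplitude \ldots\ is (up to nowhere vanishing factors) a power of $\abs{\eta}_g$'' and ``this yields the principal symbol $\abs{\xi}_g^{-(m-1)/2}$'' conflate the order with the symbol; to get the precise power $(m-1)/4$ on the amplitude of $U(i\tau)$ one must actually carry out the stationary-phase analysis of the analytically continued half-wave kernel (or appeal to the explicit computation in Zelditch's Lemma~8.2 / Guillemin--Stenzel). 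Similarly, for part~(2) your invocation of ``the Toeplitz order convention (which compensates for the codimension of the isotropic cone in $T^*\pmt$)'' is the right idea but is not executed; in the Boutet de Monvel--Guillemin calculus the order of a Toeplitz operator $\Pi_\tau A_\tau\Pi_\tau$ is measured by the homogeneity of the symbol restricted to $\Sigma_\tau$, and verifying that this gives $+(m-1)/2$ rather than $-(m-1)/2$ requires keeping track of the half-density normalization across the symplectic equivalence $\iota_\tau$. None of these gaps makes the approach wrong --- it is the correct framework --- but since the paper offloads all of this to the reference, your write-up is at about the same level of detail as the paper itself and should be upgraded to an actual proof only by filling in those two computations.
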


To introduce the complexified spectral projection kernels, we need to further continue $U(i\tau, z,y)$ anti-holomorphically in the $y$ variable. Consider the operator
\begin{equation}
U^\C(t + 2i \tau) = U(i\tau) U(t) U(i\tau)^* \colon \ocal(\partial M_\tau) \to \ocal(\partial M_\tau)
\end{equation}
with Schwartz kernel
\begin{equation}
U^\C(t + 2i\tau, z, w) = \sum_{j=1}^\infty e^{i ( t + 2 i \tau ) \mu_j}\phi^{\C}_{\lambda_j}(z) \overline{\phi^{\C}_{\lambda_j}(w)}.
\end{equation}
Set $t = 0$, then the partial sums of the expression above becomes
\begin{equation}\label{eqn:TEMPEREDPROJ}
P_\lambda(z,w) = \sum_{j: \lambda_j \le \lambda} e^{-2\tau \lambda_j}\phi^{\C}_{\lambda_j}(z) \overline{\phi^{\C}_{\lambda_j}(w)}.
\end{equation}

To smooth out the kernel, we fix \(\epsilon > 0\) and fix \(\chi\) a positive even Schwartz function such that \(\hat{\chi} \left( 0 \right) = 1 \) and \( \supp\hat{ \chi} \subset [ - \epsilon, \epsilon]\). Define
\begin{align}\label{eqn:TEMPEREDPCL}
    \Pcl(z,w) &= \chi \ast d_\lambda P_\lambda(z,w) \sim  \int_{\R}\hat{\chi} (t) e^{ - it \lambda} U^{\C}( t + 2 i \tau, z , w) \, dt.
\end{align}
As before, if we denote by $(\gtc{t})^*$ the pullback by the Hamilton flow \eqref{eqn:GTTAU} of $\Xi_{\sqrt{\rho}}$ on $\partial M_\tau$, then \cite[Proposition~7.1]{Zelditch20Husimi} establishes the existence a classical polyhomogeneous pseudodifferential operator \(\hat{\sigma}_{t, \tau}( w, D_{\sqrt{\rho} })\) on \(\pmt\) 
    so that
    \begin{align}\label{Dynamical Toeplitz Wave group}
        U^{\C}( t + 2 i \tau ) \sim \Pi_\tau \hat{\sigma}_{t, \tau} \left( \gtc{t} \right)^{*} \Pi_\tau \quad \text{modulo a smoothing Toeplitz operator}. 
    \end{align}
The symbol $\sigma_{t,\tau}$ of $\hat{\sigma}_{t,\tau}$ admits a complete asymptotic expansion
    \begin{align}\label{eqn:WAVEGROUPTOEPLITZ}
        \sigma_{t, \tau}\left( w,r \right) \sim \sum_{j = 0}^{\infty}\sigma_{t, \tau, j}\left( w \right)r^{ - \frac{m - 1}{2} - j},
    \end{align}
in which $\sigma_{t,\tau, 0} =  \ang{e_{\Lambda_\tau^t} , e_{\Lambda_\tau}}^{-1}$ is to leading order the reciprocal of the overlap of two Gaussians, as in \eqref{eqn:OVERLAP}.

\subsection{Quantization of linear symplectic maps on Bargmann--Fock space} \label{sec:METAPLECTIC}
The proofs of our theorems involve Taylor expansions in appropriate coordinates to reduce the geometry to the model linear space, so we briefly review the metaplectic representation on Bargmann--Fock space used to quantize symplectic linear mappings. Details can be found  \cite{Daubechies80,Folland89harmonic}

The Bargmann--Fock space on \(\C^m\) is
\begin{align}
    \mathcal{H}(\C^m) = \set[\Big]{f (z) e^{- \frac{|z|^2}{2}} \in L^2(\C^m,dz) \mathrel{}\Big\vert\mathrel{} f \in \ocal(\C^m)}.
\end{align}
The reproducing Bergman kernel has the exact formula
\begin{align}
    \Pi_{\mathcal{H}}(z,{w}) = \left( 2 \pi \right)^{-m} e^{ -\frac{|z|^2}{2} - \frac{|w|^2}{2} + z\bar{w}}.
\end{align}
Let \(Sp(m,\R)\) denote the space of real symplectic matrices on \(\R^{2m} = \R^m_x \times \R^m_y\) with respect to the standard symplectic form. Then matrix multiplication \(M \in Sp(m, \R)\) in real coordinates takes the form
\begin{align}\label{eqn:MATRIX}
    M \begin{pmatrix}
    x \\ y
    \end{pmatrix} =  \begin{pmatrix}A & B \\ C & D \end{pmatrix}\begin{pmatrix}
    x \\
    y
    \end{pmatrix} = \begin{pmatrix}
    x'\\
    y'
    \end{pmatrix}.
\end{align}

We map $\R^{2m}$ into $\C^{2m}$ via \((x,y) \mapsto (x + i y, x - iy)  =: (z, \bar{z})\). Under this mapping, \eqref{eqn:MATRIX} becomes
\begin{align}\label{Symplectic complexification}
    \mathcal{M}\begin{pmatrix}
    z  \\
    \bar{z}
    \end{pmatrix} =    \begin{pmatrix}
    P & {Q} \\
    \bar{Q} & \bar{P}
    \end{pmatrix} \begin{pmatrix}
    z \\
    \bar{z}
    \end{pmatrix} =  \begin{pmatrix}
    z' \\
    \bar{z}'
    \end{pmatrix},
\end{align}
where the holomorphic component $P$ and antiholomorphic component $Q$ of the symplectic mapping are given by
\begin{align}
    \begin{pmatrix}
    P & Q \\
    \bar{Q} & \bar{P}
    \end{pmatrix} = \mathcal{W}^{-1} \begin{pmatrix}
    A & B \\
    C& D
    \end{pmatrix} \mathcal{W}, \quad \mathcal{W} = \frac{1}{\sqrt{2}} \begin{pmatrix}
    I & I \\
    -iI & iI
    \end{pmatrix}.
\end{align}
(The choice of normalization is taken so that \(\mathcal{W}\) is unitary.) The explicit formula for the holomorphic component is
\begin{align}
    P = \frac{1}{2}(A + D + i (C - B)).
\end{align}

The metaplectic representation on \(\mathcal{H}(\C^m)\) is defined by $M \mapsto \Pi_{\mathcal{H},{M}}$, the latter being a unitary operator with kernel
\begin{align}
    \Pi_{\mathcal{H},{M}}(z,w) = (\det P )^{- \frac{1}{2}} \int_{\C^m}\Pi_{\mathcal{H}}(z, \mathcal{M}v) \Pi_{\mathcal{H}}(v, w)\,dv, \label{Gaussian Integral}
\end{align}
in which we set \(\mathcal{M} v : = Pv  + Q \bar{v} \). (The ambiguity of the sign of \(\left(\det P\right) ^{- \frac{1}{2}}\) is determined by the lift to the double cover.) Explicit computations involving standard Gaussian integrals show
\begin{align}
\Pi_{\mathcal{H}, {M}}(z,w) &:= \mathcal{K_M}(z, w) e^{- \frac{|z|^2}{2} - \frac{|w|^2}{2}},\\
\mathcal{K_M}(z,w) &:= ( 2 \pi )^{-m} (\det P)^{- \frac{1}{2}} \exp \left\{ \frac{1}{2}\left(z \bar{Q}P^{-1}z + 2\bar{w}P^{-1}z - \bar{w}P^{-1} Q \bar{w} \right)\right\} .
\end{align}

The principal term of \autoref{Graph Scaling Theorem} contains the lift of \(\Pi_{\mathcal{H},M}\) to  the reduced Heisenberg group \(\mathbb{H}^{m}_{\text{red}} \cong S^1 \times \C^{m} \), which is given by
\begin{align}
    \hat{\Pi}_{\mathcal{H}, M}(\theta,z, \varphi, w)  = e^{i(\theta - \varphi)}\Pi_{\mathcal{H},M}(z,w).
\end{align}

\section{Proofs of near graph scaling asymptotics}\label{sec:PROOFSCALING}

This section is focused on proving the scaling asymptotics \autoref{Graph Scaling Theorem} and \autoref{Graph Scaling Theorem2}, with the two proofs being identical. The techniques are similar to those of \cite{ChangRabinowitz21}.

\subsection{Identities in Heisenberg coordinates}
We briefly recall the notion of Heisenberg coordinates. We point the reader to \cite{FollandStein} for detailed construction of these coordinates on any strongly pseudoconvex CR manifold,   and to \cite{ChangRabinowitz21} for the Grauert tube setting. Roughly speaking, in Heisenberg coordinates, the strongly pseudoconvex boundary $\partial M_\tau \subset M_\C$ is, to a first approximation, the Heisenberg group viewed as a Seigel domain in complex Euclidean space. More precisely, given a point \(p \in \pmt \subset M_\C\) one may use the Levi procedure as in \cite[Section~18]{FollandStein} to construct holomorphic coordinates \((z_0,z_1, \dotsc , z_{n-1}) = (z_0, z')\) on an open neighborhood \(U \subset M_\C\) such that for \(w \in U \) 
\begin{align}
    \rho(z_0,z') = - \Imp z_0 + |z'|^2 + O\left(|z_0||z'| + |z'|^3\right) \label{Heisenberg Kahler Potential}
\end{align}

We note that  $(t,z') := (\Rep z_0, z')$ constrained by $\rho(z_0,z') = 0$ provides a coordinate system on the open neighborhood \(V = U \cap \pmt\) in \(\pmt\). We will refer to both the coordinates on \(M_\C\) as well as the coordinates on \(\pmt\) as Heisenberg coordinates. Furthermore, let \(Z_0 = \frac{T}{|T|}\) where \(T\) is the characteristic vector field and \(Z_1, ...,Z_{m-1}\) denote an orthonormal frame of \(T^{1,0}\pmt\). Then in Heisenberg coordinates centered at \(p\) we have 
 \begin{align}
     Z_0 \big|_p = \frac{\partial}{\partial t} \bigg|_p, \ Z_j \big|_p = \frac{\partial}{\partial z_j} \bigg|_p \label{Vector Fields at p}
 \end{align}
  We now record several Taylor expansions in Heisenberg coordinates established by the authors in \cite{ChangRabinowitz21} that will be useful in subsequent sections. In the following \(\lambda \in \R^{ +}\) is a parameter tending to \(\infty\). We state all of the identities in rescaled form as they appear in the main argument.

\begin{Lemma}[Expansion of the rescaled phase function]\label{Lemma 4.2} Let $\psi_\tau$ be as in \eqref{eqn:BSJPHASE}. In Heisenberg coordinates centered at \(p \in \pmt\) we have
\begin{align}\label{lambdapsitau full expansion}
   i\lambda \psi_{\tau}\bigg( \bigg(\frac{\theta}{\lambda}, \frac{u}{\sqrt{\lambda} }\bigg), \frac{w}{\sqrt{\lambda}} \bigg)  = - \sqrt{\lambda} \frac{i}{2} \Rep w_0 + \tilde{R}\bigg(\frac{\theta}{\lambda}, \frac{\Rep w_0}{\sqrt{\lambda} },\frac{u}{\sqrt{\lambda} },\frac{w'}{\sqrt{\lambda} }\bigg),
\end{align}
where 
\begin{align}
    \tilde{R} = \frac{i}{2}\theta - \frac{\abs{u}^{2}}{2} - \frac{\abs{w}^{2}}{2} + u \cdot \overline{w} + \lambda Q\bigg( \frac{\theta}{\lambda}, \frac{u}{\sqrt{\lambda} } , \frac{\Rep w_0}{\sqrt{\lambda} }, \frac{w'}{\sqrt{\lambda} }  \bigg) \label{Full R_1 remainder}
\end{align}
and \(Q\) takes the form 
\begin{align}
    Q\bigg( \frac{\theta}{\lambda}, \frac{u}{\sqrt{\lambda} } , \frac{\Rep w_0}{\sqrt{\lambda} }, \frac{w'}{\sqrt{\lambda} }  \bigg) & = O\left( \frac{\abs{\Rep w_0}|u|}{\lambda} + \frac{\abs{\Rep w_0}|w'|}{\lambda} \right) + O\left( \lambda^{ - \frac{3}{2}} \right)
\end{align}

\end{Lemma}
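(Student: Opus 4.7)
The claim is essentially a Taylor expansion identity, so the plan is to polarize the Heisenberg expansion \eqref{Heisenberg Kahler Potential} of $\rho$, substitute the rescaled coordinates, impose the boundary constraint $\rho_\tau=0$ on the imaginary direction of each $z_0$, and collect powers of $\sqrt\lambda$. No parametrix information beyond the shape of $\psi_\tau$ is used.

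Polarizing \eqref{Heisenberg Kahler Potential} in $\bar w$ produces
\[
\rho_\tau(z,\bar w) \;=\; -\frac{z_0-\bar w_0}{2i} + z'\cdot\bar w' + Q(z,\bar w),
\]
where $Q$ is the analytic polarization of the error $O(|z_0||z'|+|z'|^3)$ and recovers it on the diagonal. Since $i\psi_\tau=\rho_\tau$, I multiply through by $\lambda$ and substitute. At the first argument $(\theta/\lambda, u/\sqrt\lambda)\in\pmt$, the boundary equation forces $\Imp z_0 = |u|^2/\lambda + O(\lambda^{-3/2})$, so $\lambda\cdot(-z_0/(2i)) = \tfrac{i}{2}\theta - \tfrac{1}{2}|u|^2 + O(\lambda^{-1/2})$. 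Applying the same procedure to the second argument $w/\sqrt\lambda$, with $\Rep w_0$ a free parameter and $\Imp w_0$ determined (to order $|w'|^2/\sqrt\lambda$) by the boundary constraint, gives $\lambda\cdot(\bar w_0/(2i)) = -\sqrt\lambda\,\tfrac{i}{2}\Rep w_0 - \tfrac{1}{2}|w'|^2 + O(\lambda^{-1/2})$. The cross piece contributes $\lambda\, z'\cdot\bar w' = u\cdot\bar w$. Isolating the single $\sqrt\lambda$-oscillatory term yields exactly the stated $\tilde R$, modulo the $\lambda Q$ contribution still to be analyzed.

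What remains is verifying that $\lambda Q$ satisfies the stated size. The crucial observation is that every monomial in the Heisenberg error in \eqref{Heisenberg Kahler Potential} carries at least one transverse $z'$ factor, so every mixed monomial in the polarization $Q$ of the form $z_0^a\bar w_0^b\cdots$ with $a+b\geq 1$ is paired with at least one $z'$ or $\bar w'$. Under the parabolic rescaling, the dominant polarized monomials $\bar w_0 z'$ and the cubic monomials involving a single $\bar w'$ scale like $\lambda^{-1}$, giving $\lambda Q = O(|\Rep w_0|(|u|+|w'|)) + O(\lambda^{-1/2})$, which matches the claimed bound on $Q$ after dividing by $\lambda$.

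The main obstacle is precisely this monomial enumeration: one must verify that no pure $\bar w_0$ (or $\bar w_0^2$, etc.) contribution survives, since such a term would scale as $\sqrt\lambda$ (or worse) after multiplication by $\lambda$ and destroy the expansion. This vanishing is a structural consequence of the fact that the transverse direction $z'$ must appear in every error term of \eqref{Heisenberg Kahler Potential}, together with the fact that the $\Imp w_0$ correction coming from the boundary constraint is $O(\lambda^{-1/2})$ and therefore absorbs into the $O(\lambda^{-3/2})$ remainder of $Q$. Once this structural point is confirmed, the claimed $C^j$ estimate on the remainder $R_N$ follows by iterating the expansion for as many orders as desired.
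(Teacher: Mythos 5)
Your proof is correct and takes essentially the same route as the paper's: polarize the Heisenberg expansion \eqref{Heisenberg Kahler Potential} in $\bar w$, impose the boundary constraint to express $\Imp z_0$ and $\Imp w_0$ in terms of the transverse variables, substitute the parabolic rescaling, and count powers of $\sqrt\lambda$. The one point you make explicit — that every monomial in the error of \eqref{Heisenberg Kahler Potential} carries a transverse factor, so no pure $\bar w_0^k$ ($k \geq 2$) term survives polarization and the scaling does not produce a stray $(\Rep w_0)^2$ contribution — is exactly what makes the power-counting close, and the paper handles it implicitly by citing the remainder decomposition from \cite[Lemma~3.4]{ChangRabinowitz21} rather than re-deriving it.
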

\begin{proof}
This is a special case of the computation immediately following \cite[Remark~4.6]{ChangRabinowitz21}. Briefly, let $\phi_\tau = i\psi_\tau$ be the defining function \eqref{eqn:DEFININGFUNCTION} obtained by polarizing \eqref{Heisenberg Kahler Potential}. Then, by \cite[Lemma~3.4]{ChangRabinowitz21}, in Heisenberg coordinates we may write
    \begin{align}
   i  \psi_{\tau}(z, \overline{w} ) = \phi_{\tau}(z, \overline{w} ) = \frac{i}{2}( z_0 - \overline{w}_0  ) + \sum_{j = 1}^{m - 1} z_j \overline{w}_j + R( z ,\overline{w}).
\end{align}

Here, the remainder term $R$ may be written as
\begin{align}
    R( z, \overline{w} ) =R_2(z_0, \overline{w}_0 , z', \overline{w}' ) +R_2(z_0, \overline{w}_0) +  R^{}_3(z', \overline{w}' ),
\end{align}
where  \(R_{2}( z_0, \overline{w}_0, z', \overline{w}'  )\) only contains terms of the form $z_0^{\alpha} \overline{w}'^{\beta}$ and $\overline{w_0}^{\alpha} z'^{\beta}$ with \(|\alpha| + |\beta| \geq 2\). Similarly, \(R_2 ( z_0, \overline{w}_0  )\) (resp.~\(R_{3}( z', \overline{w}'  )\)) only contains terms of the form \(z_0^{\alpha} \overline{w}_0^{\beta} \)  with \(|\alpha| + |\beta| \geq 2\) (resp.~terms of the form \(z'^{\alpha} \overline{w'}^{\beta} \) with \( |\alpha| + |\beta| \geq 3\)).

Keeping track of the powers of $\sqrt{\lambda}$ under parabolic rescaling results in the statement of the lemma.
\end{proof}

To prove our scaling asymptotics we will simultaneously be working with two sets of Heisenberg coordinate systems, one centered at \(p\) and another centered at \(\gtc{s}(p)\). We recall that \(\gtc{s}\) is the Hamiltonian flow of the characteristic vector field  which also preserves \(T^{1,0} \pmt  \oplus T^{0,1}\pmt \). Its derivative \(D\gtc{s}\) is a linear map \(T_p \pmt \to T_{\gtc{s}(p)} \pmt \). With respect to Heisenberg coordinates \eqref{Vector Fields at p} at $p$ and $\gtc{s}(p)$, we have
\begin{align}
    D\gtc{s} & = \begin{pmatrix} 1 & 0\\
   0 & M_s
 \end{pmatrix} \quad \text{with} \quad M_s \in Sp(m-1,\R).
\end{align}
We denote its complexification by \(\mathcal{M}_s\) as in \eqref{Symplectic complexification} and use the same notation $P,Q$ for its holomorphic and anti-holomorphic components. We have the following identities for Heisenberg coordinates centered at \(\gtc{s}(p)\).

\begin{Lemma}[Expansion of the rescaled geodesic flow]\label{lem:TAYLORFLOW}
Let \(w = (w_0,w')\) be a point in a Heisenberg coordinate chart centered at \(p \in \pmt\). Then, in Heisenberg coordinates centered at \(\gtc{s}(p) \in \pmt \), we have  
\begin{multline}
    \gtc{s + \frac{r}{\sqrt{\lambda}}}\left(\frac{w}{\sqrt{\lambda}}\right) = \left( \frac{\Rep w_0}{\sqrt{\lambda}}+ \frac{2 \tau r}{\sqrt{\lambda}} +  O\left(\frac{|r|}{\lambda}\right) + O\left(\lambda^{-3/2}\right)\right. ,\\
    \left.\frac{\mathcal{M}_s w'}{\sqrt{\lambda}}  + O \left( \frac{|r|}{\lambda}\right) + O\left(\lambda^{-3/2} \right)\right).
\end{multline}
\end{Lemma}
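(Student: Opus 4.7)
The plan is to decompose the composite flow as $\gtc{s + r/\sqrt{\lambda}} = \gtc{r/\sqrt{\lambda}} \circ \gtc{s}$ and handle each factor in its own Heisenberg chart, then glue the two pieces by Taylor expansion in $1/\sqrt{\lambda}$. First I would expand $\gtc{s}(w/\sqrt{\lambda})$ read in the chart centered at $\gtc{s}(p)$. Because the source chart is centered at $p$ and the target chart at $\gtc{s}(p)$, the map $\gtc{s}$ sends origin to origin, and its linearization at the origin is precisely the matrix $D\gtc{s}$ from \eqref{eqn:SYMPMAT}, namely the block $\begin{pmatrix}1 & 0 \\ 0 & M_s\end{pmatrix}$; complexifying produces $\begin{pmatrix}1 & 0 \\ 0 & \mathcal{M}_s\end{pmatrix}$ on the $(t, z', \overline{z'})$ variables. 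Substituting $w/\sqrt{\lambda}$ for $w$ gives the linear contribution $(\Rep w_0, \mathcal{M}_s w')/\sqrt{\lambda}$ together with a higher-order Taylor remainder in the entries of $w/\sqrt{\lambda}$.

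Second, I would analyze the short-time factor $\gtc{r/\sqrt{\lambda}}$ applied to $\gtc{s}(w/\sqrt{\lambda})$. At the origin of the Heisenberg chart at $\gtc{s}(p)$, the generator of the flow is $\Xi_{\sqrt{\rho}} = 2\tau\, \partial/\partial t$ (the factor $2\tau$ coming from $\Xi_{\sqrt{\rho}} = (2\sqrt{\rho})^{-1}\Xi_{\rho}$ combined with the normal form \eqref{Heisenberg Kahler Potential}), so a first-order time expansion gives $\gtc{r/\sqrt{\lambda}}(y) = y + (2\tau r/\sqrt{\lambda}) e_t + O(r^2/\lambda)$ for $y$ near the origin. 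Evaluating at $y = \gtc{s}(w/\sqrt{\lambda}) = O(\lambda^{-1/2})$ introduces cross terms of size $O(|r|/\lambda)$ from the spatial variation $\Xi_{\sqrt{\rho}}(y) - \Xi_{\sqrt{\rho}}(0) = O(|y|)$ and a further $O(r^2/\lambda)$ contribution from the second time derivative. Composing the two expansions yields the claimed formula.

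The main obstacle I anticipate is controlling the $r$-independent remainder at the sharper rate $O(\lambda^{-3/2})$ rather than the naive $O(\lambda^{-1})$ that would come from a generic quadratic Taylor term of $\gtc{s}$ in $w$. This refinement must be extracted from the specific normal-form structure of the Heisenberg coordinates built by the Levi procedure: the expansion \eqref{Heisenberg Kahler Potential} of $\rho$ contains no quadratic $|z_0|^2$, $|z_0||z'|$, or $|z'|^2 \Rep z_0$ terms, and the higher-order structure \emph{on the boundary} $\pmt$ is correspondingly constrained. Through the variational equation $\partial_t D_w \gtc{t}(w) = D\Xi_{\sqrt{\rho}}(\gtc{t}(w)) \cdot D_w \gtc{t}(w)$ together with the expansion of $\Xi_{\sqrt{\rho}}$ in Heisenberg coordinates, one can show that the relevant second-order Taylor coefficient $\partial_w^2 \gtc{s}(0)$ in the $(t,z',\overline{z'})$ directions is absorbed into the third-order remainder, which contributes only $O(|w|^3/\lambda^{3/2})$ after rescaling. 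Once this computation is done, the remaining bookkeeping to combine the two expansions and package the error into the stated $O(|r|/\lambda) + O(\lambda^{-3/2})$ form is routine, closely following the analogous computation appearing in \cite{ChangRabinowitz21}.
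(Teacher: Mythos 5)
Your splitting $\gtc{s+\frac{r}{\sqrt\lambda}} = \gtc{\frac{r}{\sqrt\lambda}}\circ\gtc{s}$, with the large-time factor read through its linearization $D\gtc{s}$ in the two Heisenberg charts (producing the $\mathcal{M}_s$ block via \eqref{eqn:SYMPMAT}) and the short-time factor read through a first-order Taylor expansion of the Reeb flow (producing the $2\tau r/\sqrt\lambda$ shift and the $O(|r|/\lambda)$, $O(r^2/\lambda)$ remainders), is the same route the paper takes: its one-sentence proof cites exactly this short-time expansion $\gtc{t}(z_0,z') = (z_0 + 2\tau t + t\cdot O^1 + O(t^2),\, z' + t\cdot O^1 + O(t^2))$ from \cite{ChangRabinowitz21}, leaving the $\gtc{s}$ linearization implicit. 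Your derivation of the $2\tau$ coefficient and the $r$-dependent error bookkeeping are correct.

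The genuine gap is the one you yourself flag and then wave off: the $r$-independent remainder from the Taylor expansion of $\gtc{s}$ at the origin, which is a priori $O(|w|^2/\lambda) = O(\lambda^{-1})$ for bounded $|w|$, not the stated $O(\lambda^{-3/2})$. You claim this quadratic Taylor coefficient $\partial^2_w\gtc{s}(0)$ is ``absorbed into the third-order remainder'' by the Levi normal-form structure, but \eqref{Heisenberg Kahler Potential} only normalizes the second-order jet of the defining function $\rho$, not the flow map, and a generic contactomorphism sending origin to origin with prescribed differential does not have vanishing second differential. This half-power is not cosmetic: in the combined-expansion lemma that follows, the $\Rep z_0$-coordinate of the flow point is multiplied by $\lambda$ in the phase $i\lambda\psi_\tau$, so an $O(\lambda^{-1})$ remainder would inject an $O(|w'|^2)$ quadratic form into the exponent of the Gaussian integral \eqref{Gaussian Integral 2} after stationary phase in $(\Rep w_0, r, \sigma_1, \sigma_2)$, thereby altering the Bargmann--Fock leading term of \autoref{Graph Scaling Theorem}. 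Either the variational computation you sketch needs to be carried out explicitly (showing the relevant component of $\partial^2_w\gtc{s}(0)$ vanishes, or that its contribution enters only at the next half-power), or one must verify that the downstream stationary-phase argument is actually insensitive to an $O(\lambda^{-1})$ coordinate error. Dismissing this as ``routine bookkeeping'' leaves precisely the substantive step of the lemma unproved.
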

\begin{proof}
  This follows from the Taylor expansion $ \gtc{t}(z_0,z') = \p{ z_0 + 2 \tau t + t \cdot O^1 + O( t^{2} ), 	z' + t \cdot O^1 + O( t^{2})}$ of \cite[Lemma~3.6]{ChangRabinowitz21}.
\end{proof}

\begin{Lemma}[Combined expansion of the rescaled phase and flow]
In Heisenberg coordinates centered at \( \gtc{s}(p) \in \pmt\) we have
\begin{equation}\label{lambdapsitau full expansion 1}
\begin{aligned}
   i \lambda \psi_{\tau}\bigg(\gtc{s + \frac{r}{\sqrt{\lambda}}}\left( \frac{w}{\sqrt{\lambda}}\right),\bigg( \frac{\phi}{\lambda}, \frac{v}{\sqrt{\lambda} } \bigg)\bigg) &= \sqrt{\lambda} \frac{i}{2}\left( \Rep w_0 + 2 \tau r \right) \\
	&\quad + \tilde{S}\bigg( \frac{\varphi}{\lambda},  \frac{\Rep w_0}{\sqrt{\lambda} }, \frac{r}{\sqrt{\lambda} }, \frac{v}{\sqrt{\lambda} }, \frac{w'}{\sqrt{\lambda} } \bigg),
 \end{aligned}
\end{equation}
 where
 \begin{align}
     \tilde{S} = - \frac{i}{2}\varphi - \frac{\abs{v}^{2}}{2} - \frac{ \abs{\mathcal{M}_s  w}^{2}}{2} + \overline{v} \cdot (\mathcal{M}_sw) + \lambda  T\bigg( \frac{\varphi}{\lambda}, \frac{v}{\sqrt{\lambda} },   \frac{r}{\sqrt{\lambda} }, \frac{\Rep w_0}{\sqrt{\lambda} }, \frac{w'}{\sqrt{\lambda} } \bigg) \label{Full S_1 remainder}
 \end{align}
and \(T\) takes the form
\begin{align}
      T\bigg( \frac{\varphi}{\lambda}, \frac{v}{\sqrt{\lambda} },   \frac{r}{\sqrt{\lambda} }, \frac{\Rep w_0}{\sqrt{\lambda} }, \frac{w'}{\sqrt{\lambda} } \bigg)  & = O\left( \frac{\abs{r}\abs{v}}{\lambda} + \frac{\abs{r}\abs{w'}}{\lambda} + \frac{\abs{\Rep w_0}\abs{v}}{\lambda} + \frac{\abs{\Rep w_0} \abs{w'}}{\lambda} \right) + O\left( \lambda^{ - \frac{3}{2}} \right)
\end{align}
\end{Lemma}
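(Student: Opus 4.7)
The strategy is to feed the Taylor expansion of the rescaled geodesic flow (\autoref{lem:TAYLORFLOW}) into the phase expansion (\autoref{Lemma 4.2}), with the latter applied in Heisenberg coordinates centered at \(\gtc{s}(p) \in \pmt\). First, I would exploit the identity \(\psi_\tau(z,w) = -\overline{\psi_\tau(w,z)}\), which follows from \eqref{eqn:BSJPHASE} and the fact that \(\ptau(z,\overline{w})\) is obtained by polarization from \(\ptau(z) = \ptau(z,\overline{z})\); equivalently \(i\lambda \psi_\tau(z,w) = \overline{i\lambda \psi_\tau(w,z)}\). This permits \autoref{Lemma 4.2} to be applied with the arguments reversed, so that the ``base point'' role in \((\theta/\lambda, u/\sqrt{\lambda})\) is played by \((\varphi/\lambda, v/\sqrt{\lambda})\) and the ``rescaled variable'' role in \(w/\sqrt{\lambda}\) is played by the flow image. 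The subsequent conjugation supplies the overall sign flip needed to produce \(+\sqrt{\lambda}\tfrac{i}{2}(\Rep w_0 + 2\tau r)\) at the principal scale.

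Second, by \autoref{lem:TAYLORFLOW} the flow image, viewed in Heisenberg coordinates at \(\gtc{s}(p)\), has the form \(\tilde w/\sqrt{\lambda}\) with
\begin{align}
\tilde w_0 = \Rep w_0 + 2\tau r + \sqrt{\lambda}\,\epsilon_A, \qquad \tilde w' = \mathcal{M}_s w' + \sqrt{\lambda}\,\epsilon_B,
\end{align}
where \(\epsilon_A, \epsilon_B = O(|r|/\lambda) + O(\lambda^{-3/2})\). Substituting \(\tilde w\) into \autoref{Lemma 4.2} and taking the conjugate of both sides yields
\begin{align}
i\lambda \psi_\tau\bigl(\tilde w/\sqrt{\lambda},\,(\varphi/\lambda, v/\sqrt{\lambda})\bigr) = \sqrt{\lambda}\,\tfrac{i}{2}\Rep \tilde w_0 + \overline{\tilde R}\bigl(\tfrac{\varphi}{\lambda}, \tfrac{\Rep \tilde w_0}{\sqrt{\lambda}}, \tfrac{v}{\sqrt{\lambda}}, \tfrac{\tilde w'}{\sqrt{\lambda}}\bigr).
\end{align}
Expanding the principal term \(\sqrt{\lambda}\tfrac{i}{2}\Rep\tilde w_0 = \sqrt{\lambda}\tfrac{i}{2}(\Rep w_0 + 2\tau r) + \lambda\tfrac{i}{2}\Rep\epsilon_A\) isolates the claimed leading contribution. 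Expanding the quadratic part \(-|v|^2/2 - |\tilde w|^2/2 + \bar v\cdot \tilde w\) of \(\overline{\tilde R}\) via \(\tilde w' = \mathcal{M}_s w' + \sqrt{\lambda}\,\epsilon_B\) reproduces the quadratic piece \(-|v|^2/2 - |\mathcal{M}_s w|^2/2 + \bar v\cdot(\mathcal{M}_s w)\) of \(\tilde S\), modulo cross-products and self-products of \(\epsilon_A, \epsilon_B\).

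Third, all residual contributions---namely the \(\lambda\tfrac{i}{2}\Rep\epsilon_A\) sub-principal piece, the \(\lambda\overline{Q}\) remainder from \autoref{Lemma 4.2}, and the cross-products of \(\epsilon_A, \epsilon_B\) with \(v, \Rep w_0, w'\)---are collected into the claimed remainder \(\lambda T\). A direct bookkeeping using \(\epsilon_A, \epsilon_B = O(|r|/\lambda) + O(\lambda^{-3/2})\) and the stated size of \(Q\) from \autoref{Lemma 4.2} then shows every such term lies in one of the four product categories \(|r||v|/\lambda,\ |r||w'|/\lambda,\ |\Rep w_0||v|/\lambda,\ |\Rep w_0||w'|/\lambda\), modulo \(O(\lambda^{-3/2})\).

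The main obstacle is precisely this final bookkeeping: one must use that the \(O(|r|/\lambda)\) errors in \autoref{lem:TAYLORFLOW} are actually of the refined bilinear form \(O(|r||w|/\lambda)\), so that the residue \(\lambda\Rep\epsilon_A\) does not produce pure-\(|r|\) contributions absent from the listed form of \(T\). This refinement reflects the vanishing of the pure-time cross-Taylor coefficients of \(\gtc{t}(z)\) at \((t,z)=(0,0)\), as recorded in \cite[Lemma~3.6]{ChangRabinowitz21}. With this refinement in hand the accounting closes cleanly, and the asserted polynomial-in-\(1/\sqrt{\lambda}\) expansion structure of the claim follows by carrying the Taylor expansions to higher order.
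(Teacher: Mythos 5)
Your proposal takes essentially the same route as the paper, which proves the lemma as a direct consequence of \autoref{Lemma 4.2} (the rescaled phase expansion) and \autoref{lem:TAYLORFLOW} (the rescaled flow expansion). The paper's own proof is a single sentence asserting that combination; your write-up supplies the bookkeeping left implicit there, including the argument swap via the anti-symmetry $\psi_\tau(z,w)=-\overline{\psi_\tau(w,z)}$ and the correct observation that the flow-expansion errors must be taken in their refined bilinear form coming from \cite[Lemma~3.6]{ChangRabinowitz21} in order to match the stated shape of $T$.
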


\begin{proof}
This follows from the\autoref{Lemma 4.2} and \autoref{lem:TAYLORFLOW}.
\end{proof}

\subsection{Proof of \autoref{Graph Scaling Theorem}: asymptotic expansion for \texorpdfstring{$\pcl$}{}}
 Fix \(p \in \pmt\) and let \(( \theta, u ), ( \varphi, v) \in \partial M_\tau\) be two points in Heisenberg coordinates centered at \(p\) and \(\gtc{s}(p)\) respectively. Then, as discussed in \autoref{rem:PCL}, substituting the parametrix \eqref{Parametrix for Szego Kernel} for each instance of $\Pi_\tau$ above and composing the resulting kernels, we arrive at the oscillatory integral representation
\begin{multline}\label{eqn:OSCINTORIGINAL}
    \Pi_{\chi, \lambda}\bigg( p + \left( \frac{\theta}{\lambda}, \frac{u}{\sqrt{\lambda} } \right), \gtc{s}(p) + \left(\frac{\varphi}{\lambda}, \frac{v}{\sqrt{\lambda} } \right) \bigg) \\
    \sim \int_{\R \times \partial M_\tau \times \R^+ \times \R^+} e^{i \lambda \Psi} A \,\quaddifferential,
\end{multline}
in which the phase $\Psi$ and the amplitude $A$ are given by
\begin{equation}\label{eqn:AMPA}
\begin{aligned}
\Psi &= -t + \frac{1}{\lambda} \sigma_2 \psi_\tau\bigg( p + \bigg( \frac{\theta}{\lambda}, \frac{u}{\sqrt{\lambda} } \bigg), w \bigg) + \frac{1}{\lambda} \sigma_1   \psi_{\tau}\bigg( \gtc{t}(w), \gtc{s}(p) + \bigg( \frac{\varphi}{\lambda}, \frac{v}{\sqrt{\lambda} } \bigg) \bigg),\\
A &= \hat{\chi}(t) s\bigg( p + \left( \tl, \ul \right), w, \sigma_2 \bigg)s\bigg( \gtc{t}(w), \gtc{t}(p) + \left(\phil, \vl\right), \sigma_1 \bigg) \sigma_{t, \tau}(w, \sigma_1  ).
\end{aligned}
\end{equation}
From now on, we suppress \(p \) and  \(\gtc{s}(p)\) from the notation, keeping in mind that they are the origin in each of their respective coordinates.
Make the change-of-variables \(\sigma_j \mapsto \lambda \sigma_j\). Homogeneity of the symbols implies
\begin{align}\label{Partial Szego Kernel Scaled}
    \Pi_{\chi, \lambda}\bigg( \frac{\theta}{\lambda}, \frac{u}{\sqrt{\lambda} }, \frac{\varphi}{\lambda}, \frac{v}{\sqrt{\lambda} } \bigg) \sim \lambda^{2m}\int_{\R \times \partial M_\tau \times \R^+ \times \R^+} e^{i \lambda \tilde{\Psi}} \tilde{A} \,\quaddifferential,
\end{align}
in which the phase $\tilde{\Psi}$ and the amplitude $\tilde{A}$ are given by
\begin{equation}\label{eqn:AMPANDPHASE}
\begin{aligned}
\tilde{\Psi} &= -t +  \sigma_2 \psi_\tau\bigg( \bigg( \frac{\theta}{\lambda}, \frac{u}{\sqrt{\lambda} } \bigg), w \bigg) + \sigma_1   \psi_{\tau}\bigg( \gtc{t}(w), \bigg( \frac{\varphi}{\lambda}, \frac{v}{\sqrt{\lambda} } \bigg) \bigg),\\
\tilde{A} &= \lambda^{-2m} A.
\end{aligned}
\end{equation}

We begin by localizing in \((w,t) \in \partial M_\tau \times \R\). Fix $C > 0$ and \(0 < \delta < \frac{1}{2}\). Set
\begin{equation}\label{eqn:CUTOFF}
\begin{aligned}
    V_{\lambda} & = \set*{(w,t) : \textstyle  \max\Big\{ d\big(w, \big(\frac{\theta}{\lambda}, \frac{u}{\sqrt{\lambda} }\big) \big), d\big( \gtc{t}(w), \big( \frac{\varphi}{\lambda}, \frac{v}{\sqrt{\lambda} } \big) \big) \Big\}< \frac{2C}{3}\lambda^{\delta - \frac{1}{2}}},\\
    W_{\lambda} & = \set*{(w,t)  : \textstyle \max \Big\{d\big(w, \big(\frac{\theta}{\lambda}, \frac{u}{\sqrt{\lambda} } \big)\big), d\big( \gtc{t}(w), \big( \frac{\varphi}{\lambda}, \frac{v}{\sqrt{\lambda} } \big) \big)\Big\}  > \frac{C}{2}\lambda^{\delta - \frac{1}{2}}}.
\end{aligned}
\end{equation}
Let \(\{\varrho_{\lambda}, 1 - \varrho_{\lambda}\} \) be a partition of unity subordinate to the cover $\{V_\lambda, W_\lambda\}$ and decompose the integral \eqref{Partial Szego Kernel Scaled} into
\begin{align}
     \Pi_{\chi, \lambda}\bigg( \frac{\theta}{\lambda}, \frac{u}{\sqrt{\lambda} }, \frac{\varphi}{\lambda}, \frac{v}{\sqrt{\lambda} } \bigg) & \sim I_1 + I_2,\\
		I_1 &= \lambda^{2m}\int e^{i \lambda \tilde{\Psi}}  \varrho_{\lambda}( t,w )  \tilde{A} \, \quaddifferential ,\\
		I_2 &= \lambda^{2m}\int e^{i \lambda \tilde{\Psi}} ( 1 - \varrho_{\lambda}( t,w ) ) \tilde{A} \, \quaddifferential.
\end{align}

\begin{Lemma}\label{Localization in time and space}
    We have $I_2 =  O( \lambda^{ - \infty})$.
\end{Lemma}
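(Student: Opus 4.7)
The plan is to exploit the positivity of $\Imp \psi_\tau$ coming from the Calabi diastasis inequality \eqref{Diastasis inequality}, together with repeated integration by parts in the $\sigma_j$ variables. Since $\psi_\tau = \ptau/i$ and $\ptau$ vanishes on $\pmt$, a direct computation using \eqref{eqn:DIASTASIS} gives $\Imp \psi_\tau(z,w) = \tfrac{1}{2}D(z,w)$ for $z,w \in \pmt$, so \eqref{Diastasis inequality} yields $\Imp \psi_\tau(z,w) \geq c\,d(z,w)^{2}$ on the boundary. Consequently, on the support of $1-\varrho_\lambda$, where at least one of the two distances in \eqref{eqn:CUTOFF} exceeds $\tfrac{C}{2}\lambda^{\delta-1/2}$, the corresponding evaluation of $\Imp\psi_\tau$ is bounded below by $c\lambda^{2\delta-1}$.

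I would first decompose $W_\lambda = W_\lambda^{(1)} \cup W_\lambda^{(2)}$ according to which distance is large. On $W_\lambda^{(2)}$, where $|\psi_\tau(\cdot,w)| \geq c\lambda^{2\delta-1}$, I integrate by parts in $\sigma_2$ via the identity $e^{i\lambda \tilde\Psi} = (i\lambda \partial_{\sigma_2}\tilde\Psi)^{-1}\,\partial_{\sigma_2} e^{i\lambda\tilde\Psi}$, noting that $\partial_{\sigma_2} \tilde\Psi = \psi_\tau(\cdot, w)$ is \emph{independent} of $\sigma_2$. Each integration by parts multiplies the integrand by a factor of modulus at most $C\lambda^{-1}|\psi_\tau|^{-1} \leq C\lambda^{-2\delta}$ and differentiates only the amplitude in $\sigma_2$. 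After $N$ iterations, combined with the $\lambda^{2m}$ prefactor from \eqref{Partial Szego Kernel Scaled} and the surviving exponential damping $e^{-c\sigma_2 \lambda^{2\delta}}$ taming the polynomial growth of the symbol factors in $\sigma_1,\sigma_2$, one obtains a bound of order $\lambda^{2m - 2N\delta}$, which is $O(\lambda^{-\infty})$ after choosing $N$ large. The piece $W_\lambda^{(1)}$ is handled symmetrically by integrating by parts in $\sigma_1$, using that $\partial_{\sigma_1}\tilde\Psi = \psi_\tau(\gtc{t}(w),\cdot)$ is likewise independent of $\sigma_1$.

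The two routine technical wrinkles are the boundary terms at $\sigma_j = 0$ and $\sigma_j = +\infty$. Convergence and vanishing at $\sigma_j = +\infty$ come automatically from the exponential damping $e^{-c\sigma_j \lambda^{2\delta}}$, which overwhelms the polynomial growth of $s$ from \eqref{Symbol Expansion BDM}. The boundary at $\sigma_j = 0$ is handled by replacing $s(z,w,\sigma)$ with $\eta(\sigma)\,s(z,w,\sigma)$, where $\eta$ is a smooth cutoff vanishing near $0$ and equal to $1$ for $\sigma \geq 1$; this modifies $\Pi_\tau$ by a smoothing kernel and so contributes to $\pcl$ only an $O(\lambda^{-\infty})$ smooth term. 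The main (but still routine) issue is to keep all constants uniform as $p$ and $s$ vary in compact sets, and as $t$ varies in $\supp\hat\chi$; this is immediate because the phase $\tilde\Psi$ and amplitude $\tilde A$ depend smoothly on these parameters and the lower bound on $|\psi_\tau|$ used above depends only on the single scale $\lambda^{\delta-1/2}$ cut out by $\varrho_\lambda$.
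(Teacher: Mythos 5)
Your proof is correct and follows essentially the same strategy as the paper's: use the Calabi diastasis inequality \eqref{Diastasis inequality} to get a lower bound of order $\lambda^{2\delta-1}$ on $\lvert\partial_{\sigma_1}\tilde\Psi\rvert$ or $\lvert\partial_{\sigma_2}\tilde\Psi\rvert$ on $\operatorname{supp}(1-\varrho_\lambda)$, then integrate by parts repeatedly in the corresponding $\sigma_j$. You also supply some routine details the paper elides, such as the identity $\Imp\psi_\tau=\tfrac12 D$ on the boundary, the explicit two-piece decomposition of $W_\lambda$, the exponential damping controlling the $\sigma_j\to\infty$ ends, and the cutoff handling the endpoint $\sigma_j=0$.
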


\begin{proof}
    By definition of $W_\lambda$, on the support of \(1 - \varrho_{\lambda}\) either
    \begin{align}
       \abs{d_{\sigma_2} \tilde{\Psi}} &=  \abs*{\psi_{\tau} \bigg(  \bigg( \frac{\theta}{\lambda}, \frac{u}{\sqrt{\lambda} } \bigg),w \bigg)} \geq 2 D\bigg(\bigg( \frac{\theta}{\lambda}, \frac{u}{\sqrt{\lambda} } \bigg), w \bigg) \geq C' \lambda^{2\delta - 1}
			\end{align}
			or
			\begin{align}
       \abs{d_{\sigma_1} \tilde{\Psi}} &=  \abs*{\psi_{\tau} \bigg(  \bigg( \gtc{t}(w),\frac{\phi}{\lambda}, \frac{v}{\sqrt{\lambda} } \bigg) \bigg)} \geq 2 D\bigg(\bigg( \gtc{t}(w), \frac{\phi}{\lambda}, \frac{v}{\sqrt{\lambda} } \bigg)\bigg) \geq C' \lambda^{2 \delta - 1}
     \end{align}
where \(D\) is the Calabi diastasis \eqref{eqn:DIASTASIS} and the inequalities follow from \eqref{Diastasis inequality}. Repeated integration by parts in \(\sigma_1\) or \(\sigma_2\) as appropriate completes the proof.
\end{proof}

In preparation for stationary phase we make the following change of variables 
\begin{align}\label{eqn:RESCALE}
t \mapsto s + \frac{r}{\sqrt{\lambda}} \quad \text{and} \quad \text{$(\Rep w_0, w') \mapsto \p[\bigg]{\frac{\Rep w_0}{\sqrt{\lambda}}, \frac{w'}{\sqrt{\lambda}}}$}.
\end{align}
Substituting in our formulas from lemmas 3.2 and 3.4 we obtain the following oscillatory integral with parameter \(\sqrt{\lambda}\). 

\begin{align}
    \pcl \left(\frac{\theta}{\lambda}, \frac{u}{\sqrt{\lambda}}, \frac{\phi}{\lambda}, \frac{v}{\sqrt{\lambda}}\right) \sim e^{- i s \lambda} \lambda^m \int e^{i \lambda \tilde{\Psi}} \tilde{A} d \sigma_1 d \sigma_2 dw dr \label{OSCIINT2}
\end{align} 
where
\begin{equation}\label{eqn:AMPPHASE2}
\begin{aligned}
\tilde{\Psi} &= - r - \frac{\sigma_2}{2}\Rep w_0 +\frac{\sigma_1}{2}(\Rep w_0 + 2 \tau r ), \\
\tilde{A} &= e^{\sigma_2\tilde{R} + \sigma_1 \tilde{S}}  \varrho_{\lambda} \tilde{A}\bigg( \frac{\theta}{\lambda} , \frac{u}{\sqrt{\lambda} },\frac{\varphi}{\lambda},  \frac{v}{\sqrt{\lambda} } ,  \frac{\Rep w_0}{\sqrt{\lambda} }, \frac{w'}{\sqrt{\lambda} } ,\frac{r}{\sqrt{\lambda} } ,    \sigma_1, \sigma_2  \bigg) J \bigg( \frac{w'}{\sqrt{\lambda}} \bigg),
\end{aligned}
\end{equation}
with $J(\,\cdot\,)$ the volume density in Heisenberg coordinates.

We may further localize this integral in the \(\sigma_1, \sigma_2\) variables. Let \(\set{\eta, 1 - \eta}\) be a partition of unity subordinate to the cover
\begin{equation}
\set*{(\sigma_1, \sigma_2) : 0 < \sigma_1, \sigma_2  < \frac{2}{\tau}} \quad \text{and} \quad \set*{(\sigma_1, \sigma_2) : \sigma_1, \sigma_2 > \frac{3}{2 \tau} }.
\end{equation}
Decompose \eqref{OSCIINT2} into two integrals:
\begin{align}
\Pi_{\chi, \lambda}\bigg( \frac{\theta}{\lambda}, \frac{u}{\sqrt{\lambda} }, \frac{\varphi}{\lambda}, \frac{v}{\sqrt{\lambda} } \bigg) &\sim I_1' + I_2',\\
I_1' &= e^{- i s \lambda}\lambda^{m}\int e^{i \lambda \tilde{\Psi}}  \eta(\sigma_1, \sigma_2) \tilde{A} \, d\sigma_1 d\sigma_2 d\mu_{\tau}(w) dr\\
I_2' &= e^{- i s \lambda}\lambda^{m}\int e^{i \lambda \tilde{\Psi}}  (1 - \eta(\sigma_1, \sigma_2))  \tilde{A} \, d\sigma_1 d\sigma_2 d\mu_{\tau}(w) dr
\end{align}
with $\tilde{A}$ and $\tilde{\Psi}$ as in \eqref{eqn:AMPPHASE2}. 
\begin{Lemma}\label{sigma localization}
We have $I_2' = O(\lambda^{-\infty})$.
\end{Lemma}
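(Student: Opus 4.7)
The plan is to exploit non-stationary phase: on the support of $1-\eta$, the phase $\tilde\Psi$ has no critical points in $(r,\Rep w_0)$, so iterated integration by parts in these variables produces arbitrary polynomial decay in $\sqrt\lambda$. A direct computation gives
\begin{align}
\partial_r\tilde\Psi = \sigma_1\tau - 1, \qquad \partial_{\Rep w_0}\tilde\Psi = \frac{\sigma_1 - \sigma_2}{2};
\end{align}
the only joint critical point is $\sigma_1 = \sigma_2 = 1/\tau$, which lies inside the support of $\eta$ rather than $1-\eta$.

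First I would insert a second smooth partition of unity in $\sigma_1$ to split $\supp(1-\eta)$ into two sub-regions: (a) where $\sigma_1 \geq 5/(4\tau)$, so $\partial_r\tilde\Psi \geq 1/4$; and (b) where $\sigma_1 \leq 5/(4\tau)$. Because $\supp(1-\eta)$ forces $\max(\sigma_1,\sigma_2) > 3/(2\tau)$, on region (b) one automatically has $\sigma_2 > 3/(2\tau)$ and hence $|\partial_{\Rep w_0}\tilde\Psi| \geq 1/(8\tau)$. On region (a) I apply the first-order operator
\begin{align}
L_a = \frac{1}{i\sqrt\lambda(\sigma_1\tau - 1)}\partial_r,
\end{align}
and on region (b) the operator
\begin{align}
L_b = \frac{2}{i\sqrt\lambda(\sigma_1 - \sigma_2)}\partial_{\Rep w_0},
\end{align}
each of which reproduces $e^{i\sqrt\lambda\tilde\Psi}$. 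Iterated integration by parts $N$ times produces no boundary contributions because $\varrho_\lambda$ and $\hat\chi$ are smooth cutoffs of compact support.

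The main obstacle is verifying that differentiating the amplitude $\tilde A = e^{\sigma_1\tilde S + \sigma_2\tilde R}\cdot\text{symbol}\cdot\varrho_\lambda\cdot J$ does not defeat the $(\sqrt\lambda)^{-N}$ gain. The dangerous factor comes from differentiating the exponential: applying $\partial_r$ to $e^{\sigma_j\tilde S}$ yields $\sigma_j\cdot\partial_r\tilde S\cdot e^{\sigma_j\tilde S}$, where $\partial_r\tilde S = \lambda\partial_r T$. Since on the support of $\varrho_\lambda$ the rescaled Heisenberg arguments of $T$ are $O(\lambda^{\delta - 1/2})$ and $T$ vanishes quadratically in these arguments, one verifies $\partial_r\tilde S,\ \partial_{\Rep w_0}\tilde R = O(\lambda^\delta)$. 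The key observation is that the ratios $\sigma_1/(\sigma_1\tau - 1)$ (on region (a)) and $(|\sigma_1|+|\sigma_2|)/|\sigma_1 - \sigma_2|$ (on region (b)) are uniformly bounded in $\sigma_j$, so each application of $L_a^*$ or $L_b^*$ contributes a factor of $O(\lambda^{\delta - 1/2})$. Since $\delta < 1/2$ as fixed in \eqref{eqn:CUTOFF}, iterating $N$ times produces a bound $O(\lambda^{-N(1/2-\delta)})$; taking $N$ arbitrarily large yields $I_2' = O(\lambda^{-\infty})$. Convergence of the $\sigma_j$-integrals at infinity is handled by the uniform bound $|e^{\sigma_j\tilde R}|, |e^{\sigma_j\tilde S}| \leq 1$ coming from the Calabi diastasis inequality \eqref{Diastasis inequality}, combined with the standard oscillatory-integral interpretation of the Boutet de Monvel--Sj\"ostrand parametrix.
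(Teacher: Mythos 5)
Your argument is correct and follows essentially the same non-stationary-phase strategy as the paper: on $\supp(1-\eta)$ the $(r,\Rep w_0)$-gradient of $\dtilde{\Psi}$ is bounded away from zero, so repeated integration by parts in those variables kills the integral to $O(\lambda^{-\infty})$. Your further split of $\supp(1-\eta)$ into subregions (a) and (b), each handled with a single partial derivative, and your explicit bookkeeping of how differentiating $e^{\sigma_1\tilde S+\sigma_2\tilde R}$ only costs a harmless $O(\lambda^{\delta})$ per step, are minor refinements that make the same mechanism explicit rather than a genuinely different route.
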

\begin{proof}
Notice that 
\begin{align}
    \abs*{ \nabla_{\Rep w_0,t} \tilde{\Psi}}^2 \geq \bigg( \frac{\sigma_1}{2} - \frac{\sigma_2}{2} \bigg) ^{2} + ( \tau\sigma_1 - 1 )^{2} \ge \frac{1}{4}
\end{align}
on the support of \(1 - \eta\). Thus, the lemma follows from repeated integration by parts in \((\Rep w_0, t)\).
\end{proof}

We have reduced the spectral localization kernel to the oscillatory integral
\begin{equation}\label{eqn:OSCINTFINAL}
\Pi_{\chi, \lambda}\bigg( \frac{\theta}{\lambda}, \frac{u}{\sqrt{\lambda} }, \frac{\varphi}{\lambda}, \frac{v}{\sqrt{\lambda} } \bigg) \sim e^{- i s \lambda} \lambda^{m} \int e^{i \sqrt{\lambda} \dtilde{\Psi}} \dtilde{A} \,dw' d(\Rep w_0) d\sigma_1 d\sigma_2 dr,
\end{equation}
with phase and amplitude
\begin{equation}
\begin{aligned}
\dtilde{\Psi} &= - r - \frac{\sigma_2}{2}\Rep w_0 +\frac{\sigma_1}{2}(\Rep w_0 + 2 \tau r ), \\
\dtilde{A} &=e^{\sigma_2\tilde{R} + \sigma_1 \tilde{S}}  \eta \varrho_{\lambda} \hat{\chi} \tilde{A} J.
\end{aligned}
\end{equation}
Since the exponential of the terms of order \(\lambda^{-\frac{1}{2}}\) appearing in \(\tilde{R}, \tilde{S}\) is bounded it may be absorbed into the main amplitude. We will now reduce \eqref{eqn:OSCINTFINAL} to a Gaussian integral over \(\C^{m-1}\) by integrating out the variables \(\Rep w_0, \sigma_1, \sigma_2, r\) using the method of stationary phase. We note the following derivatives:
\begin{align}
     \partial_{\sigma_2} \dtilde{\Psi} & = - \frac{1}{2} \Rep w_0, &   \partial_{\sigma_1} \dtilde{\Psi} &= \frac{1}{2}( \Rep w_0 + 2 \tau r ),\\
     \partial_{t} \dtilde{\Psi} & = - 1 + \tau{\sigma_1}, & \partial_{\Rep w_0} \dtilde{\Psi}  &= - \frac{\sigma_2}{2} + \frac{\sigma_1}{2}.
 \end{align}
The critical set of the phase is the point \(C =\{ \Rep w_0 = 0, r = 0, \sigma_1 = \sigma_2 = \frac{1}{\tau}\} \). The Hessian matrix and its inverse at the critical point are
\begin{align}
   \dtilde{\Psi}''_{C} &=  \p*{\begin{array}{c|cccc}  & r & \sigma_1 & \sigma_2 & \Rep w_0 \\ \hline  r  & 0 & \tau & 0 & 0\\ \sigma_1 & \tau & 0 & 0 & \frac{1}{2} \\ \sigma_2 & 0 & 0 & 0 & - \frac{1}{2} \\ \Rep w_0 & 0 &\frac{1}{2} & - \frac{1}{2} & 0\end{array}}, \quad (\dtilde{\Psi}''_C)^{-1} =      \begin{pmatrix} 0 & \frac{1}{\tau} & \frac{1}{\tau} & 0 \\
    \frac{1}{\tau} & 0 & 0 &0 \\ \frac{1}{\tau} & 0 &0 &- 2\\ 0 & 0 &- 2 & 0 \end{pmatrix} .
 \end{align}

Set
 \begin{align}
     L_{\dtilde{\Psi}} = \ang*{(\dtilde{\Psi}''_C)^{-1} D, D} =  \frac{2}{\tau}\partial_{\sigma_1}\partial_{r} + \frac{2}{\tau} \partial_{\sigma_2}\partial_r - 4 \partial_{\sigma_2}\partial_{\Rep w_0}
 \end{align}
By the method of stationary phase (\cite[Theorem 7.75]{Hormanderv1}), we have
 \begin{multline} 
    e^{- is \lambda} \lambda^m \int_{\R\times \R^{ + } \times \R^{ +} \times \R}e^{i \sqrt{\lambda} \dtilde{\Psi}} \dtilde{A} \, dw' d\sigma_1 d\sigma_2dr \\ 
		= \frac{8 \pi^{2}}{\lambda \tau} e^{- i s \lambda} \lambda^m \sum_{j = 0}^{N-1}\lambda^{ - \frac{j}{2}}\sum_{\nu - \mu = j} \sum_{2\nu \ge 3 \mu} \frac{1}{i^j 2^\nu} L_{\dtilde{\Psi}}^\nu \bra*{e^{\mu(\sigma_2\tilde{R} + \sigma_1 \tilde{S})}  \eta \varrho_{\lambda} \hat{\chi} \tilde{A} J }_{C} +\hat{R}_N,  \label{Abstract Stationary Phase Expansion}
 \end{multline}
with the remainder term satisfying
\begin{align}
\int_{\C^{m - 1}}^{} &\abs{\hat{R}_{N}}\,dw' \leq \lambda^{ - \frac{N}{2}} \int_{\C^{m - 1}}  \sum_{ \abs{\alpha} \leq 2 N}\sup \abs{D^{\alpha}(\eta \rho_\lambda \hat{\chi} \tilde{A} J)} \,dw' \le C_N \lambda^{-\frac{N}{2}}.
\end{align}
(Here, the supremum and the derivative $D^\alpha$ are taken over $t, \sigma_1, \sigma_2, \Rep w_0$ and the integral is with respect to the remaining variable $w'$. Note that $\tilde{A}$, defined in \eqref{eqn:AMPANDPHASE}, is a symbol of order zero.)

Thanks to the remainder estimate, we may integrate the asymptotic expansion \eqref{Abstract Stationary Phase Expansion} term-by-term in $w'$ to obtain \eqref{eqn:OSCINTFINAL}. Upon substituting expressions \eqref{Full R_1 remainder} and \eqref{Full S_1 remainder} the leading term is given by the following Gaussian integral

\begin{multline}
    C_m  \frac{\lambda^{m-1}}{\tau} e^{-is \lambda} \sigma_{s, \tau, 0}(p) e^{\frac{i}{2\tau}(\theta-\varphi)}\\
    \times \int_{\C^{m-1}}\exp\set[\bigg]{\frac{1}{\tau}\p[\bigg]{- \frac{|u|^2}{2}- \frac{|w'|^2}{2} + u \cdot \bar{w}' - \frac{|v|^2}{2} - \frac{|\mathcal{M}_sw'|^2}{2} + \bar{v} \cdot \mathcal{M}_sw' }}\,dw' \label{Gaussian Integral 2}
\end{multline}

The symbol \(\sigma_{s, \tau,0}(p)\) can be computed as in \cite{Zelditch20Husimi} to be \((\det P_s)^{- \frac{1}{2}}\). This is precisely the same integral as \eqref{Gaussian Integral} and so we obtain the leading term

\begin{align}
  \frac{C_m}{\tau} \left(\frac{\lambda}{\tau}\right)^{m-1}e^{- i s \lambda} \hat{\Pi}_{\mathcal{H}, M_s}\left(\frac{\theta}{2 \tau}, \frac{u}{\sqrt{\tau} }, \frac{\varphi}{2 \tau}, \frac{v}{\sqrt{\tau} }\right).
\end{align}
The lower order terms have the form
\begin{multline}
    {\frac{C_m}{\tau^m}}\lambda^{m-1 - \frac{j}{2}} e^{- i s \lambda} e^{\frac{i}{2\tau}(\theta-\varphi)} \\
    \times \int_{\C^{m-1}}P_j(u,v,w,s, \theta, \varphi)e^{\frac{1}{\tau}\p[\big]{- \frac{|u|^2}{2}- \frac{|w'|^2}{2} + u \cdot \bar{w}' - \frac{|v|^2}{2} - \frac{|\mathcal{M}_sw'|^2}{2} + \bar{v} \cdot \mathcal{M}_sw' }}\,dw',
\end{multline}
with \(j\) a positive integer and \(P_j(u,v,w,s,\theta,\varphi)\) a polynomial. This can be rewritten as
\begin{multline}
    {\frac{C_m}{\tau^m}}\lambda^{m-1 - \frac{j}{2}} e^{- i s \lambda} e^{\frac{i}{2\tau}(\theta-\varphi)}\\
    \times \int_{\C^{m-1}}e^{\frac{1}{\tau} \p[\big]{- \frac{|u|^2}{2}-\frac{|v|^2}{2} + u \cdot \bar{w}' + \bar{v} \cdot \mathcal{M}_s w' } }\hat{P}_j(u,v,s, \theta, \varphi,D)e^{\frac{1}{\tau}\p[\big]{- \frac{|w'|^2}{2}  -  \frac{|\mathcal{M}_s w'|^2}{2} }}\,dw',
\end{multline}
where \(\hat{P}_j\) is a differential operator with polynomial coefficients. We integrate by parts with the \(\hat{P}_j\) operator   from which we obtain the same Gaussian integral as \eqref{Gaussian Integral 2} against a polynomial independent of \(w'\). As a result, the lower order terms in the asymptotic expansion take the form
\begin{align}
  \frac{C_m}{\tau} \left(\frac{\lambda}{\tau}\right)^{m-1- \frac{j}{2}}e^{- i s \lambda}P_j(p,s,\tau,u,v,\theta,\varphi) \hat{\Pi}_{\mathcal{H}, M_s}\left(\frac{\theta}{2 \tau}, \frac{u}{\sqrt{\tau} }, \frac{\varphi}{2 \tau}, \frac{v}{\sqrt{\tau} }\right).
\end{align}

\subsection{Proof of \autoref{Graph Scaling Theorem2}: asymptotic expansion for \texorpdfstring{$\Pcl$}{}}
We can also study the on-shell scaling asymptotics for the tempered spectral projection kernel \eqref{eqn:TEMPEREDPCL} under Heisenberg-type rescaling. The proof is nearly identitical to that of \autoref{Graph Scaling Theorem}. We first write out the kernel using \autoref{Dynamical Toeplitz Wave group} and \eqref{Parametrix for Szego Kernel}:
\begin{align}\label{Partial Wave Group}
    \Pcl\bigg( \frac{\theta}{\lambda}, \frac{u}{\sqrt{\lambda} }, \frac{\varphi}{\lambda}, \frac{v}{\sqrt{\lambda} } \bigg) \sim \int_{\R \times \partial M_\tau \times \R^+ \times \R^+} e^{i \lambda \Psi} B \,\quaddifferential,
\end{align}
in which the phase $\Psi$ and the amplitude $B$ are given by
\begin{equation}
\begin{aligned}
\Psi &= -t + \frac{1}{\lambda} \sigma_2 \psi_\tau\bigg( \bigg( \frac{\theta}{\lambda}, \frac{u}{\sqrt{\lambda} } \bigg), w \bigg) + \frac{1}{\lambda} \sigma_1   \psi_{\tau}\bigg( \gtc{t}(w), \bigg( \frac{\varphi}{\lambda}, \frac{v}{\sqrt{\lambda} } \bigg) \bigg),\\
B &= \hat{\chi}(t) s\bigg( \tl, \ul, w, \sigma_2 \bigg)s\bigg( \gtc{t}(w), \phil, \vl, \sigma_1 \bigg) \sigma_{t, \tau}(w, \sigma_1  ).
\end{aligned}
\end{equation}

The only modification is that despite identical notation, the unitarization symbol $\sigma_{t,\tau}$ for $B$ is now of order $-(m-1)/2$, whereas $\sigma_{t,\tau}$ in the expression for $A$ in \eqref{eqn:AMPA} is of order zero. Hence, the oscillatory integral expression \eqref{Partial Wave Group} is exactly $\lambda^{-(m-1)/2}$ times the expression \eqref{eqn:OSCINTORIGINAL}. The rest of the computations proceed in the same manner.

\section{Proofs of \texorpdfstring{$L^p$}{Lp} estimates}\label{sec:LP}

In this section we prove $L^p$ estimates of the Szeg\H{o} kernel, namely \autoref{theo:MAINTHEO} and \autoref{Short window}. \autoref{cor:LP}, an $L^p$ upper bound for normalized eigenfunctions of $\pdp$, is then deduced.

We begin by establishing the following Gaussian decay estimate for $\pcl(z,w)$ away from a small neighborhood of the graph $(z,w) = (p, \gtc{s}(p))$.

\begin{Lemma}[Gaussian decay estimate]\label{GAUSSIAN}
 Fix \(z \in \pmt\). Set $\delta = \abs{\supp \hat{\chi}} = 2\epsilon$ and \(T_\delta(z) = \{\gtc{t}(z): |t| < \delta\}\). Then, after possibly shrinking \(\supp \hat{\chi}\), there exists \(C >0\) such that whenever \(d(T_\delta(z),w) < C\lambda^{-\frac{1}{3}}\)  we have
 \begin{align}
     \abs{\pcl(z,w)} \leq C(1 + o(1))\lambda^{m-1} e^{- \frac{1- \epsilon}{2\tau}\lambda d(T_\delta(z),w)^2} + O(\lambda^{-\infty}).
 \end{align}
\end{Lemma}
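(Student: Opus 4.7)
The plan is to reduce the estimate to the on-shell scaling asymptotics of \autoref{Graph Scaling Theorem} by localizing near the point of \(T_\delta(z)\) closest to \(w\) and then exploiting the Gaussian decay inherent in the metaplectic kernel \(\hat{\Pi}_{\mathcal{H},M_s}\).

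First, I would fix \(w\) with \(d := d(T_\delta(z), w) < C\lambda^{-1/3}\), shrink \(\supp \hat{\chi}\) as needed so that \(\gtc{t}\) for \(|t| < \delta\) remains a small perturbation of the identity and the infimum of \(t \mapsto d(\gtc{t}(z), w)\) is attained at a unique interior point \(s \in (-\delta,\delta)\). In Heisenberg coordinates centered at \(\gtc{s}(z)\), write \(w = (\theta_w, u_w)\). The first-variation condition at \(s\) forces the displacement \(w - \gtc{s}(z)\) to be perpendicular to the characteristic vector field, which in these coordinates points along \(\partial_t\) (to leading order). This yields \(\theta_w = O(d^2)\) while \(|u_w|^2 = d^2 (1 + O(d))\). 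Setting \(\varphi = \lambda \theta_w\) and \(v = \sqrt{\lambda}\, u_w\) gives \(|\varphi| = O(\lambda^{1/3})\) and
\[
|v|^2 = \lambda d^2 \bigl(1 + O(\lambda^{-1/3})\bigr).
\]

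With these rescaled variables and \((\theta, u) = (0,0)\) (since \(z\) is at the origin of its own Heisenberg chart), I apply \autoref{Graph Scaling Theorem}. The principal term is
\[
\frac{C_{m,M}}{\tau^m} e^{is\lambda} \lambda^{m-1}\, \hat{\Pi}_{\mathcal{H},M_s}\!\left(0, 0; \tfrac{\varphi}{2\tau}, \tfrac{v}{\sqrt{\tau}}\right).
\]
Using the explicit Gaussian formula from \autoref{sec:METAPLECTIC} and the unitarity of the metaplectic representation on Bargmann--Fock space, together with the fact that \(M_s\) is close to \(I\) (so \(P_s \to I\) and \(Q_s \to 0\) as \(s \to 0\)), one obtains
\[
\left|\hat{\Pi}_{\mathcal{H},M_s}\!\left(0, 0; \tfrac{\varphi}{2\tau}, \tfrac{v}{\sqrt{\tau}}\right)\right| \le C(1 + o(1))\, e^{-(1-\epsilon)|v|^2/(2\tau)} = C(1 + o(1))\, e^{-(1-\epsilon)\lambda d^2 / (2\tau)},
\]
where the \((1 - \epsilon)\) factor absorbs the cross term \(\Rep(\bar{v}\, P_s^{-1} Q_s\, \bar{v})\), which is \(o(|v|^2)\) once \(\supp\hat{\chi}\) is small enough.

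Finally, I control the subleading terms in the parabolic regime \(|v| \lesssim \lambda^{1/6}\), \(|\varphi| \lesssim \lambda^{1/3}\). The polynomial prefactor \(1 + \sum_{j=1}^N \lambda^{-j/2} P_j(\,\cdot\,)\) contributes only a factor \(1 + o(1)\) since each \(\lambda^{-j/2} P_j\) is a polynomial of fixed degree in \(v, \varphi\) evaluated in this regime. The remainder \(\lambda^{-(N+1)/2} R_N\) is bounded uniformly on \(\{|(\varphi, v)| \le \rho\}\) for any fixed \(\rho\); taking \(\rho\) slightly larger than the scale \(\lambda^{1/3}\) (or equivalently, re-doing the argument replacing \(N\) by a suitable \(N(\lambda)\)) and then sending \(N \to \infty\) yields the \(O(\lambda^{-\infty})\) error. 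The main obstacle is the last paragraph: capturing the precise exponent \((1-\epsilon)/(2\tau)\) in the Gaussian requires tracking the deformation of the metaplectic Gaussian as \(M_s\) perturbs away from the identity, which is the reason for the condition ``after possibly shrinking \(\supp \hat{\chi}\)'' in the statement.
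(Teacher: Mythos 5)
Your overall strategy matches the paper's: center Heisenberg coordinates at the minimizer $s$ of $t \mapsto d(\gtc{t}(z),w)$, note that orthogonality to the orbit puts $w$ at a displacement $(\theta_w,u_w)$ with $\theta_w = O(d^2)$, $|u_w| \approx d$, rescale by $(\lambda, \sqrt{\lambda})$, and read off the Gaussian decay from the metaplectic kernel. However, there is a genuine gap in the way you invoke the scaling asymptotics. You apply \autoref{Graph Scaling Theorem} directly in the regime $|\varphi| \lesssim \lambda^{1/3}$, $|v| \lesssim \lambda^{1/6}$, but the remainder estimate in that theorem, $\norm{R_N}_{C^j(\{|(\theta,u)|+|(\varphi,v)|\le\rho\})} \le C_{N,j,\rho}$, is only stated for \emph{fixed} $\rho$ with a constant that is allowed to depend on $\rho$ (and on $N$). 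Your suggestion to ``take $\rho$ slightly larger than $\lambda^{1/3}$'' or ``replace $N$ by $N(\lambda)$'' does not close this: nothing in the theorem statement controls the growth of $C_{N,j,\rho}$ in $\rho$ or $N$, so the expansion cannot be used off the shelf on a $\lambda$-dependent region. The paper instead \emph{re-runs} the stationary phase argument at the $\lambda^{1/6}$ scale, keeping only the first correction and proving an explicit bound of the form $R(p,s,u,\lambda) \le C(s)|u|$ for $|u| < \lambda^{1/6}$, so that the full amplitude is the leading Gaussian times $1 + O(|u|/\sqrt{\lambda})$ rather than an unquantified $1+o(1)$.

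Relatedly, your claim that the prefactor $1 + \sum_{j=1}^N \lambda^{-j/2}P_j$ is $1+o(1)$ at the scale $|v|\lesssim \lambda^{1/6}$, $|\varphi|\lesssim\lambda^{1/3}$ is not justified as stated, since the degree of $P_j$ grows with $j$ and $\lambda^{-j/2}P_j$ can be $O(1)$ or larger at these scales without a finer look at the structure of the $P_j$. This is exactly what the paper's more careful bookkeeping of the first remainder term avoids. A minor point: invoking ``unitarity of the metaplectic representation'' does not by itself give a pointwise bound on the Schwartz kernel $\hat{\Pi}_{\mathcal{H},M_s}$; the bound comes from the explicit Gaussian formula and the smallness of $P_s^{-1}Q_s$ for $|s|$ small (you do also cite the explicit formula, so this is an imprecision of phrasing rather than a mathematical hole). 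To repair the proof you should replace the blind citation of \autoref{Graph Scaling Theorem} with a re-examination of the stationary-phase expansion on the $\lambda^{1/6}$ ball, tracking the remainder explicitly, as in the paper.
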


\begin{proof}
 Let $\abs{s} < \epsilon = \delta/2$. In Heisenberg coordinates centered at $\gtc{s}(z)$, consider points of the form \(w = \gtc{s}(z) + (\frac{\varphi}{\lambda}, \frac{u}{\sqrt{\lambda}})\) with \(|(\varphi, u)| < \lambda^{\frac{1}{6}}\). We repeat the stationary phase computation in the proof of \autoref{Graph Scaling Theorem}:
    \begin{align}\label{eqn:OSCINTFINAL2}
 \Pi_{\chi, \lambda}\bigg( z, \gtc{s}(z) + \bigg(\frac{\varphi}{\lambda}, \frac{v}{\sqrt{\lambda} }\bigg) \bigg) &\sim e^{- i s \lambda} \lambda^{m} \int e^{i \sqrt{\lambda} \dtilde{\Psi}} \dtilde{A} \,dw' d(\Rep w_0) d\sigma_1 d\sigma_2 dt,
\end{align}
with phase and amplitude defined in the same way as \eqref{eqn:OSCINTFINAL}. Keeping track of the first order remainder term, we find
\begin{align}
    \left| \Pi_{\chi, \lambda}\bigg( z, \gtc{s}(z) + \bigg(\frac{\varphi}{\lambda}, \frac{v}{\sqrt{\lambda} }\bigg) \bigg) \right| &= \lambda^{m-1}e^{- \frac{|u|^2}{2\tau} - \frac{\bar{u}P^{-1}_sQ_s  \bar{u}}{2\tau}}\\
    &\quad+ \lambda^{m-1-\frac{1}{2}}e^{- \frac{1- \epsilon}{2 \tau} \left( |u|^2 + \bar{u}P^{-1}_s Q_s \bar{u}\right)}R(p,s,u,\lambda),
 \end{align}
  where $P_s,Q_s$ are matrices defined in \eqref{Symplectic complexification} and $R(p,s,u,N) \leq C(s) |u|$ for $\abs{u} < \lambda^{\frac{1}{6}}$.

  Since \(P_s^{-1}Q_s = o(s)\)
 and \(|u| \approx \sqrt{\lambda}d (\gtc{s}(z), \gtc{s}(z) + \frac{u}{\sqrt{\lambda}})\), we get uniformly for \(|s| < \epsilon\) and $\abs{u} < \lambda^{\frac{1}{6}}$ that
  \begin{align}
     \left| \Pi_{\chi, \lambda}\bigg( z, \gtc{s}(z) + \bigg(\frac{\varphi}{\lambda}, \frac{v}{\sqrt{\lambda} }\bigg) \bigg) \right| \leq C\lambda^{m-1}\left(1 + \frac{|u|}{\sqrt{\lambda}}\right) e^{-\frac{1-\epsilon}{2}d \left(\gtc{s}(z), \gtc{s}(z) + \frac{u}{\sqrt{\lambda}} \right)},
 \end{align}
as desired.
\end{proof}

To establish sharpness we will also need the following lower bound on \(\pcl\) in a \(\lambda^{ - \frac{1}{2}} \) of the graph.

\begin{Lemma}\label{Lower Bound}
   Fix \(z \in \pmt\) and \(D > 0\). Set $\delta = \abs{\supp \hat{\chi}} = 2\epsilon$ and \(T_\delta(z)\) as in \autoref{GAUSSIAN}. Then, after possibly shrinking \(\supp \hat{\chi}\), there exists \(C >0\) such that whenever \(d(T_\delta(z),w) < D \lambda^{-\frac{1}{2}}\)  we have
 \begin{align}
     \abs{\pcl(z,w)} \geq C(1 - o(1))\lambda^{m-1} e^{- \frac{1 + \epsilon}{2\tau}\lambda d(T_\delta(z),w)^2}. 
 \end{align}
\end{Lemma}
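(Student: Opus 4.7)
The plan is to mirror the upper-bound argument of \autoref{GAUSSIAN} with two modifications: extract the \emph{leading} term rather than the first-order remainder, and restrict to the bounded region $|v| \le D$ where all sub-leading corrections are genuinely small.

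Given $w$ with $d(T_\delta(z), w) < D\lambda^{-1/2}$, I would first pick $s_* \in (-\delta, \delta)$ realizing $d(\gtc{s_*}(z), w) = d(T_\delta(z), w)$. Since the Reeb vector field $\Xi_{\sqrt{\rho}}$ is tangent to $T_\delta(z)$, a distance-realizing geodesic from $w$ lands perpendicularly on the orbit. In Heisenberg coordinates at $\gtc{s_*}(z)$, the $\theta$-axis points along the Reeb direction (via \eqref{Vector Fields at p}), so one can write $w = \gtc{s_*}(z) + (\varphi/\lambda, v/\sqrt{\lambda})$ with $|\varphi|$ uniformly bounded (its along-orbit component having been absorbed into $s_*$) and with $|v|^2 = \lambda \, d(T_\delta(z), w)^2 (1 + o(1))$; in particular $|v| \le D(1 + o(1))$.

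Next, I would apply \autoref{Graph Scaling Theorem} at $p = z$, $s = s_*$, and $\theta = u = 0$ with $(\varphi, v)$ as above, taking $N = 1$. Because $|\varphi|, |v|$ are bounded, the polynomial correction $P_1$ and the remainder $R_1$ are uniformly controlled, giving
\begin{align}
\abs{\pcl(z, w)} = \frac{C_{m,M}}{\tau^m} \lambda^{m-1} \Bigl| \hat{\Pi}_{\mathcal{H}, M_{s_*}}\Bigl(0, 0, \tfrac{\varphi}{2\tau}, \tfrac{v}{\sqrt{\tau}}\Bigr) \Bigr| (1 + O(\lambda^{-1/2})).
\end{align}
Using the explicit Bargmann--Fock formulas in \autoref{sec:METAPLECTIC}, this modulus equals $C \abs{\det P_{s_*}}^{-1/2} \exp\set{-|v|^2/(2\tau) - \Rep(\bar{v} P_{s_*}^{-1} Q_{s_*} \bar{v})/(2\tau)}$, where $P_{s_*}, Q_{s_*}$ are the holomorphic and antiholomorphic components of the complexification of $M_{s_*}$.

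Finally, since $M_0 = I$ implies $Q_0 = 0$ and hence $\|P_{s_*}^{-1} Q_{s_*}\| = O(|s_*|)$, shrinking $\supp \hat{\chi}$ so that $|s_*| < \epsilon$ is small enough guarantees $|\Rep(\bar{v} P_{s_*}^{-1} Q_{s_*} \bar{v})| \le \epsilon |v|^2$, and also $|\det P_{s_*}|^{-1/2} \ge 1 - o(1)$. Substituting these bounds together with $|v|^2 = \lambda \, d(T_\delta(z), w)^2 (1 + o(1))$ produces the desired lower bound. The main step requiring care is justifying the absorption of the $\theta$-displacement of $w$ into the choice of $s_*$; once this geometric reduction is in place, the remainder is a routine computation on the model Bargmann--Fock space combined with the uniform control of the stationary-phase remainder in the bounded window.
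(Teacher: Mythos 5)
Your argument follows the same route the paper takes: the paper simply declares the lemma an immediate corollary of \autoref{Graph Scaling Theorem}, obtained by setting $w = \gtc{s}(z) + (\varphi/\lambda, v/\sqrt{\lambda})$ with $|(\varphi, v)| < D$ and reading off the leading Gaussian. You have correctly identified this as the mechanism and filled in the details the authors left implicit: choosing $s_*$ to realize the distance, checking that the remaining displacement has bounded rescaled Heisenberg coordinates, evaluating the modulus of $\hat{\Pi}_{\mathcal{H},M_{s_*}}$ via the explicit Bargmann--Fock formula, and controlling the correction $\bar{v}P_{s_*}^{-1}Q_{s_*}\bar{v}$ by shrinking $\supp\hat{\chi}$.

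One point you state rather quickly deserves slightly more care: the assertion that $|\varphi|$ is uniformly bounded after absorbing the along-orbit component into $s_*$. Heisenberg coordinates are not Riemannian normal coordinates, and a displacement of Riemannian length $\ell$ transverse to the Reeb orbit generically produces a $\theta$-component of order $\ell^2$, not zero. It is precisely this parabolic behavior that makes the bound work: with $\ell = d(T_\delta(z),w) \lesssim \lambda^{-1/2}$ one gets $|\theta| \lesssim \lambda^{-1}$, hence $|\varphi| = \lambda|\theta| = O(1)$. Stating this explicitly (rather than saying the component is ``absorbed'') closes the only potential gap I see. With that spelled out, your computation of the modulus of the metaplectic kernel and the shrinking argument for $P_{s_*}^{-1}Q_{s_*}$ are correct and complete the proof.
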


\begin{proof}
  This is an immediate corollary of \autoref{Graph Scaling Theorem} when we take \(w = \gtc{s}(z) + (\frac{\varphi}{\lambda}, \frac{u}{\sqrt{\lambda}})\) with \(|(\varphi, u)| < D\).
\end{proof}

\subsection{Proof of \autoref{theo:MAINTHEO}: sharp norm estimates for \texorpdfstring{$\pcl$}{}}

  We invoke the Shur-Young inequality
  \begin{align}
    \norm{\pcl}_{L^{p} \to L^{q}} \leq C_p \left[ \sup_z \int_{\pmt} |\pcl(z,w)|^{r} \,d w \right] ^{\frac{1}{r}}, \quad \frac{1}{r} = 1 - \frac{1}{p} + \frac{1}{q}. \label{Shur - Young}
  \end{align}
With \(T_\delta\) as in \autoref{GAUSSIAN}, we break up the integral 
\begin{align}
\int_{\pmt} | \pcl(z,w)|^r\, dw &= \int_{d(T_{\delta}(z), w)\leq \lambda^{- \frac{1}{3}}} | \pcl(z,w)|^r \, dw \label{close to tube} \\
&\quad+ \int_{d(T_{\delta}(z), w)\geq \lambda^{- \frac{1}{3}}} | \pcl(z,w)|^r\, dw. \label{Far from Tube}
\end{align}

The integration by parts argument for \autoref{Localization in time and space} can be adapted to show that \eqref{Far from Tube} is \(O(\lambda^{- \infty}) \). We use \autoref{GAUSSIAN} to see that \eqref{close to tube} is to leading order
\begin{align}
    C \lambda^{r(m-1)} \int_{\R^{2(m-1)}}e^{-r \lambda \frac{|u|^2}{4 \tau}}\, du  \leq C\lambda^{(r-1)(m-1)}. \label{upper}
\end{align}
Combining these estimates establishes the desired upper bound.

 We now show that this upper bound is sharp. Set
  \begin{align}
    \Phi^{w}_{\chi, \lambda}(z) := \frac{\pcl(z,w)}{\norm{\pcl(\, \cdot\,, w)}_{L^{p}( \pmt)}}.
  \end{align}
Note that $\norm{\Phi^{w}_{\chi, \lambda}}_{L^p(\pmt)} = 1$ and  by \eqref{expansion},
\begin{align}\label{eqn:RATIO}
  \pcl( \Phi^{w}_{\chi, \lambda} )(z)&= \frac{\Pi_{\chi^{2},\lambda}(z,w)}{\norm{\pcl(\, \cdot\,, w)}_{L^{p}( \pmt)}}.
\end{align}

To estimate the numerator of \eqref{eqn:RATIO}, we observe
\begin{align}
  \int_{\pmt}^{} \abs{\pcl(z,w)}^q\,dw   \geq \int_{d\left( T_{\delta}(z), w\right) \leq D \lambda^{ - \frac{1}{2}}}^{}  \abs{\pcl(z,w)} ^q\, dw,
\end{align}
so by applying \autoref{Lower Bound} to the integrand and a using similar argument used to show \eqref{upper}, we may conclude \(\norm{\Pi_{\chi^2,\lambda}(\cdot,w)}_{L^q( \pmt )} \geq C \lambda^{(m - 1)(1 - \frac{1}{q})}\).
Therefore, \(\norm{\Pi_{\chi^{2},\lambda}}_{L^{p}\left( \pmt \right)} \sim \lambda^{(m - 1)( 1 - \frac{1}{q} )}\).

Similarly, the denominator of \eqref{eqn:RATIO} is asymptotically \(\norm{\pcl(\, \cdot\,, w)}_{L^{p}( \pmt)}\sim \lambda^{(m-1)(1- \frac{1}{p})}\).  Together we have
 \begin{align}
  \norm*{ \pcl( \Phi^{w}_{\chi, \lambda} )(z)}_{L^q(\pmt)} \sim \lambda^{(m-1)(1 - \frac{1}{q}) - (m-1)(1 - \frac{1}{p})} = \lambda ^{(m - 1) ( \frac{1}{p} - \frac{1}{q} )},
 \end{align}
which shows $\Phi^{w}_{\chi, \lambda}$ saturates the upper bound.

\subsection{Proof of \autoref{Short window}: norm estimates for \texorpdfstring{$\Pi_{[\lambda,\lambda+1]}$}{}}

A standard argument \cite[Chapter~5]{SoggeFICA} converts the $L^p \to L^q$ estimate for $\pcl$ to that for the projection \(\Pi_{[\lambda, \lambda +1]}\) onto a short spectral interval $[\lambda, \lambda+1]$ as defined in \eqref{eqn:SHORTWINDOW}. We include a proof here for the readers' convenience.

\autoref{Short window} is equivalent to sharpness of the dual inequality
\begin{align}
    \norm{\Pil f}_{L^2( \pmt )} \leq C \lambda^{(m-1)( \frac{1}{p}- \frac{1}{2} )}\norm{f}_{L^p(\pmt )},
\end{align}
which we now establish. For the upper bound, we compute
\begin{align}
    \norm{\Pil f}_{L^2}^2 &= \sum_{\lambda \le \lambda_j < \lambda + 1} \abs{\ip{f}{e_j}}^2\\
    &  \leq C \sum_{j = 1}^\infty\chi(\lambda - \lambda_j)^2 \abs{\ip{f}{e_j}}^2\\
    &=  C \norm{\pcl f}_{L^2}^2\\
    & \leq \lambda^{2(m-1)( \frac{1}{p}- \frac{1}{2} )}\norm{f}_{L^p}^2
\end{align}

To show this upper bound is saturated, fix \(w \in \pmt\) and set \(f_{\chi,\lambda}(z) = \pcl(z,w)\). We compute
\begin{align}
   \norm{\Pil f_{\chi, \lambda}}^2_{L^2} &= \sum_{\lambda \le \lambda_j < \lambda + 1} \abs{\chi(\lambda - \lambda_j )}^2 \abs{e_{\lambda_j}(w)}^2 \norm{e_{\lambda_j}}^2_{L^2} \\
  &\geq C \sum_{\lambda \le \lambda_j < \lambda + 1} \abs{e_j(w)}^2\\
  &\geq \frac{1}{\vol(\pmt)}\int_{\pmt} \sum_{\lambda \le \lambda_j < \lambda + 1}\abs{e_j(z)}^2 \, dz \\
  &\geq C( N(\lambda + 1) - N (\lambda ))\\
  &\sim C\lambda^{m-1},
\end{align}
where \(N(\lambda) = \#\set{j : \lambda_j < \lambda }\) is the eigenvalue counting function.

It follows from the proof of the sharpness of \autoref{theo:MAINTHEO} that \(\norm{f_{\chi, \lambda}}_{L^p} \sim \lambda^{(m-1)( 1 - \frac{1}{p})}\), so
\begin{align}
    \sup_{f \in L^p\left(\pmt\right)}\lambda^{-(m-1)(\frac{1}{p} - \frac{1}{2})}\frac{\norm{\Pil f}_{L^2}}{\norm{f}_{L^p}} &\geq C \lambda^{-\frac{(m-1)}{2}} (N(\lambda +1) - N(\lambda))^{\frac{1}{2}}\\
    &\ge C + o(1).
\end{align}
Taking the \(\limsup\) of both sides we get 
\begin{align}
   \limsup_{\lambda \to \infty}  \sup_{f \in L^p\left(\pmt\right)}\lambda^{-(m-1)(\frac{1}{p} - \frac{1}{2})}\frac{\norm{\Pil f}_{L^2}}{\norm{f}_{L^p}} > 0
\end{align}
which shows that upper bound is sharp.

\section{Proof of \autoref{prop:Approx Reproduce}: complexified Laplace eigenfunctions and  eigenfunctions of \texorpdfstring{\(\pdp\)}{PDP}}
Here we give a proof of \autoref{prop:Approx Reproduce}. In the following we use the parameter \(\mu\) for the frequencies of \(-\Delta\) to distinguish it from the spectral parameter \(\lambda\) used for \(\pdp\). Set 
\begin{align}
  \phitilde(z) = \frac{\phi^{\C}_{\mu}(z)}{\norm{\phi^{\C}_{\mu}}_{L^{2}( \pmt )}}
\end{align}
to be the \(L^{2}(\pmt)\) normalized complexified Laplace eigenfunction.

On one hand, by the first part of \autoref{Unitary pseudo}, we may write $    U({i\tau})^* U(i \tau) = (-\Delta)^{- \frac{m-1}{4}} + R$ for some $R \in \Psi^{- \frac{m+1}{2}}(M)$. It follows that
\begin{align}
  U({i\tau}) \sqrt{-\Delta}^{\frac{m + 1}{2}}U({i\tau})^* \phitilde &= \frac{U({i\tau}) \sqrt{-\Delta}^{\frac{m + 1}{2}}U({i\tau})^*U(i{\tau})\phi_{\mu} }{\sqrt{\ip{U({i\tau})^*U({i\tau})\phi_{\mu}}{\phi_{\mu}}}}\\ \label{Quasimode equation}
  &= \frac{U(i \tau) \sqrt{-\Delta} \phi_\mu + U(i \tau) \sqrt{-\Delta}^{\frac{m+1}{2}} R \phi_\mu}{\sqrt{\ip{U({i\tau})^*U({i\tau})\phi_{\mu}}{\phi_{\mu}}}}\\
  &=\mu \phitilde + O_{L^{2}( \pmt )}( 1 ).
\end{align} 
In the last equality, the first term follows from the definition of complexification and the eigenvalue equation; the second term follows from \(L^2\) boundedness of \((-\Delta)^{\frac{m+1}{4}} R\) as a zeroth order \(\Psi\)DO and \(U(i \tau)\) being a continuous isomorphism \(L^2(M) \to \mathcal{O}^{- \frac{m-1}{4}}(\pmt)\)

On the other hand, by the second part of \autoref{Unitary pseudo}, there exists  $A \in \Psi^{1}( \pmt )$ such that that
\begin{align}
    U({i\tau}) \sqrt{-\Delta}^{\frac{m + 1}{2}}U({i\tau})^* = \Pi_{\tau}A_{}\Pi_{\tau} \quad \text{and} \quad   \sigma(A)\big|_{\Sigma_\tau} = \sigma( D_{\sqrt{\rho} } ).
\end{align}
Therefore, we may write $    \Pi_\tau A \Pi_\tau = \Pi_\tau D_{\sqrt{\rho} }\Pi_\tau + \Pi_\tau B \Pi_\tau$ for some $B \in \Psi^{0}( \pmt )$. It follows from \(L^2\) boundedness of \(B\) that
\begin{align}
    \Pi_\tau D_{\sqrt{\rho}} \Pi_\tau \tilde{\phi}_\mu ^\C = \Pi_\tau A \Pi_\tau\tilde{\phi}_\mu ^\C -  \Pi_\tau B \Pi_\tau \tilde{\phi}_\mu ^\C = \mu \tilde{\phi}_\mu ^\C + O_{L^2(\pmt)}(1).
\end{align}

By a standard theorem giving the distance to the spectrum (see for example \cite[Theorem ~C.11]{Zworski12}), if \(\lambda_j \in \operatorname{spec}(\pdp)\) and \(\mu_j \in \operatorname{spec}(\Pi_{\tau} A \Pi_{\tau})\) then there exists \(M > 0 \) such that for \(\abs{\mu_j - \lambda_j} < M\) for all $j$ sufficiently large. Therefore, we can view the \(\phitilde\) as approximate eigenfunctions for \(\pdp\).

Additionally, we know \(\norm{\tilde{\phi}^{\C}_{\lambda_j} - e_{\lambda_j}}_{L^\infty (\pmt )} = O(\lambda_j^{\frac{m-1}{2}})\) thanks to \cite[Theorem~0.1]{Zelditch20Husimi}. Since \(\norm{\tilde{\phi}^{\C}_{\lambda_j} - e_{\lambda_j}}_{L^2 (\pmt )} = O(1)\), by the log convexity of \(L^{p}\) norms we get \(\norm{\tilde{\phi}_{\lambda_j}^{\C} - e_{\lambda_j}}_{L^p (\pmt )} = O ( \lambda^{(m - 1) ( \frac{1}{2} - \frac{1}{p} )} ) \).

\subsection{Complexified Gaussian beams as extremals: direct computation}\label{sec:SHARP}

In this section, we show that the $L^p$ estimate of \autoref{prop:Approx Reproduce} on complexified Laplace eigenfunctions is saturated by analytic continuations of Gaussian beams on the round $S^2$. We use spherical coordinates
\begin{align}
x = \sin \phi \cos \theta, \quad y = \sin \phi \sin \theta, \quad z = \cos \phi,
\end{align}
where $0 \le \phi \le \pi$ and $0 \le \theta < 2\pi$. The standard spherical harmonics are the joint eigenfunctions
\begin{equation}
-\Delta_{S^2} Y_N^m = N(N+1) Y_N^m, \quad \frac{1}{i}\frac{\partial}{\partial \theta} Y_N^m = m Y_N^m, \quad -N \le m \le N.
\end{equation}
of the spherical Laplacian and the angular momentum operator. The highest weight spherical harmonic (Gaussian beam) is of the form
\begin{align}
Y_N^N(\theta, \phi) = c_N \sin^N(\phi) e^{iN \theta}, \quad c_N = \frac{(-1)^N}{2^N N!} \sqrt{\frac{(2N+1)!}{4\pi}} \sim N^\frac{1}{4}.
\end{align}

It is convenient to transfer the computations of Guillemin--Stenzel \cite{GuilleminStenzel91} from Cartesian coordinates to spherical coordinates. In terms of complexified Cartesian coordinates, the Grauert tube $S^2_\C$ of the sphere is the set
\begin{align}
\set[\big]{(x + i \xi_x, y + i \xi_y, z + i \xi_z) : (x + i \xi_x)^2 + (y + i \xi_y)^2 + (z + i \xi_z)^2 = 1},
\end{align}
and the Grauert tube function is
\begin{align}\label{eqn:TUBE1}
\sqrt{\rho}(x + i\xi_x, y + i \xi_y, z + i\xi_z) = \sinh^{-1}\bra[\Big]{(\xi_x^2 + \xi_y^2 + \xi_z^2)^\frac{1}{2}}.
\end{align}
In terms of complexified spherical coordinates, we have
\begin{align}
    x + i \xi_x &= \sin(\phi + i \xi_\phi) \cos(\theta + i \xi_\theta)\\
    &= \cos(\phi) \sinh(\xi_\phi) \sin(\theta) \sinh(\xi_\theta) + \sin(\phi) \cosh(\xi_\phi) \cos(\theta) \cosh(\xi_\theta) \\
    & \quad + i \bra[\Big]{\cos(\phi) \sinh(\xi_\phi) \cos(\theta) \cosh(\xi_\theta) - \sin(\phi) \cosh(\xi_\phi) \sin(\theta) \sinh(\xi_\theta)}\\
    y + i \xi_y &= \sin(\phi + i \xi_\phi) \sin(\theta + i \xi_\theta)\\
    &= -\cos(\phi) \sinh(\xi_\phi) \cos(\theta) \sinh(\xi_\theta) + \sin(\phi) \cosh(\xi_\phi) \sin(\theta) \cosh(\xi_\theta) \\
    & \quad + i \bra[\Big]{\cos(\phi) \sinh(\xi_\phi) \sin(\theta) \cosh(\xi_\theta) + \sin(\phi) \cosh(\xi_\phi) \cos(\theta) \sinh(\xi_\theta)}\\
    z + i \xi_z &= \cos(\phi) \cosh(\xi_\phi) - i \sin(\phi) \sinh(\xi_\phi).
\end{align}
The formula for $\sqrt{\rho}$ in spherical coordinates is complicated. Since we will be simplifying our expressions by picking special values, it suffices to note
\begin{align}\label{eqn:IMPZETA}
    \xi_x^2 + \xi_y^2 + \xi_z^2 &= \cos^2(\phi) \sinh^2(\xi_\phi) \cosh^2(\xi_\theta) \\
    &\quad  + \sin^2(\phi)(\cosh^2(\xi_\phi)\sinh^2(\xi_\theta) + \sinh^2(\xi_\phi)).
		\end{align}
We also note that the analytically continued highest weight spherical harmonic is of the form
\begin{align} \label{eqn:CPXYLM}
(Y_N^N)^\C (\theta + i\xi_\theta, \phi + i \xi_\phi) = c_N \bra[\Big]{\sin(\phi) \cosh(\xi_\phi) + i \cos(\phi) \sinh(\xi_\phi)}^N e^{iN \theta}e^{-N \xi_\theta}.
\end{align}

To simplify \eqref{eqn:IMPZETA} and \eqref{eqn:CPXYLM}, we fix $\phi = \pi/2$ so that
\begin{align} \label{eqn:TUBE2}
  \xi_x^2 +\xi_y^2 + \xi_z^2 &= \cosh^2(\xi_\phi)\sinh^2(\xi_\theta) + \sinh^2(\xi_\phi),\\
    (Y_N^N)^\C (\theta + i\xi_\theta, i \xi_\phi) &= c_N \cosh^N(\xi_\phi) e^{iN \theta}e^{-N \xi_\theta} \quad (c_N \sim N^\frac{1}{4}).\label{eqn:CPXYLM2}
\end{align}
We additionally set $\tau = 1$, so that \eqref{eqn:TUBE1} and \eqref{eqn:TUBE2} imply the Grauert tube boundary is given by $\sinh^2(1) = \cosh^2(\xi_\phi)\sinh^2(\xi_\theta) + \sinh^2(\xi_\phi)$. Direct computation shows the equality is satisfied whenever
\begin{align}\label{eqn:XITHETA}
    -1 < \xi_\phi < 1 \quad \text{and} \quad \xi_\theta = -\sinh^{-1}\bra[\Big]{\operatorname{sech}(\xi_\phi)\p[\big]{\sinh^2(1) - \sinh^2(\xi_\phi)}^\frac{1}{2}}.
\end{align}
Note that $\xi_\theta < 0$, so
\begin{align}
    \norm{(Y_N^N)^\C}_{L^p}^p &= c_N^p\int_{\partial S^2_1} \abs{\cosh^N(\xi_\phi)}^{p}e^{-Np\xi_\theta} d\xi_\theta d\xi_\phi \\
    &\ge c_N^p \int_{-1}^1\abs{\cosh^N(\xi_\phi)}^{p}e^{N p  \sinh^{-1}\bra[\big]{\operatorname{sech}(\xi_\phi)\p[\big]{\sinh^2(1) - \sinh^2(\xi_\phi)}^\frac{1}{2}}} \, d\xi_\phi \\
    &\ge c_N^p \int_{-1}^1 \abs{\cosh^N(\xi_\phi)}^{p}\,d\xi_\phi \\
    &\ge c_N^p \int_0^1 e^{Np \xi_\phi}\,d\xi_\phi \\
    &\sim N^{\frac{p}{4}}\frac{e^{Np}}{Np}.
\end{align}
In the last line we used $c_N \sim N^{\frac{1}{4}}$. Combined with the universal asymptotics $\norm{(Y_N^N)^\C}_{L^2(\pmt)} = N^{-\frac{1}{4}}e^N (1 + O(N^{-\frac{1}{2}}))$ proved in \cite[Lemma~0.2]{Zelditch20Husimi}, we conclude
\begin{align}
    \frac{\norm{(Y_N^N)^\C}_{L^p(\partial S_1^2)}}{\norm{(Y_N^N)^\C}_{L^2(\partial S_1^2)}} \sim \frac{N^\frac{1}{4}N^{-\frac{1}{p}}e^N}{N^{-\frac{1}{4}}e^N } = N^{\frac{1}{2}- \frac{1}{p}},
\end{align}
showing that \autoref{prop:Approx Reproduce} is sharp.

\subsection{Complexified Gaussian beams as extremals: geometric explanation}
As mentioned earlier, in the real domain Gaussian beams only saturate the low \(L^p\) norms whereas zonal spherical harmonics saturate the high \(L^p\) norms. As in \cite{Zelditch20Husimi}, we give a heuristic symplectic geometry explanation for why complexifications of Gaussian beams are also extremals for high \(L^p\) norms on \(\partial S^2_{\tau}\).

Let \(P\) be the north pole and let \(\frac{\partial }{\partial \theta} \) denote the generator of rotation about the $z$-axis. The zonal spherical harmonics denoted \(Y^{0}_N\) are semiclassical Lagrangian distributions associated to 
\begin{align}
  \Lambda_P = \{g^{t}(S^{*}_P S^2) : t \in \R\} \label{Lagrangian Torus}
\end{align}
Under the natural projection \(S^{*} S^2 \to S^2\)  there is a blowdown singularity at \(P\),  which leads to peaking of sup norms. This is a heuristic explanation for the zonal harmonics saturating high $L^p$ norms in the real domain.

Now let \(E\) denote the equator. The  Gaussian beams \(Y^{N}_N\) associated to \(E\) have wavefront set
\begin{align}
  \Gamma = \{g^{t} ( p, d\theta ): p \in E , \ t \in \R\} 
\end{align}
Th symplectic cone \(\Sigma_{\tau} = ( \pmt, \R^{ +}d \alpha ) \cong  ( S^{*}_{\tau}M, \R^{ +}d\xi )\) from \eqref{eqn:SIGMATAU} is the phase space of the Grauert tube boundary. Under the identification \eqref{eqn:IOTA}, \(\Lambda_{P}\) is a Lagrangian submanifold embedded in \(\pmt\) and no blowdown singularities occur, suggesting that zonal harmonics are longer extremals in the complex domain. Instead, the geodesic flow and the lift of rotations to \(S^{*}S^2\) coincide on \(\Gamma\), and \(\Gamma\) is a singular leaf of the foliations of \(\pmt\) generated by the geodesic flow together with rotations. This singularity suggests that Gaussian extremizes \(L^{p}\) norms in the complex domain.

\bibliographystyle{plain}
\bibliography{References}
\end{document}